\newtheorem{definition}{Definition}[section]
\newtheorem{lemma}[definition]{Lemma}
\newtheorem{theorem}[definition]{Theorem}
\newtheorem{proposition}[definition]{Proposition}
\newtheorem{example}[definition]{Example}
\newtheorem{remark}[definition]{Remark}
\newtheorem{corollary}[definition]{Corollary}
\newtheorem{open problem}[definition]{Open problem}
\newproof{proof}{\textbf{Proof}}
\journal{}
\begin{document}

\begin{frontmatter}



\title{\textbf{On profiniteness and Hausdorffness of topological residuated lattices}}


\author{Jiang Yang$^{a}$, Pengfei He$^{b,\ast}$, Juntao Wang$^{c}$ }
\cortext[cor1]{Corresponding author. \\
Email addresses: yangjiangdy@126.com, hepengf1986@126.com, wjt@xsyu.edu.cn }

\address[A]{School of Mathematics, Northwest University, Xi'an, 710127, China}
\address[B]{School of Mathematics and Information Science, Shaanxi Normal University, Xi'an, 710119, China}
\address[C]{School of Science, Xi'an Shiyou University, Xi'an, 710065, China}

\begin{abstract}

The aim of this paper is to study the profiniteness of compact topological residuated lattices and the existence of Hausdorff topological residuated lattices. Firstly, we study profinite residuated lattices and obtain sufficient and necessary conditions for profiniteness in compact topological residuated lattices. These conditions include topological and algebraic characterizations. Moreover, it order to study the existence of Hausdorf topological residuated lattices, we investigate finiteness conditions in residuated lattices. Finally, we investigate linear topological residuated lattices and give the class of residuated lattices that can be endowed with a non-trivial Hausdorff topology.

\end{abstract}

\begin{keyword}  topological algebra; profiniteness; finiteness condition; Hausdorffness


\end{keyword}

\end{frontmatter}

\section{Introduction}
\label{intro}

Recently, algebraic methods have been applied successfully to non-classical logics via universal algebra and algebraic logic. While Boolean algebras are algebraic semantics for two-valued logics, multiple-valued logic algebras serve as algebraic semantics for non-classical logics. Among various multiple-valued logic algebras, residuted lattices are very basic and important algebraic structures. In fact, residuated lattices are structures that have been studied by algebraists since the 1930s in \cite{Ward1}, they include structures such as lattices of ideals of a ring, lattice ordered groups, algebras of relations. But the study has been revived recently as a study of mathematical structures for substructural logics \cite{Galatos}, that is, logics which lack some of the three structural rules, namely contraction, weakening, and exchange. The variety of residuated lattices is the equivalent algebraic semantics, in the sense of Blok-Pigozzi \cite{Blok}, of the Full Lambek Calculus $FL$. The variety of bounded commutative and integral residuated lattices, that is, the equivalent algebraic semantics of $FL_{ew}$ , the calculus that results from $FL$ by adding two structural rules: exchange and weakening. Important subvarieties of bounded commutative and integral residuated lattices are Boolean algebras, Heyting algebras, $MTL$-algebras, $BL$-algebras, $MV$-algebras.


Topology and algebra, two fundamental domains of mathematics, play complementary roles. When an algebraic structure and a topology come naturally together, the rules that describe the relationship between topology and
algebraic operation are almost always transparent, and naturally the operation has to be continuous or semicontinuous. In recent decades, there has been a lot of research \cite{Borzooei,Borzooei1,Luan,Luan1,Najafi,Yan} on algebras endowed with a topology, the resulting structures are
called topological algebras. The most important topologies on algebras to be considered are linear topologies. For example, it has been noted by G. Birkhoff (\cite{Birkhoof} pp.52-54) that under certain conditions a family of subgroups may be used to definite neighborhoods of the identity in a group and thus introduce a topology. Such
``subgroup topologies" called linear topologies were introduced and their main properties follow from \cite{Dimitric,Fuchs,Hull} treatment of topological algebras as uniform structures. Moreover, Zahiri
and Borzooei \cite{Zahiri} used a special family of filters on a $BL$-algebra $L$ to construct a topology $\tau$ on $L$. This topology
is not only linear but also compatible with all operations of $L$. In \cite{Yang1}, Yang et al. obtained that for each filter $F$ of a residuated lattice $L$, the topology $\mathcal{T}_{F}=\{U\subseteq L: \forall x\in U, x/F\subseteq U\}$ is
not only linear but also Alexandrov, furthermore, $(L,\mathcal{T}_{F})$ is a topological residuated lattice. Also, a question of
interest arises in \cite{Yang1}, that is, whether the sets $\mathcal{F}(L)$ and $LTRL(L)$ are equipotent, where the set $\mathcal{F}(L)$ of all filters of $L$ and the set $LTRL(L)$ of all linear topological residuated lattices with the underlying set $L$? In \cite{Yang}, we prove that $|\mathcal{F}(L)|=|ZLTRL(L)|$ for a residuated lattice $L$, where the set
$ZLTRL(L)$ of all zero-dimensional linear topological residuated lattices with the underlying set $L$. We will correct this result in this paper. Moreover, a perfectly valid question is whether an algebra can in
fact be endowed with a Hausdorff, non-discrete (linear) topology. This question has
been settled in infinite Abelian groups, see Theorem 1.5 in \cite{Dimitric} or
Theorem 7.7 in \cite{Fuchs}. In this paper, we shall discuss the same question for residuated
lattices, that is, the existence of Hausdorff topological residuated lattices.


It is well known that an algebra is profinite if it is representable as a projective limit of finite algebras. Profinite algebras and profinite completions have their origins in algebraic number theory and Galois theory.
Profiniteness is a property referring to the interplay between the topological and the algebraic structure
of a topological algebra, whereas for topological groups, rings, semigroups, and distributive lattices(see for instance \cite{Johnstone}, Sec.VI.2), profiniteness turns out to be a purely topological property as it
is equivalent to the underlying topological space being a Stone space (i.e., the topology is compact, Hausdorff, and zero-dimensional).  N. Nikolov and D. Segal \cite{Nikolov,Nikolov1} have proved that the topological
structure of a finitely generated profinite group is completely determined by its algebraic structure; more precisely, they proved that the subgroups of finite index of such a group are precisely its open subgroups.
This result is generalized to many algebras of logic which
are our main focus, including Heyting
algebras \cite{Bezhanishvili}, $MV$-algebras \cite{Nganou}, orthomodular lattices \cite{Choe}. To the best of our knowledge, up to now, profinite residuated lattices have received very little attention. In this paper, we investigate profinite residuated lattices. In particular, we give characterizations of the profiniteness of compact topological residuated lattices.



\section{Preliminaries}
In this section, we summarize some basic definitions and results, which will be used.


\begin{definition} \label{2.1a}
\emph{\cite{Hohle} An algebraic structure $\mathcal{L}=(L,\wedge,\vee,\odot,\rightarrow,0,1)$ of type $(2,2,2,\\2,0,0)$ is called a \emph{residuated lattice} if it satisfies the following conditions:
\begin{itemize}
 \item[\rm (1)] $(L,\wedge,\vee,0,1)$ is a bounded lattice;
 \item[\rm (2)] $(L,\odot,1)$ is a commutative monoid;
 \item[\rm (3)] $x \odot y \leq z$ if and only if $x \leq y \rightarrow z$, for all $x,y,z \in L$, where $\leq$ is the partial order of the lattice $(L,\wedge,\vee, 0, 1)$.
 \end{itemize}}
\end{definition}

Throughout this paper we will slightly abuse notation $L$ the universe of a residuated lattice $\mathcal{L}=(L,\wedge,\vee,\odot,\rightarrow,0,1)$, when there is no chance to confusion.\\


For convenience of readers, we provide some basic properties of residuated lattices in the following proposition.
\begin{proposition} \label{2.5a}\cite{Turunen}
In any residuated lattice $(L,\wedge,\vee,\odot,\rightarrow,0,1)$, the following properties hold: for any $x, y, z \in L$,
\begin{enumerate}
  \item[\rm$(R_{1})$] $1\rightarrow x=x$, $x \rightarrow 1 = 1$;
  \item[\rm$(R_{2})$] $x \leq y$ if and only if $x \rightarrow y =1$;
  \item[\rm$(R_{3})$] if $x \leq y$, then $y \rightarrow z \leq x \rightarrow z$, $z \rightarrow x \leq z \rightarrow y$ and
              $x \odot z \leq y \odot z$;
  \item[\rm$(R_{4})$] $x \odot (x \rightarrow y) \leq y$;
  \item[\rm$(R_{5})$] $x \odot y \leq x \wedge y$, $x \leq y \rightarrow x$;
  \item[\rm$(R_{6})$] $x \rightarrow (y \rightarrow z)=(x \odot y) \rightarrow z = y \rightarrow (x \rightarrow z)$;
  \item[\rm$(R_{7})$] $x\vee(y\odot z)\geq (x\vee y)\odot(x\vee z)$, hence $x^{m}\vee y^{n}\geq(x\vee y)^{mn}$.
\end{enumerate}
\end{proposition}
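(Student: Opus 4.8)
The plan is to derive every item from the single residuation equivalence in Definition~\ref{2.1a}(3), namely $x\odot y\leq z\iff x\leq y\rightarrow z$, together with the commutative monoid axioms $x\odot 1=x$ and the associativity and commutativity of $\odot$. Before handling the listed identities I would first record two auxiliary facts that the adjunction yields immediately and that several items silently use: (i) $\odot$ is isotone in each argument, and (ii) $-\odot c$ preserves binary joins, i.e. $(a\vee b)\odot c=(a\odot c)\vee(b\odot c)$. Fact (i) follows by noting that $b\odot c\leq b\odot c$ gives $b\leq c\rightarrow(b\odot c)$, so $a\leq b$ forces $a\leq c\rightarrow(b\odot c)$, whence $a\odot c\leq b\odot c$; commutativity then gives isotonicity in the other slot. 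Fact (ii) is the standard statement that a left adjoint preserves joins: since $-\odot c$ has the right adjoint $c\rightarrow-$, it carries the join $a\vee b$ to the join of the images.

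With these in hand, items $(R_{1})$--$(R_{6})$ are each a one-line application of the adjunction. For $(R_{1})$ I would use $a\leq 1\rightarrow x\iff a\odot1\leq x\iff a\leq x$ and then substitute $a=1\rightarrow x$ and $a=x$ to get $1\rightarrow x=x$, while $x\rightarrow1=1$ holds because $a\odot x\leq1$ is automatic. $(R_{2})$ is $x\rightarrow y=1\iff 1\leq x\rightarrow y\iff x\leq y$. For $(R_{4})$, applying the adjunction to $x\rightarrow y\leq x\rightarrow y$ yields $(x\rightarrow y)\odot x\leq y$. The three inequalities of $(R_{3})$ then follow: the first from $(R_{4})$ and isotonicity, the second again from $(R_{4})$ plus the adjunction, and the third is exactly Fact (i). $(R_{5})$ comes from $x\leq1$, giving $x\odot y\leq1\odot y=y$ and symmetrically $x\odot y\leq x$, hence $x\odot y\leq x\wedge y$; the companion $x\leq y\rightarrow x$ is the adjoint form of $x\odot y\leq x$. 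Finally $(R_{6})$ is the currying law: for every $a$ one chains $a\leq x\rightarrow(y\rightarrow z)\iff a\odot x\leq y\rightarrow z\iff(a\odot x)\odot y\leq z\iff a\odot(x\odot y)\leq z\iff a\leq(x\odot y)\rightarrow z$ using associativity, and the symmetric chain through $y\odot x$ gives the third expression.

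For $(R_{7})$ the first inequality uses Fact (ii): expanding $(x\vee y)\odot(x\vee z)=(x\odot x)\vee(x\odot z)\vee(y\odot x)\vee(y\odot z)$ and observing that each of the first three summands is $\leq x$ (since $x\leq1$ makes $x\odot x$, $x\odot z$ and $y\odot x$ all $\leq x$) shows the join is $\leq x\vee(y\odot z)$. The ``hence'' clause I would prove by fully expanding $(x\vee y)^{mn}$ via repeated distributivity into a join of products, each of which is, after commuting factors, of the form $x^{i}\odot y^{mn-i}$ with $0\leq i\leq mn$. A pigeonhole estimate on the exponent then finishes it: if $i\geq m$ the product is $\leq x^{i}\leq x^{m}$, while if $i<m$ then $mn-i\geq mn-(m-1)=m(n-1)+1\geq n$, so the product is $\leq y^{mn-i}\leq y^{n}$; in either case the summand is $\leq x^{m}\vee y^{n}$, and taking the join gives $(x\vee y)^{mn}\leq x^{m}\vee y^{n}$.

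I expect the only genuine obstacle to be the bookkeeping in the ``hence'' clause of $(R_{7})$: one must be careful that distributivity legitimately turns the $mn$-fold power into a join over all factor-choices, and that the elementary exponent inequality $m(n-1)+1\geq n$ (valid for $m,n\geq1$) correctly forces the pigeonhole split. Everything else is mechanical once the adjunction and the two auxiliary facts are on the table.
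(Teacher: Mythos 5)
Your proposal is correct, and there is nothing in the paper to diverge from: Proposition~\ref{2.5a} is imported verbatim from \cite{Turunen} without proof, so the derivation must stand on its own, which yours does. You work entirely from the adjunction $x\odot y\leq z\iff x\leq y\rightarrow z$, and the two preliminary facts (isotonicity of $\odot$, and $-\odot c$ preserving joins because it is a left adjoint) are exactly the right scaffolding; each of $(R_{1})$--$(R_{6})$ then goes through as you say, with the implicit uses of integrality ($x\leq 1$, which here follows from $1$ being both the lattice top and the monoid unit) all legitimate under Definition~\ref{2.1a}. The one place where you take a genuinely different route from the standard textbook treatment is the ``hence'' clause of $(R_{7})$: the usual argument (as in Turunen or Galatos et al.) iterates the first inequality by induction to get $x\vee(y_{1}\odot\cdots\odot y_{k})\geq(x\vee y_{1})\odot\cdots\odot(x\vee y_{k})$, whence $x^{m}\vee y^{n}\geq(x^{m}\vee y)^{n}\geq\bigl((x\vee y)^{m}\bigr)^{n}=(x\vee y)^{mn}$, whereas you fully expand $(x\vee y)^{mn}$ by distributivity into a join of terms $x^{i}\odot y^{mn-i}$ and split by pigeonhole on $i\geq m$ versus $i<m$ using $mn-(m-1)=m(n-1)+1\geq n$. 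Your expansion is valid (distributivity is your Fact (ii), and integrality makes powers antitone, so $x^{i}\leq x^{m}$ for $i\geq m$), and it has the minor virtue of making the exponent arithmetic explicit; the inductive route is shorter and avoids the $2^{mn}$-term bookkeeping, but nothing in your version fails.
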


\begin{definition} \label{2.6a}\emph{\cite{Turunen} Let $(L,\wedge,\vee,\odot,\rightarrow,0,1)$ be a residuated lattice. A \emph{filter} is a nonempty set $F\subseteq L$ such that for each $x, y\in L$,
\begin{enumerate}
\item[\rm(i)] $x,y\in F$ implies $x\odot y\in F$,
\item[\rm(ii)] if $x\in F$ and $x\leq y$, then $y\in F$.
\end{enumerate}}
\end{definition}

Note that in a residuated lattice $L$, a filter $F$ of $L$ is equivalent to a \emph{deductive system}, that is, $F$ satisfies the following conditions: (i) $1\in F$, and (ii) $x,~x\rightarrow y\in F$ implies $y\in F$.

Let $L$ be a residuated lattice. By $\mathcal{F}(L)$($Con(L)$), we mean the set of all filters (congruences) of $L$. There is close correspondence between congruences and filters of residuated lattices. For each congruence
$\theta$ on the residuated lattice $L$, let $[1]_{\theta}=\{x\in L:(1,x)\in \theta\}$. Then $[1]_{\theta}$ is a filter of $L$, called the filter \emph{determined} by a congruence $\theta$. Conversely, for each filter $F$,
$\theta_{F}=\{(x,y)\in L\times L:(x\rightarrow y)\odot(y\rightarrow x)\in F\}$ is a congruence on $L$, called the congruence \emph{determined} by a filter $F$. For any $x\in L$, let $x/F$ be the equivalence class $x/\theta_{F}$. If we denote by $L/F$ the quotient set $L/\theta_{F}$, then $L/F$ becomes a residuated lattice with the operations induced from those of $L$. Moreover, the following result holds.
\begin{theorem}\label{4.1}\cite{Galatos} For every residuated lattice $L$, we have $$\mathcal{F}(L)\cong Con(L).$$
\end{theorem}

We recall that the notions of \emph{uniform structures} and \emph{uniform topologies}. We shall use the following notation. \\

Let $X$ be a set. We denote by $\Delta_{X}$ the diagonal
in $X\times X$, namely $\Delta_{X}=\{(x,x):x\in X\}\subset X\times X$. Suppose that $R$ is a subset of $X\times X$(in other words, $R$ is a binary relation on $X$). For any $y\in X$, we define the set
$R[y]=\{x\in X:(x,y)\in R\}$. The \emph{inverse} $R^{-1}\subset X\times X$ of $R$ is defined by $R^{-1}=\{(x,y):(y,x)\in R\}$. One says that $R$ is symmetric if it satisfies $R^{-1}=R$. If $R$ and $S$ are subsets of
$X\times X$, we define their \emph{composite} $R\circ S\subset X\times X$ by $R\circ S=\{(x,y): \exists z\in X s.t.~(x,z)\in R~ and~ (z,y)\in S\}$.

\begin{definition}\label{2.5}\cite{Jam} Let $X$ be a set. A \emph{uniform structure} on $X$ is a non-empty set $\mathcal{U}$ of subsets of $X\times X$ satisfying the following conditions:
\begin{itemize}
 \item[\rm (1)] if $V\in \mathcal{U}$, then $\Delta_{X}\subset V$;
 \item[\rm (2)] if $V\in\mathcal{U}$ and $V\subset V'\subset X\times X$, then $V'\in \mathcal{U}$;
 \item[\rm (3)] if $V, W\in\mathcal{U}$, then $V\cap W\in\mathcal{U}$;
 \item[\rm (4)] if $V\in\mathcal{U}$, then $V^{-1}\in\mathcal{U}$;
 \item[\rm (5)] if $V\in \mathcal{U}$, then there exists $W\in \mathcal{U}$ such that $W\circ W\subset V$.
\end{itemize}

\emph{A set $X$ equipped with a uniform structure $\mathcal{U}$ is called a \emph{uniform space} and the elements of $\mathcal{U}$ are called the \emph{entourages} of $X$. Let $X$ be a uniform space. One easily
verifies that it is possible to define a topology on $X$ by taking as open sets the subsets $\Omega\subset X$ which satisfy the following property: for each $x\in\Omega$, there exists an entourage $V\subset X\times X$
such that $V[x]=\Omega$. One says that this topology is the topology \emph{associated} with the uniform structure on $X$. A subset $N\subset X$ is neighborhood of a point $x\in X$ for this topology if and only if
there exists an entourage $V$ such that $N=V[x]$. This topology is Hausdorff if and only if the intersection of the entourages of $X$ coincides with the diagonal $\Delta_{X}\subset X\times X$.}
\end{definition}

\begin{proposition}\label{2.6}\cite{Jam} Let $X$ be a set and let $\mathcal{B}$ be a nonempty set of subsets of $X\times X$. Then $\mathcal{B}$ is a base for some (necessarily unique) uniform structure on $X$ if and only
if it satisfies the following properties:
\begin{itemize}
 \item[\rm (a)] if $V\in \mathcal{B}$, then $\Delta_{X}\subset V$;
 \item[\rm (b)] if $V\in\mathcal{B}$ and $W\in\mathcal{B}$, then there exists $U\in\mathcal{B}$ such that $U\subset V\cap W$;
 \item[\rm (c)] if $V\in\mathcal{U}$, then there exists $W\in\mathcal{B}$ such that $W\subset V^{-1}$;
 \item[\rm (d)] if $V\in\mathcal{U}$, then there exists $W\in\mathcal{B}$ such that $W\circ W\subset V$.
\end{itemize}
\end{proposition}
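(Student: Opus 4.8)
The plan is to prove both implications using the explicit description of the uniform structure generated by a base, namely the upward closure $\widehat{\mathcal{B}}=\{V'\subseteq X\times X:\ V\subseteq V'\text{ for some }V\in\mathcal{B}\}$. Saying that $\mathcal{B}$ is a base for a uniform structure $\mathcal{U}$ means precisely that $\mathcal{U}=\widehat{\mathcal{B}}$, so uniqueness will be automatic once existence is settled: the structure is forced to equal $\widehat{\mathcal{B}}$. I read conditions (c) and (d) as quantifying over $V\in\mathcal{B}$, which is what the argument needs. Throughout I will invoke two elementary monotonicity facts: $R\subseteq R'$ implies $R^{-1}\subseteq(R')^{-1}$, and $R\subseteq R'$, $S\subseteq S'$ together imply $R\circ S\subseteq R'\circ S'$.

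For the forward direction I would assume $\mathcal{B}$ is a base for a uniform structure $\mathcal{U}$, so that $\mathcal{B}\subseteq\mathcal{U}$ and every member of $\mathcal{U}$ contains a member of $\mathcal{B}$. Each of (a)--(d) then falls out of the corresponding axiom of Definition~\ref{2.5} applied to a member of $\mathcal{B}\subseteq\mathcal{U}$: (a) is axiom (1); for (b) I would note $V\cap W\in\mathcal{U}$ by axiom (3) and extract $U\in\mathcal{B}$ with $U\subseteq V\cap W$ from the base property; for (c), $V^{-1}\in\mathcal{U}$ by axiom (4) furnishes $W\in\mathcal{B}$ with $W\subseteq V^{-1}$; and for (d), axiom (5) gives $W'\in\mathcal{U}$ with $W'\circ W'\subseteq V$, after which the base property supplies $W\in\mathcal{B}$ with $W\subseteq W'$, whence $W\circ W\subseteq W'\circ W'\subseteq V$ by monotonicity.

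For the converse I would assume (a)--(d), set $\mathcal{U}=\widehat{\mathcal{B}}$ (non-empty since $\mathcal{B}\neq\emptyset$), and verify the five axioms of Definition~\ref{2.5} directly. Axiom (2) is immediate, as $\widehat{\mathcal{B}}$ is upward closed by construction. Given $V'\in\mathcal{U}$, I would fix $V\in\mathcal{B}$ with $V\subseteq V'$: axiom (1) comes from (a) via $\Delta_X\subseteq V\subseteq V'$; axiom (4) comes from (c) and monotonicity of the inverse, since the $W$ it provides satisfies $W\subseteq V^{-1}\subseteq(V')^{-1}$; and axiom (5) comes from (d), since the $W\in\mathcal{B}\subseteq\mathcal{U}$ it provides satisfies $W\circ W\subseteq V\subseteq V'$. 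For axiom (3), given $V',W'\in\mathcal{U}$ with $V\subseteq V'$ and $W\subseteq W'$ in $\mathcal{B}$, property (b) yields $U\in\mathcal{B}$ with $U\subseteq V\cap W\subseteq V'\cap W'$, so $V'\cap W'\in\mathcal{U}$. That $\mathcal{B}$ is a base for this $\mathcal{U}$ holds by the very definition of $\widehat{\mathcal{B}}$.

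I do not expect any step to be genuinely hard, since this is a routine characterization. The one place to take care is axiom (5) in the converse: it asks only for the \emph{existence} of some $W\in\mathcal{U}$ with $W\circ W\subseteq V'$, so the halving condition (d)---which already produces such a $W$ inside $\mathcal{B}$---suffices, and it must not be confused with a (false) requirement that every entourage be halvable within $\mathcal{B}$. Keeping the monotonicity of $\circ$ and $(\cdot)^{-1}$ explicit is exactly what lets the argument pass cleanly from members of $\mathcal{B}$ to their supersets in $\widehat{\mathcal{B}}$.
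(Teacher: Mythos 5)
Your proof is correct, and there is nothing in the paper to compare it against: Proposition~\ref{2.6} is stated without proof, being quoted from the uniform-spaces literature \cite{Jam}, and your argument via the upward closure $\widehat{\mathcal{B}}$ is exactly the standard one, with uniqueness correctly obtained from the forced identity $\mathcal{U}=\widehat{\mathcal{B}}$. You were also right to read the ``$V\in\mathcal{U}$'' in conditions (c) and (d) as a typo for ``$V\in\mathcal{B}$''; as literally printed the statement would be circular (no $\mathcal{U}$ has yet been introduced), and your reading is the one under which the equivalence holds and under which the paper later applies the proposition to families of congruences.
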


Given two congruences $\theta_{1},\theta_{2}$ on an algebra $A$. If $\theta_{1}\circ \theta_{2}=\theta_{2}\circ \theta_{1}$,
then we say $\theta_{1}$ and $\theta_{2}$ are \emph{permutable}, or $\theta_{1}$ and $\theta_{2}$ \emph{permute}. An algebra $A$ is \emph{congruence permutable} if
every pair of congruences on $A$ permutes. A class $\mathcal{K}$ of algebras is \emph{congruence permutable} iff every algebra in $\mathcal{K}$ has congruence permutable, see \cite{Burris}.\\

In what follows, we give a general result about topological algebras.\\

A \emph{topological (universal) algebra} is an object $\mathbf{A}=(A,(f_{\alpha})_{\alpha\in I},\tau)$ where $A$ is a set, $(f_{\alpha})_{\alpha\in I}$ a family of maps $f_{\alpha}:X^{n_{\alpha}}\rightarrow X$, $n_{\alpha}$
the
\emph{arity} of the operation $f_{\alpha}$, and $\tau$ a topology on $A$ such that each $f_{\alpha}$ maps the product space $(A,\tau)^{n_{\alpha}}$ continuously to the space $(A,\tau)$. $(A,(f_{\alpha})_{\alpha\in I})$ is
called the \emph{underlying algebra}, and $(X,\tau)$ the \emph{underlying space}.\\

By Proposition \ref{2.6}, we have the following result, which is of great significance to study the topological algebras.

\begin{theorem}For a congruence permutable algebra $\mathbf{A}=(A,(f_{\alpha})_{\alpha\in I})$, if $(\mathscr{C},\subseteq)$ is a down-directed set, then $(A,(f_{\alpha})_{\alpha\in I},\tau)$ is a topological
algebra, where $\mathscr{C}\subseteq Con A$ and $\tau$ is a topology induced by $\mathscr{C}$.
\end{theorem}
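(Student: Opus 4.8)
The plan is to read Proposition~\ref{2.6} as the engine of the proof: I would show that the family $\mathscr{C}$, viewed as a collection of subsets of $A\times A$, is a base for a uniform structure on $A$ whose associated topology (in the sense of Definition~\ref{2.5}) is exactly $\tau$, and then check that every $f_{\alpha}$ is continuous. The whole argument rests on the fact that each $\theta\in\mathscr{C}$ plays a double role: as a subset $\theta\subseteq A\times A$ it is a candidate entourage, while as a congruence it is an equivalence relation compatible with all operations. These two readings are precisely what let the topology and the algebra interact.

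First I would verify conditions (a)--(d) of Proposition~\ref{2.6} for $\mathscr{C}$. Condition (a) is reflexivity of congruences, $\Delta_{A}\subseteq\theta$. Condition (b) is exactly the down-directedness of $(\mathscr{C},\subseteq)$: given $\theta_{1},\theta_{2}\in\mathscr{C}$, there is $\theta_{3}\in\mathscr{C}$ with $\theta_{3}\subseteq\theta_{1}\cap\theta_{2}$. Condition (c) is symmetry, $\theta^{-1}=\theta$, so $W=\theta$ serves. Condition (d) uses transitivity: for an equivalence relation one has $\theta\circ\theta=\theta$, so $W=\theta$ gives $W\circ W\subseteq\theta$. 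This is the natural place for congruence permutability to enter, since it guarantees that any composite $\theta_{1}\circ\theta_{2}$ of members of $\mathscr{C}$ is again symmetric and in fact equals $\theta_{1}\vee\theta_{2}\in Con\,A$, so that closing $\mathscr{C}$ under composition keeps every entourage a congruence. With (a)--(d) in hand, $\mathscr{C}$ is a base for a unique uniform structure, and $\tau$ is the topology it induces, in which a basic neighbourhood of $x$ is the section $\theta[x]=x/\theta$.

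Next I would establish continuity of an arbitrary operation $f_{\alpha}$ of arity $n=n_{\alpha}$. Fixing $(a_{1},\dots,a_{n})\in A^{n}$ and writing $b=f_{\alpha}(a_{1},\dots,a_{n})$, I take an arbitrary basic neighbourhood $b/\theta$ of $b$, and then the product neighbourhood $(a_{1}/\theta)\times\cdots\times(a_{n}/\theta)$ of $(a_{1},\dots,a_{n})$. If $(x_{1},\dots,x_{n})$ belongs to it, then $(x_{i},a_{i})\in\theta$ for each $i$, and compatibility of the congruence $\theta$ with $f_{\alpha}$ yields $(f_{\alpha}(x_{1},\dots,x_{n}),b)\in\theta$, i.e. $f_{\alpha}(x_{1},\dots,x_{n})\in b/\theta$. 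Thus $f_{\alpha}$ carries this product neighbourhood into $b/\theta$, proving continuity at the chosen point; as the point was arbitrary, $f_{\alpha}$ is continuous, and $(A,(f_{\alpha})_{\alpha\in I},\tau)$ is a topological algebra.

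I do not expect a serious obstacle, but the delicate bookkeeping is the matching of the product topology on $A^{n_{\alpha}}$ with the single-congruence neighbourhoods: a priori one should allow a different congruence $\theta_{i}$ in each coordinate, and down-directedness is what lets me replace the finite family $\theta_{1},\dots,\theta_{n}$ by one common refinement $\theta\in\mathscr{C}$ and argue inside a single class. The other point to state carefully is that the topology associated with this uniform structure really is the $\tau$ induced by $\mathscr{C}$. In this scheme, compatibility of congruences with the operations does the decisive work for continuity, down-directedness supplies condition (b) of Proposition~\ref{2.6}, and permutability only ensures that composites of congruences remain congruences.
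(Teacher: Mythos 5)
Your proposal is correct and takes essentially the same route as the paper: both read Proposition~2.6 as the engine, checking that $\mathscr{C}$ (reflexive by $\Delta_{A}\subseteq\theta$, symmetric, down-directed for condition (b)) is a base for a uniform structure whose associated topology is $\tau$, and both obtain continuity of each $f_{\alpha}$ from the compatibility of congruences with the operations, i.e. $f_{\alpha}([a_{1}]_{\theta},\dots,[a_{n}]_{\theta})\subseteq[f_{\alpha}(a_{1},\dots,a_{n})]_{\theta}$. Two minor divergences are worth noting: the paper attributes condition (d) to congruence permutability, whereas you correctly derive it from transitivity alone ($\theta\circ\theta=\theta$ for any equivalence relation, so $W=\theta$ works), which shows the permutability hypothesis is not actually used once $\mathscr{C}$ is down-directed; and for continuity the paper factors through the single-congruence Alexandrov topologies $\tau_{\theta}=\{U\subseteq A:\forall a\in U,\ [a]_{\theta}\subseteq U\}$ and concludes via $\tau=\sup\{\tau_{\theta}:\theta\in\mathscr{C}\}$, while you argue pointwise with a single common congruence in each coordinate — the two arguments are interchangeable, and your pointwise version even makes the down-directedness remark about combining $\theta_{1},\dots,\theta_{n}$ dispensable, since one may take the same $\theta$ in every factor from the start.
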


Clearly, $\mathscr{C}$ satisfies the conditions ($a$) and ($c$) of Proposition \ref{2.6}. The hypothesis that $(\mathscr{C},\subseteq)$ is a down-directed set corresponds to the condition ($b$) of Proposition \ref{2.6}, and the
congruence permutability implies the condition ($d$) of Proposition \ref{2.6}. Thus, by Proposition \ref{2.6}, $\mathscr{C}$ is a base for the uniform structure on $A$ and $\tau$ is a topology associated with this uniform
structure on $A$. Note that $$f_{\alpha}([a]_{\theta},[b]_{\theta})\subseteq [f_{\alpha}(a,b)]_{\theta},$$
for any $a,b\in A$ and $\theta\in\mathscr{C}$. It follows that $(A,(f_{\alpha})_{\alpha\in I},\tau_{\theta})$ is a topological algebra, where $$\tau_{\theta}=\{U\subseteq A: \forall a\in U, [a]_{\theta}\subseteq U\}.$$
Since $\tau=\sup\{\tau_{\theta}:\theta\in\mathscr{C}\}$, we deduce that $(A,(f_{\alpha})_{\alpha\in I},\tau)$ is a topological algebra. \\

The variety $\mathcal{RL}$ of residuated lattices is arithmetical. It has the congruence extension property (CEP),
and is congruence 1-regular, i.e., for any congruence $\theta$, the coset of 1 uniquely determines
$\theta$. Furthermore, the congruences of a residuated lattice are completely
determined by filters (see Theorem 2.4). Thus, in \cite{Yang1}, we studied topological residuated lattices induced by a system of filters. Since the variety $\mathcal{RL}$ is congruence permutable,
it follows from Theorem 2.7 that a residuated lattice endowed with the topology induced by a system of filters is a topological residuated lattice, see Theorem 3.13 in \cite{Yang1}.\\

A topological space $(X,\mathcal{T})$ is a \emph{zero-dimensional space} if $\mathcal{T}$ has a clopen base.
Let $\tau$ and $\tau'$ be two topologies on a given set $X$. If $\tau\subseteq\tau'$, then we say that $\tau'$ \emph{finer than} $\tau$.

\begin{definition}\label{2.1}\emph{\cite{Yang1} Let $\mathcal{F}$ be a family of filters of a residuated lattice $L$. Then $\mathcal{F}$ called a \emph{system of filters} of $L$ if $(\mathcal{F},\subseteq)$ is
a down-directed set.}
\end{definition}

 A topology $\tau$ of a residuated lattice $L$ is called a \emph{linear topology} on $L$ if there exists a base $\beta$ for $\tau$ such that $B$ is a filter of $L$, for any element $B$ of $\beta$ containing 1.

If $(\mathcal{F},\subseteq)$ is a system of filters of a residuated lattice $L$, then, by Theorem 2.7 or Theorem 3.13 in \cite{Yang1}, $$\mathcal{T}_{\mathcal{F}}=\{U\subseteq L:\forall x\in U,\exists F\in\mathcal{F}~s.t.,x/F\subseteq U\}$$
is a linear topology on $L$. We call it a \emph{linear topology induced by $\mathcal{F}$}. Also, $(L,\mathcal{T}_{\mathcal{F}})$ is a topological residuated lattice.


\section{Profinite residuated lattices}

In this section, we study profinite residuated lattices and give several characterizations of profiniteness in compact topological residuated lattices.\\

Let $I=(I,\leq)$ denote a \emph{directed partially set} or \emph{directed poset}, that is, $I$ is a set with a binary relation $\leq$ such that $(I,\leq)$ is a poset and if $i,j\in I$, there exists some $k\in I$ such that
$i,j\leq k$.
\begin{definition}\label{3.1}\emph{\cite{Gratzer1}
(i) By an \emph{inverse or projective system} in a category $\mathcal{D}$ we mean a family $\{B_{i},\pi_{ij},I\}$ of objects, indexed by a directed poset $I$, with a family of morphisms $\pi_{ij}: B_{i}\rightarrow B_{j}$, for any $j\leq i$, satisfying the following conditions:
\begin{itemize}
\item[\rm (1)] $\pi_{ik}=\pi_{jk}\circ \pi_{ij}$, for any $k\leq j\leq i$;
\item[\rm (2)] $\pi_{ii}=id_{B_{i}}$, for any $i\in I$.
\end{itemize}
For brevity we say that $\{B_{i},\pi_{ij},I\}$ is an inverse system in $\mathcal{D}$.}

\emph{(ii) The \emph{inverse limit} of an inverse system $\{B_{i},\pi_{ij},I\}$ in a category $\mathcal{D}$ is an object $B$ of $\mathcal{D}$ together with a family $\{\phi_{i}:B\rightarrow B_{i}\}_{i\in I}$
of morphisms (which is often denoted by $\{B,\phi_{i}\}_{i\in I}$) satisfying the conditions:
\begin{itemize}
\item[\rm (1)] $\pi_{ij}\circ\phi_{i}=\phi_{j}$, for any $i,j\in I, j\leq i$(this condition often is called \emph{compatible condition});
\item[\rm (2)] for any object $B'$ of $\mathcal{D}$, together with a family of morphisms $\lambda_{i}: B'\rightarrow B, i\in I$, if $\pi_{ij}\circ\lambda_{i}=\lambda_{j}$, for any $i,j\in I, j\leq i$ then there exists a unique morphism $\lambda:B'\rightarrow B$ such that $\phi_{i}\circ\lambda=\lambda_{i}$, for any $i\in I$.
 \end{itemize}
    The inverse limit of the above system is denoted by $\lim\limits_{\longleftarrow}B_{i}$.}
\end{definition}

Recall from Gr\"atzer \cite{Gratzer1} that the inverse limits of families of algebras are constructed in the following way.
\begin{theorem}\cite{Gratzer1}\label{3.2}
 Let $\{B_{i},\pi_{ij},I\}$ be an inverse system of same algebras, $\prod_{i\in I}B_{i}$ be its product and $\pi_{i}:\prod_{i\in I}B_{i}\rightarrow B_{i}$ is defined by $\pi_{i}((x_{i})_{i\in I})=x_{i}$ for any $i\in I$. Let $$B=\{(b_{i})_{i\in I}\in\prod_{i\in I}B_{i}:\pi_{ij}(b_{i})=b_{j},j\leq i\}.$$
 Then B is a subalgebra of $\prod_{i\in I}B_{i}$ and $\{B,\phi_{i}\}_{i\in I}$ is the inverse limit of $\{B_{i},\pi_{ij},I\}$, where $\phi_{i}=\pi_{i}\upharpoonright_{B}$ for any $i\in I$.
\end{theorem}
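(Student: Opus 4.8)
The plan is to verify the two assertions separately, using throughout that the connecting maps $\pi_{ij}$ are algebra homomorphisms and that the product $\prod_{i\in I}B_{i}$ carries the coordinatewise algebra structure, so that each projection $\pi_{i}$ is a homomorphism. First I would show that $B$ is a subalgebra. For each operation symbol $f_{\alpha}$ of arity $n_{\alpha}$ and any threads $b^{1}=(b^{1}_{i})_{i},\dots,b^{n_{\alpha}}=(b^{n_{\alpha}}_{i})_{i}\in B$, the element $f_{\alpha}(b^{1},\dots,b^{n_{\alpha}})$ of the product has $i$-th coordinate $f_{\alpha}(b^{1}_{i},\dots,b^{n_{\alpha}}_{i})$. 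To confirm that this element again lies in $B$, I would apply $\pi_{ij}$ (for $j\leq i$) to the $i$-th coordinate and combine the homomorphism property of $\pi_{ij}$ with the thread condition $\pi_{ij}(b^{k}_{i})=b^{k}_{j}$:
$$\pi_{ij}\!\left(f_{\alpha}(b^{1}_{i},\dots,b^{n_{\alpha}}_{i})\right)=f_{\alpha}\!\left(\pi_{ij}(b^{1}_{i}),\dots,\pi_{ij}(b^{n_{\alpha}}_{i})\right)=f_{\alpha}(b^{1}_{j},\dots,b^{n_{\alpha}}_{j}),$$
which is exactly the $j$-th coordinate. Hence the defining condition of $B$ is preserved, so $B$ is closed under every $f_{\alpha}$; the nullary case in particular shows that the constant threads lie in $B$, so $B$ is nonempty. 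Consequently each $\phi_{i}=\pi_{i}\upharpoonright_{B}$ is a homomorphism, being a restriction of the homomorphism $\pi_{i}$.

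Next I would verify the two clauses of Definition \ref{3.1}(ii). The compatibility condition $\pi_{ij}\circ\phi_{i}=\phi_{j}$ for $j\leq i$ is immediate, since by the very definition of $B$ one has $\pi_{ij}(b_{i})=b_{j}$ for every thread $(b_{i})_{i}\in B$. For the universal property, suppose $B'$ is an algebra equipped with homomorphisms $\lambda_{i}\colon B'\to B_{i}$ satisfying $\pi_{ij}\circ\lambda_{i}=\lambda_{j}$ for all $j\leq i$. I would define $\lambda\colon B'\to\prod_{i\in I}B_{i}$ by $\lambda(b')=(\lambda_{i}(b'))_{i\in I}$. The hypothesis $\pi_{ij}(\lambda_{i}(b'))=\lambda_{j}(b')$ says precisely that $\lambda(b')$ is a thread, so $\lambda$ maps $B'$ into $B$; and $\lambda$ is a homomorphism because each coordinate map $\lambda_{i}$ is one and the operations on $B$ act coordinatewise. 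By construction $\phi_{i}\circ\lambda=\lambda_{i}$ for every $i$.

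Finally, uniqueness follows because the equations $\phi_{i}\circ\lambda=\lambda_{i}$ determine every coordinate of the image: if $\mu\colon B'\to B$ also satisfies $\phi_{i}\circ\mu=\lambda_{i}$ for all $i$, then the $i$-th coordinate of $\mu(b')$ is $\phi_{i}(\mu(b'))=\lambda_{i}(b')$, whence $\mu=\lambda$. The computations here are all routine, and the only point genuinely requiring care is the bookkeeping observation that the thread condition defining $B$ is the \emph{same} compatibility condition that both the operations and the universal arrow must respect; once this identification is made explicit, every verification reduces to the fact that $\pi_{ij}$, $\pi_{i}$, and the $\lambda_{i}$ are homomorphisms and that the product structure is coordinatewise.
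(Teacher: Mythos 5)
Your proof is correct, and since the paper states this theorem without proof (it is quoted from Gr\"atzer's \emph{Universal Algebra}), there is nothing for it to diverge from: your argument is exactly the standard verification given in the cited source --- coordinatewise closure of the thread set under the operations via the homomorphism property of the $\pi_{ij}$, the tautological compatibility $\pi_{ij}\circ\phi_{i}=\phi_{j}$, the induced arrow $\lambda(b')=(\lambda_{i}(b'))_{i\in I}$, and uniqueness because the $\phi_{i}$ jointly separate coordinates. One small caveat: your nonemptiness claim rests on the presence of nullary operations, which is fine in this paper's setting (residuated lattices have the constants $0$ and $1$), but for an arbitrary similarity type without constants the set $B$ can be empty, so the blanket assertion that $B$ is a subalgebra of $\prod_{i\in I}B_{i}$ requires either that convention-dependent case to be excluded or a separate hypothesis guaranteeing $B\neq\emptyset$.
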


Let $A$ be an algebra. Let $I$ denote the set of all congruences $\theta$ on $A$ such that $A/\theta$ is finite. We denote the image of $a\in A$ in $A/\theta$ by $[a]_{\theta}$. If $\theta\subseteq\theta'$, then there is a canonical projection $\varphi_{\theta'\theta}:A/\theta'\rightarrow A/\theta$ given by $\varphi_{\theta'\theta}([a]_{\theta'})=[a]_{\theta}$. Then $(I,\supseteq)$ is a directed set, and
$(\{A/\theta\},\{\varphi_{\theta'\theta}\},I)$ is an inverse system of algebras. The \emph{profinite completion $\widehat{A}$} of $A$ is the inverse limit of $(\{A/\theta\},\{\varphi_{\theta'\theta}\},I)$. By \cite{Gratzer1}, we may identify $\widehat{A}$ with the subalgebra of $\prod_{\theta\in I}A/\theta$ consisting of all $([a]_{\theta})_{\theta\in I}$ for which $\varphi_{\theta'\theta}([a]_{\theta'})=[a]_{\theta}$ whenever
$\theta'\subseteq\theta$. We define the canonical homomorphism $e_{A}:A\rightarrow \widehat{A}$ by $e_{A}(a)=([a]_{\theta})_{\theta\in I}$. Let $\pi_{\theta'}:\prod_{\theta\in I}A/\theta\rightarrow A/\theta'$ denote the projection map. We also denote the restriction of $\pi_{\theta'}$ to $\widehat{A}$ by $\pi_{\theta'}$.
One would expect a completion of an algebra $A$ to be an extension of $A$, however, the canonical homomorphism $e_{A}:A\rightarrow \widehat{A}$ is not always an embedding.

Let $(I,\leq)$ be a directed poset. Assume that $I'$ is a subset of $I$ in such a way that $(I',\leq)$ becomes a directed poset. We say that $I'$ is \emph{cofinal} in $I$ if for every $i\in I$ there is some $i'\in I'$ such that $i\leq i'$. If $\{X_{i},\varphi_{ij}, I\}$ is an inverse system and $I'$ is cofinal in $I$, then $\{X_{i},\varphi_{ij}, I'\}$ becomes an inverse system in an obvious way, and we say that $\{X_{i}, \varphi_{ij},I'\}$
is a cofinal subsystem of $\{X_{i},\varphi_{ij},I\}$.
\begin{proposition}\label{3.3}
Let $\{A_{i},\varphi_{ij},I\}$ be an inverse system of same algebras over a directed poset $I$ and assume that $I'$ is a cofinal subset of $I$. Then $$\varprojlim_{i\in I}X_{i}\cong\varprojlim_{i'\in I'}X_{i'}.$$
\end{proposition}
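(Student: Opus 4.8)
The plan is to realise both inverse limits concretely through Theorem \ref{3.2} and then to produce an explicit isomorphism between them by restriction of coordinates. Write $B=\{(b_{i})_{i\in I}\in\prod_{i\in I}A_{i}:\varphi_{ij}(b_{i})=b_{j},\ j\leq i\}$ for $\varprojlim_{i\in I}A_{i}$ and $B'=\{(b_{i'})_{i'\in I'}\in\prod_{i'\in I'}A_{i'}:\varphi_{i'j'}(b_{i'})=b_{j'},\ j'\leq i'\}$ for $\varprojlim_{i'\in I'}A_{i'}$. I would define $r:B\to B'$ by $r((b_{i})_{i\in I})=(b_{i'})_{i'\in I'}$, that is, by discarding the coordinates indexed outside $I'$. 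Since the defining relations of $B'$ form a subfamily of those of $B$, the map $r$ is well defined, and being the restriction to $B$ of a product projection it is automatically a homomorphism of algebras. It therefore remains only to show that $r$ is a bijection.

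Injectivity follows at once from cofinality. Suppose $(b_{i})_{i\in I}$ and $(c_{i})_{i\in I}$ in $B$ have the same image under $r$, i.e. $b_{i'}=c_{i'}$ for all $i'\in I'$. Given an arbitrary $i\in I$, choose $i'\in I'$ with $i\leq i'$; then the compatibility relations give $b_{i}=\varphi_{i'i}(b_{i'})=\varphi_{i'i}(c_{i'})=c_{i}$, so the two families agree on all of $I$.

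For surjectivity I would extend a given $(b_{i'})_{i'\in I'}\in B'$ over all of $I$: for each $i\in I$ choose some $i'\in I'$ with $i\leq i'$ and set $b_{i}:=\varphi_{i'i}(b_{i'})$. The delicate point, and the step where the argument really uses the hypothesis, is to verify that $b_{i}$ does not depend on the chosen $i'$. Given two admissible indices $i',i''\in I'$, directedness of $I'$ yields $k'\in I'$ with $i',i''\leq k'$; applying the relation $\varphi_{k'i'}(b_{k'})=b_{i'}$ together with the transitivity condition $\varphi_{i'i}\circ\varphi_{k'i'}=\varphi_{k'i}$ from Definition \ref{3.1}, one gets $\varphi_{i'i}(b_{i'})=\varphi_{k'i}(b_{k'})$, and symmetrically $\varphi_{i''i}(b_{i''})=\varphi_{k'i}(b_{k'})$, so the two values coincide.

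Once well-definedness is secured the remaining checks are routine: for $j\leq i$ one uses a single index $i'\geq i$ (hence $i'\geq j$) to compute $\varphi_{ij}(b_{i})=\varphi_{ij}(\varphi_{i'i}(b_{i'}))=\varphi_{i'j}(b_{i'})=b_{j}$, which places $(b_{i})_{i\in I}$ in $B$; and evaluating the construction at an index $i'\in I'$, taking the index $i'$ itself where $\varphi_{i'i'}=\mathrm{id}$, returns the original coordinate, so $r((b_{i})_{i\in I})=(b_{i'})_{i'\in I'}$. Hence $r$ is a bijective homomorphism, and therefore the desired isomorphism. Alternatively, one could argue abstractly by showing that $B$ equipped with the structure maps $\phi_{i'}$, $i'\in I'$, satisfies the universal property of the inverse limit of the cofinal subsystem, and invoke the uniqueness of inverse limits; the verifications involved are essentially the same. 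I expect the only genuine obstacle to be the well-definedness argument in surjectivity, which is precisely where the directedness of the cofinal set $I'$ is indispensable; everything else is a matter of unwinding the compatibility relations.
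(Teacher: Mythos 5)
Your proposal is correct and is in essence the paper's own proof: both realise the two limits concretely (as in Theorem \ref{3.2}) and exhibit an explicit coordinate-wise bijection, and your well-definedness verification for the extension in the surjectivity step is exactly the paper's ``observe that the maps $\overline{\varphi}_{j}$ are well-defined'' check, in both cases resting on the directedness of $I'$ together with the transitivity relations of Definition \ref{3.1}. The only difference is the direction of construction — you define the restriction map $r:\varprojlim_{i\in I}\rightarrow\varprojlim_{i'\in I'}$ and build its inverse by extending coordinates, whereas the paper constructs the induced map $\overline{\varphi}:\varprojlim_{i'\in I'}\rightarrow\varprojlim_{i\in I}$ and checks it is bijective; these maps are mutually inverse, so this is a presentational choice rather than a different route.
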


\begin{proof}
Assume that $\{A_{i},\varphi_{ij},I'\}$ is a cofinal subsystem of $\{A_{i},\varphi_{ij},I\}$ and denote by $(\varprojlim_{i'\in I'},\varphi_{i'}')$ and $(\varprojlim_{i\in I},\varphi_{i})$ their corresponding inverse limits, respectively. For any $j\in I$, let $j'\in I'$ be such that $j\leq j'$. Define $$\overline{\varphi}_{j}:\varprojlim_{i'\in I'}X_{i'}\rightarrow X_{j}$$
as the composition of canonical mappings $\varphi_{j'j},\varphi_{j'}'$. Observe that the maps $\overline{\varphi}_{j}$ are well-defined and compatible. Hence they induce a map
$$\overline{\varphi}:\varprojlim_{i'\in I'}X_{i'}\rightarrow \varprojlim_{i\in I}X_{i}$$
such that $\varphi_{j}\overline{\varphi}=\overline{\varphi}_{j}(j\in I)$. We claim that the mapping $\overline{\varphi}$ is a bijection. Noth that if $(x_{i'})\in \varprojlim_{i'\in I'}X_{i'}$ and
$\overline{\varphi}(x_{i'})=(y_{i})$, then $y_{i'}=x_{i'}$ for $i'\in I'$. Since $I'$ is cofinal in $I$, thus $\overline{\varphi}$ is an injection. To see that $\overline{\varphi}$ is a surjection, let $(y_{i})\in \varprojlim_{i\in I}X_{i}$ and consider the element $(x_{i'})$, where $x_{i'}=y_{i'}$ for every $i'\in I'$. Then $(x_{i'})\in \varprojlim_{i'\in I'}X_{i'}$ and obviously, $\overline{\varphi}(x_{i'})=(y_{i})$.
This proves the claim. The rest of the proof is obvious.
\end{proof}

In what follows we shall be specially interested in the topological space $X$ that arise as inverse limits
\[X=\lim_{\substack{\longleftarrow\\ i\in I}}X_{i}\]
of finite spaces $X_{i}$ endowed with the discrete topology. We call such a space \emph{profinite space} or a \emph{Boolean space}. A topological space is \emph{totally disconnected} if every point in the space is its own connected component.

\begin{theorem}\label{3.4}\cite{Ribes}
Let $X$ be a topological space. Then the following conditions are equivalent.
\begin{enumerate}
  \item[(a)] $X$ is a profinite space;
  \item[(b)] $X$ is compact Hausdorff and totally disconnected;
  \item[(c)] $X$ is compact Hausdorff and admits a base of clopen sets for its topology.
\end{enumerate}
\end{theorem}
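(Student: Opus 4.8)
The plan is to prove the three conditions equivalent by running the cycle $(a)\Rightarrow(b)\Rightarrow(c)\Rightarrow(a)$, with the passage from total disconnectedness to the existence of a clopen base being the step that carries the real content. For $(a)\Rightarrow(b)$, I would invoke the explicit description of the inverse limit from Theorem~\ref{3.2}: writing $X=\varprojlim_{i\in I}X_{i}$ as a subspace of $\prod_{i\in I}X_{i}$ with each $X_{i}$ finite and discrete, the product is compact by Tychonoff's theorem, Hausdorff as a product of Hausdorff spaces, and totally disconnected since each factor is. The limit is cut out by the conditions $\varphi_{ij}(x_{i})=x_{j}$, each of which is closed because the $X_{i}$ are Hausdorff, so $X$ is a closed subspace of the product and inherits all three properties.

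For $(b)\Rightarrow(c)$, the crux is to show that in a compact Hausdorff space the connected component of a point equals its \emph{quasi-component}, i.e. the intersection of all clopen sets containing it. Granting this, total disconnectedness forces the quasi-component of each $x$ to be $\{x\}$. To build a clopen neighbourhood base, I would fix $x$ and an open $U\ni x$; for every $y\in X\setminus U$ choose a clopen $C_{y}$ with $x\in C_{y}$ and $y\notin C_{y}$. The complements $X\setminus C_{y}$ form an open cover of the compact set $X\setminus U$, and extracting a finite subcover yields a clopen set $V=\bigcap_{k}C_{y_{k}}$ with $x\in V\subseteq U$.

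For $(c)\Rightarrow(a)$, I would form the inverse system indexed by all finite partitions of $X$ into nonempty clopen blocks, ordered by refinement; this poset is directed because the common refinement of two clopen partitions is again a clopen partition. To each partition $\mathcal{P}$ I associate the finite discrete space $X_{\mathcal{P}}$ of its blocks with the quotient map $q_{\mathcal{P}}\colon X\to X_{\mathcal{P}}$, refinements supplying the bonding maps. The canonical map $X\to\varprojlim X_{\mathcal{P}}$ is injective because the clopen base separates points, and surjective by a finite-intersection-property argument against compactness; being a continuous bijection from a compact space onto a Hausdorff space, it is a homeomorphism, exhibiting $X$ as a profinite space.

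The main obstacle is the step $(b)\Rightarrow(c)$, and precisely the coincidence of components and quasi-components in the compact Hausdorff setting. The inclusion "component $\subseteq$ quasi-component" is automatic, but the reverse direction is the one genuinely non-formal input: one must show that a quasi-component cannot split into two disjoint nonempty relatively clopen pieces, for otherwise compactness would let one separate those pieces by a single global clopen set, contradicting the defining intersection. Once this coincidence is available, everything else reduces to routine applications of Tychonoff's theorem, closedness of equalizers into Hausdorff spaces, and the finite-intersection property.
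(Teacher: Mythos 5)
This statement carries no proof in the paper: Theorem~\ref{3.4} is quoted from \cite{Ribes} (it is Theorem~1.1.12 of Ribes--Zalesskii), so there is no internal argument to compare against. Your proposal is correct, and it is in essence the standard proof found in that cited source: the cycle $(a)\Rightarrow(b)\Rightarrow(c)\Rightarrow(a)$, with $(a)\Rightarrow(b)$ via the inverse limit being a closed subspace of a compact Hausdorff totally disconnected product (note that Theorem~\ref{3.2} in the paper is purely algebraic, so for the topological closedness you should really appeal to the argument of Lemma~\ref{3.8a}(ii), which is exactly the ``equalizer conditions are closed when the targets are Hausdorff'' observation you make); $(b)\Rightarrow(c)$ via the \v{S}ura-Bura coincidence of components and quasi-components followed by the finite-subcover extraction of a clopen neighbourhood inside a given open set; and $(c)\Rightarrow(a)$ via the directed system of finite clopen partitions, injectivity from point-separation, surjectivity from the finite intersection property, and the fact that a continuous bijection from a compact space to a Hausdorff space is a homeomorphism.

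One substantive remark on the step you rightly single out as the crux. Passing from a splitting $Q=A\sqcup B$ of the quasi-component to a single global clopen set is more than ``compactness would let one separate those pieces'': you first need normality of the compact Hausdorff space to thicken $A$ and $B$ to disjoint open sets $U\supseteq A$, $V\supseteq B$; then compactness applied to the closed sets $\{C\setminus(U\cup V)\}$, as $C$ ranges over clopen sets containing $x$, yields finitely many whose intersection $C_{0}$ satisfies $C_{0}\subseteq U\cup V$; finally one checks that $C_{0}\cap U=C_{0}\setminus V$ is clopen, contains $x$, and misses $B$, contradicting $B\subseteq Q\subseteq C_{0}\cap U$. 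Your sketch names the correct contradiction but compresses the normality thickening and the clopenness of $C_{0}\cap U$, which are the two points where a written-out proof must do actual work. With those details supplied, the argument is complete.
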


\begin{definition}\label{3.5}
\emph{We call a residuated lattice $L$ \emph{profinite} if it is isomorphic to the inverse limit of an inverse system of finite residuated lattices. The class of this algebras denoted by $\mathbf{Pro}\mathcal{RL}$.}
\end{definition}

Thus, we see that if we are given a profinite algebra $A\cong\varprojlim_{i\in I}A_{i}$, then $A$ is a topological algebra in its profinite topology, which can definite in a natural way using the limiting cone
$(\pi_{i}:A\rightarrow A_{i})_{I}$. We will now record the fundamental fact that every profinite topology is Boolean, i.e., every profinite topology is compact Hausdorff and zero-dimensional.
\begin{proposition}\label{3.6}\cite{Banaschewski}
If $A\cong\varprojlim_{i\in I}A_{i}$ is a profinite algebra, then $A$ is a Boolean topological algebra in its profinite topology.
\end{proposition}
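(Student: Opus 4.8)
The plan is to realize $A$ concretely as a closed subspace of a product of finite discrete spaces, and then to transport the three defining properties of a Boolean space (compactness, Hausdorffness, zero-dimensionality) from the product down to the closed subspace. By Theorem \ref{3.2}, I may identify $A$ with the subalgebra
$$B=\left\{(b_i)_{i\in I}\in\prod_{i\in I}A_i:\pi_{ij}(b_i)=b_j\ \text{whenever}\ j\le i\right\}$$
of the product $\prod_{i\in I}A_i$, and the profinite topology on $A$ is exactly the subspace topology induced from the product topology, where each $A_i$ carries the discrete topology. Since each $A_i$ is finite, it is compact, Hausdorff and zero-dimensional.

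First I would establish the corresponding properties for the ambient product $P=\prod_{i\in I}A_i$. By Tychonoff's theorem $P$ is compact; being a product of Hausdorff (respectively zero-dimensional) spaces it is Hausdorff (respectively zero-dimensional), a base of clopen sets being furnished by the cylinders that fix finitely many coordinates. Thus $P$ is a Boolean space, and moreover a topological algebra under the coordinatewise operations, since each such operation is continuous in the product topology.

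The key step is to show that $B$ is closed in $P$. For each pair $j\le i$ I would consider the two maps $P\to A_j$ given by $(b_k)_k\mapsto\pi_{ij}(b_i)$ and $(b_k)_k\mapsto b_j$; both are continuous, being composites of coordinate projections with the (automatically continuous) map $\pi_{ij}$ defined on the discrete space $A_i$. Their equalizer
$$E_{ij}=\{(b_k)_k\in P:\pi_{ij}(b_i)=b_j\}$$
is closed precisely because the target $A_j$ is Hausdorff. Hence $B=\bigcap_{j\le i}E_{ij}$ is an intersection of closed sets and is therefore closed in $P$. This is the one genuinely load-bearing point of the argument; everything else is a routine transfer of topological properties through products and subspaces.

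Finally I would conclude as follows. A closed subspace of a compact space is compact, a subspace of a Hausdorff space is Hausdorff, and a subspace of a zero-dimensional space is again zero-dimensional; hence $B$, with the subspace topology, is compact, Hausdorff and zero-dimensional, so by Theorem \ref{3.4} it is a profinite (Boolean) space. Since $B$ is a subalgebra of the topological algebra $P$, the operations of $P$ restrict to continuous operations on $B$, making $B$ a topological algebra. Therefore $A\cong B$ is a Boolean topological algebra in its profinite topology, as required.
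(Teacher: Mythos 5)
Your proof is correct, and it follows essentially the route the paper itself indicates: Proposition \ref{3.6} is quoted from \cite{Banaschewski} without a proof in the text, but Remark \ref{3.7} sketches precisely your argument (endow each $A_i$ with the discrete topology, show that $\varprojlim_{i\in I}A_i$ is a closed subalgebra of $\prod_{i\in I}A_i$, then invoke general topology), and your equalizer computation for closedness is the same as the one carried out for residuated lattices in Lemma \ref{3.8a}(ii). The identification of the profinite topology with the subspace topology from the product, the Tychonoff/Hausdorff/zero-dimensionality transfer to a closed subspace, and the continuity of the restricted operations are all handled correctly, so nothing is missing.
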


\begin{remark}\label{3.7}
We have chosen to first describe profinite algebras using universal algebra, via. as the limit of a diagram of finite algebras $\{A_{i},\varphi_{ij},I\}$, and then to indicate that one can definite a topology on
$\varprojlim_{i\in I}A_{i}$, making $\varprojlim_{i\in I}A_{i}$ a topological algebra. Alternatively, one can start by endowing each algebra $A_{i}$ with the discrete topology, and then show that one can also take limits of topological algebras, so that it follows immediately that $\varprojlim_{i\in I}A_{i}$ is a topological algebra. One can then show that $\varprojlim_{i\in I}A_{i}$ is a closed subalgebra of $\prod_{i\in I}A_{i}$, so that it
follows from general topology that the profinite topology on $\varprojlim_{i\in I}A_{i}$ is a Boolean topology.
\end{remark}

We begin this section by showing that an infinite profinite residuated lattice cannot be countable. This is a general fact for locally compact topological groups \cite{Fuchs}, but here we present a proof for profinite residuated lattices only. The first part of the following proposition is similar to the classical Baire category theorem, valid for locally compact spaces.
\begin{proposition} \label{3.8}
Let $L$ be a profinite residuated lattice.
\begin{enumerate}
  \item[(a)] Let $C_{1},C_{2},\ldots$ be countably infinite set of nonempty closets of $L$ having empty interior. Then
  $$L\neq\bigcup_{n=1}^{\infty}C_{i}.$$
  \item[(b)] The cardinality $|L|$ of $L$ is either finite or uncountable.
\end{enumerate}
\end{proposition}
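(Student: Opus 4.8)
The plan is to read (a) as a Baire-category statement for Boolean spaces and then to peel off (b) as a corollary. For (a), I would start from Proposition \ref{3.6}: since $L$ is profinite, its profinite topology is Boolean, so by Theorem \ref{3.4} the space $(L,\tau)$ is compact, Hausdorff, and has a base of clopen sets. I would then run the classical nested-set argument with clopen sets in place of closed balls, so that no metric is needed. Put $U_{0}=L$, which is a nonempty clopen set (a residuated lattice carries the constants $0,1$, and if $0=1$ the statement is vacuous). Inductively, given a nonempty clopen $U_{n-1}$, note that $U_{n-1}\not\subseteq C_{n}$, for otherwise $U_{n-1}$ would be a nonempty open subset of $C_{n}$, contradicting that $C_{n}$ has empty interior. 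Hence $U_{n-1}\setminus C_{n}$ is a nonempty open set (here $C_{n}$ closed is used), and zero-dimensionality lets me pick a clopen $U_{n}$ with $\emptyset\neq U_{n}\subseteq U_{n-1}\setminus C_{n}$; thus $U_{n}$ is nonempty clopen, $U_{n}\subseteq U_{n-1}$, and $U_{n}\cap C_{n}=\emptyset$.

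To close (a) I would invoke compactness. The sets $U_{1}\supseteq U_{2}\supseteq\cdots$ are nonempty and closed, hence enjoy the finite intersection property, so compactness of $L$ yields $z\in\bigcap_{n\geq 1}U_{n}$. Since $z\in U_{n}$ and $U_{n}\cap C_{n}=\emptyset$ for every $n$, we get $z\notin\bigcup_{n\geq 1}C_{n}$, whence $L\neq\bigcup_{n\geq 1}C_{n}$. I regard this half as routine.

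For (b) I would argue by contradiction and reduce to (a). Suppose $L$ is infinite and countable, say $L=\{x_{1},x_{2},\ldots\}$. As $L$ is Hausdorff, each $\{x_{n}\}$ is closed and $L=\bigcup_{n\geq 1}\{x_{n}\}$; so by the contrapositive of (a) some $\{x_{n}\}$ has nonempty interior, i.e. $L$ has an isolated point. It therefore suffices to show that an infinite profinite residuated lattice has \emph{no} isolated points, for then every $\{x_{n}\}$ is a nonempty closed set of empty interior and (a) is contradicted, forcing $|L|$ finite or uncountable. One half of the no-isolated-point claim is immediate and algebraic: the basic clopen neighbourhoods of the profinite topology are the cosets $[x]_{\theta_{F}}$ of finite-index filters $F$, so $x$ is isolated precisely when $[x]_{\theta_{F}}=\{x\}$ for some such $F$; taking $x=1$ this reads $F=[1]_{\theta_{F}}=\{1\}$, and by Theorem \ref{4.1} the trivial filter has finite index only if $L$ is finite. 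Hence in an infinite $L$ the element $1$ is never isolated. I would also note that (a), applied to closed subspaces, forces $L$ to be scattered (a nonempty closed subspace with no isolated points would be compact Hausdorff and perfect, hence uncountable by the argument of (a)), so the isolated points of $L$ are dense.

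The hard part, and the genuine substitute for the homogeneity that makes this trivial for topological groups, is ruling out isolated points $x\neq 1$. The warning example is the convergent sequence $\{0\}\cup\{1/n:n\geq 1\}$, a countable Boolean space; my reassurance that (b) is nevertheless true is that a direct check shows the G\"odel implication on it is discontinuous at $(0,0)$, so it does not underlie a topological residuated lattice, and thus continuity of the operations must be exactly what forbids such countable inverse limits. To turn this into a proof I would use density of the isolated points together with the non-isolation of $1$ to produce distinct isolated points $x_{k}\to 1$ (as a net, or a sequence since a countable compact Hausdorff space is metrizable), and then test joint continuity of $\odot$ and $\to$ at $(1,1)$ along this net: continuity forces $d(x_{k},x_{l})\to 1$ for $d(u,v)=(u\to v)\odot(v\to u)$, while each witness $[x_{k}]_{\theta_{F_{k}}}=\{x_{k}\}$ pushes $d(x_{k},x_{l})$ out of the clopen filter $F_{k}$. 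Extracting an outright contradiction from the clash between the forced $\to$-convergence and the discreteness recorded by the $F_{k}$ is the step I expect to require the real work.
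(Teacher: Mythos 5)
Your part (a) is correct and is, modulo bookkeeping, the paper's own proof: the paper likewise intersects successively with the dense open sets $D_{i}=L\setminus C_{i}$, extracts nested nonempty clopen sets $U_{1}\supseteq U_{2}\supseteq\cdots$ via Theorem \ref{3.4}, and concludes by compactness and the finite intersection property. The real divergence is in part (b), where the paper says only that ``(b) follows immediately from (a)''. That deduction is valid only if an infinite profinite residuated lattice has no isolated points: for countable $L=\{x_{1},x_{2},\ldots\}$, part (a) merely yields that some singleton $\{x_{n}\}$ has nonempty interior. You identified exactly this issue, and your partial steps are sound: by Proposition \ref{3.9} the basic neighbourhoods of $1$ are finite-index kernel filters, so $1/F=F=\{1\}$ together with the filter--congruence correspondence (Theorem \ref{4.1}) forces $L$ finite, i.e.\ $1$ is never isolated in an infinite $L$; and in the countable case the isolated points are dense. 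So where the paper glosses, you give an honest account of the missing step --- but you do not close it, and you say so.

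The trouble is that the step you postpone is not merely hard but impossible: part (b) is false as stated, so no continuity argument can rule out isolated points $x\neq 1$. Let $G_{n}$ be the $n$-element G\"{o}del chain and let $\pi_{n+1,n}:G_{n+1}\rightarrow G_{n}$ be the quotient by the filter consisting of the top two elements; on a G\"{o}del chain the congruence $\theta_{F}$ collapses $F$ to $1$ and nothing else, so this is a surjective residuated lattice homomorphism. Computing the threads, one finds that a coherent sequence is either constantly $0$, constantly $1$, or eventually constant at the $j$-th element from the bottom; hence $\varprojlim G_{n}$ is the countably infinite chain $0<x_{1}<x_{2}<\cdots<1$, and in its profinite topology $0$ and every $x_{j}$ are isolated (each is a fibre of some projection) while $x_{j}\rightarrow 1$. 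This is a countably infinite profinite residuated lattice; consistently with (a), its only nonempty closed set with empty interior is $\{1\}$. Your heuristic from $\{0\}\cup\{1/n\}$ misled you because you oriented the chain so that the sequence decreases to its limit: there $u\rightarrow v=v$ for $u>v$ indeed breaks continuity at the limit, but with the sequence increasing to the top one has $x_{i}\rightarrow x_{j}=x_{j}$ for $i>j$, which is again near $1$ whenever both arguments are, and all operations are continuous --- as they must be in any inverse limit of finite discrete algebras. So both your plan and the paper's one-line proof of (b) founder on the same rock: profinite residuated lattices, unlike profinite groups, are not homogeneous, and the dichotomy ``finite or uncountable'' cannot be rescued; what (a) actually yields is only the weaker statement that an infinite profinite residuated lattice \emph{without isolated points} is uncountable.
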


\begin{proof}
Part $(b)$ follows immediately from $(a)$. To prove $(a)$, assume that $L=\bigcup_{n=1}^{\infty}C_{i}$, where each $C_{i}$ is a nonempty closed subset of $L$ with empty interior. Then $D_{i}=L-C_{i}$ is a dense open subset of $L$, for each $i\in\{1,2,\ldots\}$.

Next consider a nonempty open subset $U_{0}$ of $L$; then $U_{0}\cap D_{1}$ is open and nonempty since $D_{1}$ is open and dense in $L$. By Theorem \ref{3.4}, there is a nonempty clopen subset $U_{1}$ of $U_{0}\cap D_{1}$.
 Similarly, $U_{1}\cap D_{2}$ is open and nonempty, therefore there is a nonempty clopen subset $U_{2}$ of $U_{1}\cap D_{2}$. Proceeding in this manner we obtain a nested sequence of clopen nonempty subsets
  $$U_{1}\supseteq U_{2}\supseteq\cdots\supseteq U_{i}\supseteq \cdots$$
such that $U_{i}\subseteq D_{i}\cap U_{i-1}$ for each $i\in\{1, 2,\ldots\}$. Since $L$ is compact and the closed sets $U_{i}$ have the finite intersection property, we have that
    $$\bigcap_{i=1}^{\infty}U_{i}\neq\emptyset.$$
On the other hand,
$$\bigcap_{i=1}^{\infty}U_{i}\subseteq\bigcap_{i=1}^{\infty}D_{i}=L-(\bigcup_{i=1}^{\infty}C_{i})=\emptyset,$$
a contradiction.
\end{proof}

\begin{lemma}\label{3.8a}
Let $\{L_{i},\varphi_{ij},I\}$ be an inverse system of topological residuated lattices $L_{i}$. Then we have:
\begin{enumerate}
  \item[(i)] the inverse limit $\underleftarrow{\lim}\{L_{i}:i\in I\}$ forms a subalgebra of $\prod\{L_{i}:i\in I\}$;
  \item[(ii)] if each topological residuated lattice in $\{L_{i}:i\in I\}$ is Hausdorff, then the inverse limit $\underleftarrow{\lim}\{L_{i}:i\in I\}$ is topologically closed in the product $\prod\{L_{i}:i\in I\}$;
  \item[(iii)] if each topological residuated lattice in $\{L_{i}:i\in I\}$ is non-empty, compact and Hausdorff, then the inverse limit $\underleftarrow{\lim}\{L_{i}:i\in I\}$ is non-empty.
\end{enumerate}
\end{lemma}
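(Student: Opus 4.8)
The plan is to prove the three parts of Lemma \ref{3.8a} in sequence, leaning on the explicit realization of the inverse limit as a subalgebra of the product given in Theorem \ref{3.2}. Throughout I write $L=\underleftarrow{\lim}\{L_{i}:i\in I\}$ and view it concretely as
\[
L=\Bigl\{(b_{i})_{i\in I}\in\prod_{i\in I}L_{i}:\varphi_{ij}(b_{i})=b_{j}\ \text{whenever}\ j\leq i\Bigr\},
\]
endowed with the subspace topology inherited from the product topology on $\prod_{i\in I}L_{i}$.

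For part (i), since the transition maps $\varphi_{ij}$ are residuated lattice homomorphisms, each defining equation $\varphi_{ij}(b_{i})=b_{j}$ is preserved under the componentwise operations of the product algebra. Hence $L$ is closed under $\wedge,\vee,\odot,\rightarrow$ and contains the constants $0=(0_{i})_{i\in I}$, $1=(1_{i})_{i\in I}$, so $L$ is a subalgebra; this is exactly the content of Theorem \ref{3.2} and requires only a routine verification. For part (ii), I would show $L$ is an intersection of closed sets. For each pair $j\leq i$, consider the two continuous maps $\prod_{k}L_{k}\to L_{j}$ sending $(b_{k})_{k}$ to $\varphi_{ij}(\pi_{i}((b_{k})_{k}))$ and to $\pi_{j}((b_{k})_{k})$ respectively; both are continuous because projections are continuous and each $\varphi_{ij}$ is continuous. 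Since each $L_{j}$ is Hausdorff, the equalizer
\[
E_{ij}=\bigl\{(b_{k})_{k}:\varphi_{ij}(b_{i})=b_{j}\bigr\}
\]
is the preimage of the diagonal $\Delta_{L_{j}}$ under the continuous map $(b_{k})_{k}\mapsto(\varphi_{ij}(b_{i}),b_{j})$, and the diagonal is closed precisely because $L_{j}$ is Hausdorff. Thus each $E_{ij}$ is closed, and $L=\bigcap_{j\leq i}E_{ij}$ is closed in the product.

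For part (iii), the key input is Tychonoff's theorem: if every $L_{i}$ is non-empty, compact and Hausdorff, then $\prod_{i\in I}L_{i}$ is compact (and Hausdorff), and by part (ii) the closed subset $L$ is itself compact. The substance is to show $L\neq\emptyset$, and here I would use a finite-intersection-property argument on the compact product space. For each pair $j\leq i$ set $F_{ij}=E_{ij}$ as above, a closed subset of the product; one shows that any finite subcollection of the $F_{ij}$ has non-empty intersection by using the directedness of $I$ to find an upper bound of the finitely many indices involved and then choosing a compatible tuple supported above that bound (filling in the remaining coordinates arbitrarily, which is possible since each $L_{i}$ is non-empty). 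Compactness of the product then forces $\bigcap_{j\leq i}F_{ij}=L\neq\emptyset$.

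I expect part (iii) to be the main obstacle, since it is the only place where non-emptiness of a potentially infinite inverse limit must be extracted, and the finite-intersection-property verification requires care: one must check that the sets being intersected are genuinely closed and non-empty, and that the directedness of $I$ really does let one realize every finite compatibility constraint simultaneously. Parts (i) and (ii) are essentially formal once the concrete description of $L$ and the continuity of the $\varphi_{ij}$ are in hand, so the bulk of the work and the essential use of the compactness hypothesis both reside in part (iii).
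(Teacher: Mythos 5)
Your proposal is correct and takes essentially the same approach as the paper: both realize $\underleftarrow{\lim}\{L_{i}:i\in I\}$ as an intersection of closed compatibility subsets of the product (closed because each $L_{j}$ is Hausdorff, so the relevant diagonal preimages are closed) and then obtain non-emptiness from compactness via the finite intersection property, using directedness of $I$ to exhibit a point in any finite intersection by propagating an arbitrary element down from an upper bound. The only cosmetic difference is indexing: you take one equalizer $E_{ij}$ per pair $j\leq i$, whereas the paper bundles all constraints below a single index $k$ into one set $A_{k}$, which makes the family $\{A_{k}\}$ automatically down-directed and renders the finite intersection check immediate.
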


\begin{proof}
The proof of $(i)$ is easy. We sketch the proof of $(ii)$ and $(iii)$. First assume that each $L_{i}$ is Hausdorff. For each $k\in I$, define the set
$$A_{k}=\{a\in\prod_{i\in I}:(\forall i\leq k)a_{i}=\varphi_{ki}(a_{k})\}.$$
Each $A_{k}$ is topological closed in $\Pi_{i\in I}L_{i}$, and $\underleftarrow{\lim}\{L_{i}:i\in I\}=\bigcap_{k\in I}A_{k}$. So the inverse limit is topological closed. Now assume further that each $L_{i}$ is non-empty
and compact. Since $I$ is direct and each $L_{i}$ is non-empty, every finite intersection of members of $\{A_{k}:k\in I\}$ is non-empty. By compactness, the set $\bigcap_{k\in I}A_{k}$ is non-empty.
\end{proof}

\begin{proposition}\label{3.9}
Let $$L=\varprojlim_{i\in I}L_{i},$$
where $\{L_{i},\varphi_{ij},I\}$ is an inverse system of finite residuated lattices $L_{i}$, and let $$\pi_{i}:L\rightarrow L_{i}(i\in I)$$
be the projection homomorphisms. Then $$\{S_{i}:S_{i}=Ker(\pi_{i})\}$$
is a fundamental system of open neighborhoods of 1 in L.
\end{proposition}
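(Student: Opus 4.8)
The plan is to unwind the definition of the profinite topology on $L$ as the subspace topology inherited from the product $\prod_{i\in I}L_{i}$ of the finite discrete spaces $L_{i}$ (equivalently, as the initial topology for the limiting cone $(\pi_{i})_{i\in I}$, as indicated in Remark \ref{3.7}), and then to verify directly the two defining properties of a fundamental system of neighborhoods of $1$: that each $S_{i}$ is an open neighborhood of $1$, and that every open neighborhood of $1$ contains some $S_{i}$.

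First I would observe that each $S_{i}=Ker(\pi_{i})=\pi_{i}^{-1}(\{1\})=\{x\in L:\pi_{i}(x)=1\}$ is indeed an open neighborhood of $1$. Since $\pi_{i}$ is a homomorphism it preserves $1$, so $1\in S_{i}$; and since $L_{i}$ carries the discrete topology, the singleton $\{1\}$ is open in $L_{i}$, whence $S_{i}=\pi_{i}^{-1}(\{1\})$ is open in $L$ by continuity of $\pi_{i}$. (Because $\mathcal{RL}$ is congruence $1$-regular, $S_{i}$ is exactly the filter determining the congruence $Ker(\pi_{i})$, which is the natural meaning of kernel here.) Next I would record the monotonicity of the family along the index set: using the compatibility relation $\pi_{j}=\varphi_{ij}\circ\pi_{i}$ valid for $j\leq i$, together with $\varphi_{ij}(1)=1$, I get that $\pi_{i}(x)=1$ forces $\pi_{j}(x)=\varphi_{ij}(1)=1$, so that $S_{i}\subseteq S_{j}$ whenever $j\leq i$. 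Combined with the directedness of $I$, this shows $\{S_{i}\}$ is downward directed: given a finite $F\subseteq I$, pick $k\in I$ with $i\leq k$ for all $i\in F$, and then $S_{k}\subseteq\bigcap_{i\in F}S_{i}$.

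Finally I would show that $\{S_{i}\}$ is a neighborhood base at $1$. A base for the product (hence for the subspace) topology is given by the finite intersections $\bigcap_{i\in F}\pi_{i}^{-1}(V_{i})$ with $F\subseteq I$ finite and $V_{i}\subseteq L_{i}$; such a set contains $1$ precisely when $1\in V_{i}$ for each $i\in F$, and the smallest of these are exactly the sets $\bigcap_{i\in F}S_{i}$. Thus every open $U\ni 1$ contains some $\bigcap_{i\in F}S_{i}$, which by the previous step contains a single $S_{k}$, so $U\supseteq S_{k}$. Together with the fact that each $S_{i}$ is itself an open neighborhood of $1$, this is precisely the assertion that $\{S_{i}\}$ is a fundamental system of open neighborhoods of $1$.

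The argument is essentially bookkeeping, so I do not anticipate a genuine obstacle. The two points demanding care are getting the direction of the order in the inverse system right, so that the inclusion reads $S_{i}\subseteq S_{j}$ for $j\leq i$ and not the reverse, and correctly identifying the product-topology neighborhood base at $1$ with the finite intersections $\bigcap_{i\in F}S_{i}$ before collapsing them to a single $S_{k}$ by directedness of $I$.
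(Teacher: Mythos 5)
Your proof is correct and takes essentially the same route as the paper: both realize the profinite topology as the subspace topology inherited from the product of the finite discrete $L_{i}$, identify the basic neighborhoods of $1$ with (intersections of) the cylinder sets $\pi_{i}^{-1}(\{1\})$, and use directedness of $I$ together with the compatibility maps $\varphi_{ij}$ to absorb a finite intersection $\bigcap_{i\in F}S_{i}$ into a single $S_{k}$. If anything, your version is slightly more careful: you invoke only the inclusion $S_{k}\subseteq\bigcap_{i\in F}S_{i}$, whereas the paper asserts the equality $S_{i_{0}}=\bigcap_{j}S_{i_{j}}$, which in general holds only as an inclusion --- and the inclusion is all that the argument requires.
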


\begin{proof}
Consider the family of neighborhoods of 1 in $\prod_{i\in I}L_{i}$ of the form $$(\prod_{i\neq i_{1},\ldots,i_{t}}L_{i})\times\{1\}_{i_{1}}\times\cdots\times\{1\}_{i_{t}},$$
for any finite collection of indexes $i_{i},\ldots,i_{t}\in I$, where $\{1\}_{i}$ denotes the subset of $L_{i}$ consisting of the identity element. Since each $L_{i}$ is discrete, this family is fundamental system of
neighborhoods of the identity element of $\prod_{i\in I}L_{i}$. Let $i_{0}\in I$ be such that $i_{1},\ldots,i_{t}\leq i_{0}$. Then
$$L\cap[(\prod_{i\neq i_{0}})L_{i}\times\{1\}_{i_{0}}]=L\cap[(\prod_{i\neq i_{1},\ldots,i_{t}}L_{i})\times\{1\}_{i_{1}}\times\cdots\times\{1\}_{i_{t}}].$$
Therefore the family of neighborhoods of 1 in $L$, of the form $$L\cap[(\prod_{i\neq i_{0}})L_{i}\times\{1\}_{i_{0}}]$$
is a fundamental system of open neighborhoods of 1. Finally, observe that $$L\cap[(\prod_{i\neq i_{0}})L_{i}\times\{1\}_{i_{0}}]=Ker(\pi_{i_{0}})=S_{i_{0}}.$$
\end{proof}

\begin{theorem}\label{3.10}
Let $(L,\tau)$ be a compact topological residuated lattice. Then the following conditions are equivalent:
\begin{enumerate}
  \item[(i)] $(L,\tau)$ is profinite;
  \item[(ii)] there is a set $\mathcal{S}$ of clopen filters(congruences) on $L$ such that
  \item[(a)] the set $(\mathcal{S},\supseteq)$ is directed, and
  \item[(b)] for all $x,y\in L$ with $x\neq y$, there is a filter $F\in\mathcal{S}$ such that $x/F\neq y/F$ (congruence $\theta\in\mathcal{S}$ such that $[x]_{\theta}\neq [y]_{\theta}$).
\end{enumerate}
\end{theorem}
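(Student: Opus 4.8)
The plan is to prove the two implications separately, freely translating between a filter and the congruence it determines via the isomorphism $\mathcal{F}(L)\cong Con(L)$ of Theorem \ref{4.1}. The whole argument rests on two observations that tie the topology to the algebra in the compact setting. First, a continuous homomorphism onto a finite discrete residuated lattice has clopen kernel and clopen cosets, since a point (and the diagonal) is clopen in a finite discrete space and is pulled back continuously. Second, conversely, a clopen congruence on a compact space has only finitely many classes: writing $\iota_x(y)=(x,y)$, each class $[x]_{\theta_F}=(\iota_x)^{-1}(\theta_F)$ is clopen, so the classes form a partition of the compact space $L$ into pairwise disjoint open sets, which forces their number to be finite.

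For (i)$\Rightarrow$(ii) I would assume $L=\varprojlim_{i\in I}L_i$ with each $L_i$ finite, take the projection homomorphisms $\pi_i:L\to L_i$, set $\theta_i=\ker\pi_i$ with determined filter $F_i=[1]_{\theta_i}=\pi_i^{-1}(\{1\})$, and let $\mathcal{S}=\{F_i:i\in I\}$. Clopenness of each $F_i$ (and $\theta_i$) is the first observation above. For (a), directedness of $I$ together with the compatibility relation $\pi_i=\varphi_{ki}\circ\pi_k$ gives $\theta_k\subseteq\theta_i\cap\theta_j$ whenever $i,j\le k$, hence $F_k\subseteq F_i\cap F_j$, which is exactly directedness of $(\mathcal{S},\supseteq)$. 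For (b), if $x\neq y$ then, identifying $L$ with a subalgebra of $\prod_i L_i$, the two elements differ in some coordinate, i.e. $\pi_i(x)\neq\pi_i(y)$, so $x/F_i\neq y/F_i$.

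For (ii)$\Rightarrow$(i) I would build the inverse system $\{L/F,\varphi_{F'F},(\mathcal{S},\supseteq)\}$, where for $F'\subseteq F$ the map $\varphi_{F'F}:L/F'\to L/F$ is the canonical projection. By the second observation each $L/F$ is a \emph{finite} residuated lattice, and since the classes are clopen the quotient map $L\to L/F$ is continuous (indeed open) for the discrete topology. I then form $\widehat{L}=\varprojlim_{F\in\mathcal{S}}L/F$, a subalgebra of the product of finite discrete residuated lattices hence a Hausdorff profinite space, and the canonical homomorphism $e:L\to\widehat{L}$, $e(x)=(x/F)_{F}$. Injectivity of $e$ is precisely the separation condition (b), and continuity of $e$ holds coordinatewise, so $e(L)$ is compact and therefore closed in $\widehat{L}$.

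The main obstacle is surjectivity of $e$, which I would settle by a density argument. A basic open set of $\widehat{L}$ constrains only finitely many coordinates $F_1,\dots,F_n$; using directedness of $(\mathcal{S},\supseteq)$ I choose $F_0\in\mathcal{S}$ with $F_0\subseteq F_1\cap\cdots\cap F_n$ and then invoke surjectivity of the quotient map $L\to L/F_0$ to produce an $x\in L$ matching the prescribed value at $F_0$, hence at every $F_k$; thus $e(L)$ is dense. Being also closed, $e(L)=\widehat{L}$, so $e$ is a continuous homomorphic bijection from the compact space $L$ onto the Hausdorff space $\widehat{L}$, hence a homeomorphism, giving $L\cong\widehat{L}$ and establishing profiniteness. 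I expect the interplay of compactness with directedness in this last step, together with the clopen-congruence-implies-finite-quotient observation, to be the crux; the remaining verifications are routine.
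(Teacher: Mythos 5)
Your proposal is correct and follows essentially the same route as the paper: the same inverse system $\{L/F : F\in\mathcal{S}\}$ ordered by reverse inclusion, with compactness and clopenness forcing each quotient $L/F$ to be finite, and the same choice $\mathcal{S}=\{\ker\pi_{i} : i\in I\}$ in the converse direction. The only (equivalent) variations are that you establish surjectivity via a dense-image-plus-closed-image argument where the paper intersects the coset family $\{z(F):F\in\mathcal{S}\}$ using the finite intersection property, and you derive the separation condition (b) directly from the coordinates of the inverse limit rather than via Proposition \ref{3.9} and Hausdorffness.
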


\begin{proof}
First assume that $(L,\tau)$ is compact, and let $\mathcal{S}$ be a set of clopen filters on $L$ such that $(a)$ and $(b)$ hold. We can set up an inverse system on $\{L/F:F\in\mathcal{S}\}$, where $\mathcal{S}$ is ordered
by reverse inclusion: for all $F\subseteq G$ in $\mathcal{S}$, define the homomorphism $\varphi_{FG}:L/F\rightarrow L/G$ by $\varphi_{FG}(x/F)=x/G$. Since $(L,\tau)$ is compact, each quotient $L/F$ is finite. It is
straightforward to prove that we can definite an isomorphism $\psi:L\rightarrow \underleftarrow{\lim}\{L/F:F\in\mathcal{S}\}$ by $\psi(x)(F)=x/F$. (To prove that $\psi$ is surjective, use the compactness of $(L,\tau)$) to
find a member of $\bigcap\{z(F):F\in\mathcal{S}\}$, for each $z\in\underleftarrow{\lim}\{L/F:F\in\mathcal{S}\}$.) This inverse limit is clearly surjective. By condition $(b)$, it follows that $\psi$ is injective.

Now assume that $(L,\tau)$ is profinite. We can assume that it is equal to an inverse limit $\underleftarrow{\lim}\{L_{i}:i\in I\}$ of finite residuated lattices, that is, $L=\underleftarrow{\lim}\{L_{i}:i\in I\}$.
Let $\mathcal{S}=\{Ker(\pi_{i}):i\in I\}$. Then $Ker(\pi_{i})$ is a clopen filters of $L$, for each $i\in I$. Thus the condition $(a)$ clearly holds. By Proposition \ref{3.9} and profinite topological is always Hausdorff,
it follows that $\bigcap\{Ker(\pi_{i}):i\in I\}=\{1\}$. Thus, it deduces that the condition $(b)$ holds.
\end{proof}

The following result is a particular case of a theorem of universal algebra.
\begin{proposition} \label{4.6}
A residuated lattice $L$ is a subdirect product of a family $L_{i}$ of residuated lattices if and only if there is a family $\{F_{i}\}_{i\in I}$ of filters of $L$ such that
\begin{itemize}
\item[\rm (i)] $L_{i}\cong L/F_{i}$ for each $i\in I$;
\item[\rm (ii)] $\bigcap_{i\in I}F_{i}=\{1\}$.
\end{itemize}
\end{proposition}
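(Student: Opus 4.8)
The plan is to reduce the statement to the classical subdirect representation theorem of universal algebra, exploiting the correspondence between filters and congruences recorded in Theorem \ref{4.1}. Recall that $L$ is a subdirect product of $(L_{i})_{i\in I}$ precisely when there is an embedding $\varphi\colon L\hookrightarrow\prod_{i\in I}L_{i}$ for which every projection $\pi_{i}\circ\varphi\colon L\to L_{i}$ is surjective. I would first set up this dictionary and then treat the two implications separately, translating condition (ii) into the statement that a certain intersection of congruences is the diagonal $\Delta_{L}$.

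For the forward direction, suppose $L$ is a subdirect product of $(L_{i})$ via such an embedding $\varphi$. Put $\theta_{i}=\ker(\pi_{i}\circ\varphi)$; since $\pi_{i}\circ\varphi$ is onto, the homomorphism theorem gives $L/\theta_{i}\cong L_{i}$. Let $F_{i}=[1]_{\theta_{i}}$ be the filter determined by $\theta_{i}$ in the sense of Theorem \ref{4.1}, so that $L/F_{i}=L/\theta_{i}\cong L_{i}$, which is (i). For (ii) I would use that $\varphi$ is injective to conclude $\bigcap_{i}\theta_{i}=\Delta_{L}$ (if $(x,y)\in\theta_{i}$ for all $i$, then $\pi_{i}\varphi(x)=\pi_{i}\varphi(y)$ for every $i$, so $\varphi(x)=\varphi(y)$ and hence $x=y$), and then observe that $x\in\bigcap_{i}F_{i}$ iff $(1,x)\in\theta_{i}$ for every $i$ iff $(1,x)\in\bigcap_{i}\theta_{i}=\Delta_{L}$ iff $x=1$; thus $\bigcap_{i}F_{i}=\{1\}$.

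For the converse, given filters $(F_{i})$ satisfying (i) and (ii), I would define $\varphi\colon L\to\prod_{i\in I}L/F_{i}$ by $\varphi(x)=(x/F_{i})_{i\in I}$. Each coordinate is the canonical quotient map, so $\varphi$ is a homomorphism whose every projection is surjective; composing with the isomorphisms $L/F_{i}\cong L_{i}$ yields a homomorphism into $\prod_{i}L_{i}$ with surjective projections. The one remaining point is injectivity of $\varphi$: if $\varphi(x)=\varphi(y)$ then $x/F_{i}=y/F_{i}$ for all $i$, i.e.\ $(x\rightarrow y)\odot(y\rightarrow x)\in F_{i}$ for every $i$, whence $(x\rightarrow y)\odot(y\rightarrow x)\in\bigcap_{i}F_{i}=\{1\}$ by (ii).

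The main obstacle—indeed the only nonroutine computation—is to pass from $(x\rightarrow y)\odot(y\rightarrow x)=1$ to $x=y$. Here I would argue that since every element lies below $1$ and $a\odot b\leq a\wedge b\leq a$ by $(R_{5})$, the equality $(x\rightarrow y)\odot(y\rightarrow x)=1$ forces both $x\rightarrow y=1$ and $y\rightarrow x=1$; then $(R_{2})$ gives $x\leq y$ and $y\leq x$, so $x=y$ and $\varphi$ is an embedding. Combined with the forward direction this establishes the equivalence, the remaining checks (that the displayed maps are well-defined homomorphisms and that the quotient projections are onto) being routine.
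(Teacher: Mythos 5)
Your proposal is correct and takes essentially the approach the paper intends: the paper states Proposition \ref{4.6} without a written proof, remarking only that it is ``a particular case of a theorem of universal algebra,'' i.e.\ the standard subdirect-representation criterion (an embedding into $\prod_i L_i$ with surjective projections corresponds to congruences $\theta_i$ with $\bigcap_i\theta_i=\Delta_L$ and $L/\theta_i\cong L_i$), transported through the filter--congruence isomorphism of Theorem \ref{4.1}. Your write-up just makes that translation explicit, including the one computation the citation hides, that $(x\rightarrow y)\odot(y\rightarrow x)=1$ forces $x=y$ via $(R_{5})$ and $(R_{2})$, so nothing further is needed.
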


Recall that a topological algebra $(L,\tau)$ is \emph{residually finite} \cite{Schneider} if, for any two distinct elements $x,y\in L$, there exists a finite discrete topological algebra $F$ as well as
continuous homomorphism $\varphi: L\rightarrow F$ such that $\varphi(x)\neq\varphi(y)$.
\begin{theorem}\label{3.11}
Let $(L,\tau)$ be a compact topological residuated lattice. Then the following conditions are equivalent:
\begin{enumerate}
  \item[(i)] $(L,\tau)$ is residually finite;
  \item[(ii)] $(L,\tau)$ is profinite;
  \item[(iii)] $(L,\tau)$ is a closed subdirect product of finite discrete residuated lattices.
\end{enumerate}
\end{theorem}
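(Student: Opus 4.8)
The plan is to establish the cycle of implications $(ii)\Rightarrow(i)\Rightarrow(iii)\Rightarrow(ii)$, leaning on the internal characterization of profiniteness from Theorem \ref{3.10}, the subdirect-product criterion of Proposition \ref{4.6}, and three elementary topological facts: the continuous image of a compact space is compact, a compact subset of a Hausdorff space is closed, and a continuous bijection from a compact space onto a Hausdorff space is a homeomorphism. The algebra is light throughout; the real content is converting point-separation into the hypotheses of Theorem \ref{3.10} and Proposition \ref{4.6}, and controlling the relevant maps topologically.

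First I would treat $(ii)\Rightarrow(i)$ and $(i)\Rightarrow(iii)$ together. For $(ii)\Rightarrow(i)$, assuming $(L,\tau)$ profinite, Theorem \ref{3.10} furnishes a family $\mathcal{S}$ of clopen filters, directed under $\supseteq$ and separating points. Given $x\neq y$, pick $F\in\mathcal{S}$ with $x/F\neq y/F$; the quotient map $q_{F}\colon L\to L/F$ is then a continuous homomorphism onto the finite discrete residuated lattice $L/F$ (finiteness coming from compactness, as inside Theorem \ref{3.10}) with $q_{F}(x)\neq q_{F}(y)$, so $(L,\tau)$ is residually finite. For $(i)\Rightarrow(iii)$, for each pair of distinct points fix a separating continuous homomorphism onto a finite discrete residuated lattice, and let $\{\varphi_{i}\colon L\to L_{i}\}_{i\in I}$ be the resulting family, replacing each $L_{i}$ by the image $\varphi_{i}(L)$ so that every $\varphi_{i}$ is surjective. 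Writing $F_{i}=Ker(\varphi_{i})$, the separation property gives $\bigcap_{i\in I}F_{i}=\{1\}$ and $L_{i}\cong L/F_{i}$, whence by Proposition \ref{4.6} the diagonal map $\delta\colon x\mapsto(\varphi_{i}(x))_{i\in I}$ exhibits $L$ as a subdirect product of the finite $L_{i}$. Since each $\varphi_{i}$ is continuous, $\delta$ is continuous into the Hausdorff space $\prod_{i\in I}L_{i}$; as $L$ is compact, $\delta(L)$ is compact, hence closed, and $\delta$ is a continuous injection of a compact space into a Hausdorff space, therefore a homeomorphism onto its closed image. This is exactly assertion $(iii)$.

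For $(iii)\Rightarrow(ii)$, I would identify $(L,\tau)$ with a closed subalgebra of $\prod_{i\in I}L_{i}$ with each $L_{i}$ finite discrete and each projection $\varphi_{i}\colon L\to L_{i}$ continuous and surjective. For every finite $J\subseteq I$ put $F_{J}=\bigcap_{i\in J}Ker(\varphi_{i})$; this is the kernel filter of the continuous map $L\to\prod_{i\in J}L_{i}$, i.e. the preimage of a point of the finite discrete space $\prod_{i\in J}L_{i}$, and is therefore a clopen filter of $L$. The family $\mathcal{S}=\{F_{J}:J\subseteq I \text{ finite}\}$ is directed under $\supseteq$ because $F_{J\cup J'}\subseteq F_{J}\cap F_{J'}$, and it separates points, since if $x\neq y$ they differ in some coordinate $i$ and then $x/F_{\{i\}}\neq y/F_{\{i\}}$. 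Thus $\mathcal{S}$ satisfies conditions $(a)$ and $(b)$ of Theorem \ref{3.10}, and that theorem yields profiniteness.

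The main obstacle is the topological bookkeeping rather than any algebraic difficulty: one must verify that the diagonal and projection maps are genuinely continuous for the subspace/product topologies, that the point-separation hypothesis really does force $\bigcap_{i}F_{i}=\{1\}$ in the filter-theoretic sense demanded by Proposition \ref{4.6}, and, most importantly, that the subdirect embedding in $(i)\Rightarrow(iii)$ has closed image. It is precisely the compactness of $L$ together with the Hausdorffness of the product that upgrades a bare subdirect representation to a \emph{closed} one and a point-separating family of clopen filters to the full hypotheses of Theorem \ref{3.10}; I expect the care needed to match the concrete embedding in $(iii)$ with the topology $\tau$ (rather than merely an abstract isomorphism) to be the most delicate point.
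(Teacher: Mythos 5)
Your proof is correct, but it is organized differently from the paper's. The paper proves the pairwise equivalences $(i)\Leftrightarrow(iii)$ and $(ii)\Leftrightarrow(iii)$, and for the hard direction $(iii)\Rightarrow(ii)$ it builds the inverse limit from scratch: it takes the poset $\mathcal{S}$ of finite subsets $J\subseteq I$, forms the projections $L\upharpoonright_{J}$, sets $Y=\varprojlim\{L\upharpoonright_{J}:J\in\mathcal{S}\}$, and verifies by hand that $\psi(x)(J)=x\upharpoonright_{J}$ is an injective, continuous, surjective homomorphism, with surjectivity coming from compactness and the finite intersection property of the sets $V_{J}=\{x\in L:x\upharpoonright_{J}=y\upharpoonright_{J}\}$. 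You instead run the cycle $(ii)\Rightarrow(i)\Rightarrow(iii)\Rightarrow(ii)$ and delegate all of this to Theorem \ref{3.10}, feeding it the directed separating family $F_{J}=\bigcap_{i\in J}Ker(\varphi_{i})$ of clopen filters; this is essentially the same combinatorics over finite subsets of $I$, but outsourced to machinery the paper has already established, which makes your $(iii)\Rightarrow(ii)$ much shorter. Your version of $(i)\Rightarrow(iii)$ is also an improvement in rigor at one point: the paper justifies closedness of the subdirect image by citing Lemma \ref{3.8a}$(ii)$, which is about inverse limits being closed in products and does not literally apply to the diagonal image of $L$; your argument (continuous image of the compact $L$ in the Hausdorff product is compact, hence closed, and a continuous bijection from a compact space to a Hausdorff space is a homeomorphism) both repairs that citation and makes explicit the topological identification of $(L,\tau)$ with its image, which the paper leaves implicit. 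Incidentally, your cycle also sidesteps a labeling slip in the paper, whose sentence ``since $(iii)$ implies $(ii)$ is obvious, it suffices to show the converse'' mismatches the direction actually proved. The trade-off is that the paper's proof is self-contained within Theorem \ref{3.11}, while yours leans on Theorem \ref{3.10}'s unproved-in-detail claim that $\psi$ is a topological (not merely algebraic) isomorphism; since you inherit exactly what the paper asserts there, this is a legitimate dependence rather than a gap.
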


\begin{proof}
First, let us prove that $(i)$ implies $(iii)$. For $x,y\in L$ with $x\neq y$. Then there exist a finite residuated lattice $F$ and a homomorphism $\varphi_{x,y}:L\rightarrow F$ such that
$\varphi_{x,y}(x)\neq \varphi_{x,y}(y)$. It follows that $Ker(\varphi_{x,y})$ is a finite index clopen filter of $(L,\tau)$, for each $x,y\in L$ with $x\neq y$. It is easy to check that $L$ is a subdiect product of
$\{L/Ker(\varphi_{x,y}):x,y\in L ~and~ x\neq y\}$. By Lemma \ref{3.8a} $(ii)$, it deduces that $(iii)$ holds. Now assume that the condition $(iii)$ holds. Let $L$ be a closed subdirect product of finite residuated lattices
$\{L_{i}: i\in I\}$. By Proposition \ref{4.6}, there exists a family $\{F_{i}:i\in I\}$ of filters of $L$ such that $\bigcap_{i\in I}F_{i}=\{1\}$ and $L_{i}\cong L/F_{i}$ for each $i\in I$. Thus $(L,\tau)$ is closed
in $\prod_{i\in I}L/F_{i}$, and hence $(L,\tau)$ is residually finite.

Since $(iii)$ implies $(ii)$ is obvious, it suffices to show the converse. Let $L$ be a topological closed subalgebra of a product
$\Pi_{i\in I}L_{i}$, where $L_{i}$ is a finite residuated lattice for each $i\in I$. Let $\mathcal{S}$ be the set of all finite non-empty subsets of $I$, ordered by inclusion. For each $J\in\mathcal{S}$, let
$L\upharpoonright_{J}:L\rightarrow \prod_{j\in J}L_{j}$ be the projection. For all $K\supseteq J$ in $\mathcal{S}$, let $\varphi_{KJ}:L\upharpoonright_{K}\rightarrow L\upharpoonright_{J}$ be the projection. Then
$\{L\upharpoonright_{J}:J\in\mathcal{S}\}$ forms an inverse system of topological residuated lattices. Define the inverse limit $Y=\underleftarrow{\lim}\{L\upharpoonright_{J}:J\in\mathcal{S}\}$. We want to define an
isomorphism $\psi:L\rightarrow Y$ by $\psi(x)(J)=x\upharpoonright_{J}$. For each $x\in L$, we have $\psi(x)\in Y$, since $\varphi_{KJ}((\psi(x))(K))=\varphi_{KJ}(x\upharpoonright_{K})=x\upharpoonright_{J}=\psi(x)(J)$,
for all $K\supset J$ in $\mathcal{S}$. The map $\psi$ is a homomorphism, since the projections are homomorphisms. It is easy to check that $\psi$ is injective. To see that $\psi$ is continuous, consider a subbasic open
set $U=\{y\in Y:y(Y)(j)\in V\}$ of $Y$, where $j\in J\in\mathcal{S}$ and $V$ is open in $L_{j}$. We have $\psi^{-1}(U)=\{x\in L:x(j)\in V\}$, which is open in $L$. It remains to verify that $\psi$ is surjective. Let
$y\in Y$. For each $J\in \mathcal{S}$, define the closed set $V_{J}=\{x\in L:x\upharpoonright_{J}=y\upharpoonright_{J}\}$ in $L$. Since $y\in Y$, we know that $y(J)\in L\upharpoonright_{J}$ and
therefore $V_{J}\neq\emptyset$, for each $J\in\mathcal{S}$. The set $\{V_{J}:J\in\mathcal{S}\}$ is closed under the finite intersection, since
$V_{J_{1}}\cap V_{J_{2}}\cap \cdots \cap V_{J_{n}}=V_{J_{1}\cup \cdots \cup J_{n}}\neq\emptyset$. So, as $L$ is compact, we can choose $x\in\bigcap\{V_{J}:J\in\mathcal{S}\}$. For each $J\in \mathcal{S}$, we have
$\psi(x)(J)=x\upharpoonright_{J}=y(J)$. Thus $\psi(x)=y$.
\end{proof}


\section{Finiteness conditions in residuated lattices}

The elementary theory of vector spaces concerns itself with spaces of finite dimension, in such spaces there does not exist an infinite strictly ascending or strictly descending chain of subspaces. Similarly, the elementary theory of groups concerns itself with finite groups, or at any rate with groups which are not ``too infinite" in some sense. The purposes of this section is the discussion of various finiteness conditions which can be imposed on a residuated lattice.\\

So far we have considered a quite arbitrary residuated lattice. To go further, however, and obtain deeper theorems we need to impose some conditions. The most convenient way is in the form of ``chain conditions".
\begin{definition}\label{4.2} \emph{Let $L$ be a residuated lattice. We call $L$ satisfying the \emph{descending chain condition }($DCC$ for short), if every strictly descending chain of filters}
$$\cdots \subset F_{2}\subset F_{1} $$
\emph{is finite.}

\emph{An equivalent formulation is: if}
$$\cdots \subset F_{2}\subset F_{1} $$
\emph{is an infinite descending sequence of filters, then there exists an integer $n$ such that}
$$F_{i}=F_{n},for~ i=n+1,n+2\cdots.$$
\end{definition}

\begin{definition}\label{4.3}
\emph{A residuated lattice is said to satisfy the \emph{minimal condition} if every non-empty collection $\mathcal{C}$ of filters has a minimal element, that is, if there exists a filter in $\mathcal{C}$ which is not contained in any other filter in the collection $\mathcal{C}$.}
\end{definition}

\begin{proposition}\label{4.4}
Let $L$ be a residuated lattice. Then the following assertions are equivalent:
\begin{itemize}
\item[\rm (1)] $L$ satisfies the descending chain condition;
\item[\rm (2)] $L$ satisfies the minimal condition;
\item[\rm (3)] each system of filters of $L$ has a minimum element.
\end{itemize}
\end{proposition}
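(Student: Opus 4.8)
The plan is to prove the three equivalences in the standard cyclic fashion $(1)\Rightarrow(2)\Rightarrow(3)\Rightarrow(1)$, exploiting the fact that the descending chain condition and the minimal condition are formally dual to the more familiar ascending chain condition and maximal condition, so the argument is the order-theoretic mirror of the classical Noetherian/Artinian equivalence for modules. Throughout I would work purely with the poset $(\mathcal{F}(L),\subseteq)$ of filters ordered by inclusion, since all three statements are about filters; no use of the residuated-lattice operations is needed beyond knowing that filters form a set closed under the relevant constructions.

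First I would show $(1)\Rightarrow(2)$ by contraposition. Suppose $L$ fails the minimal condition, so there is a non-empty collection $\mathcal{C}$ of filters with no minimal element. Then starting from any $F_{1}\in\mathcal{C}$, the absence of a minimal element lets me repeatedly choose $F_{n+1}\in\mathcal{C}$ with $F_{n+1}\subsetneq F_{n}$, producing an infinite strictly descending chain $\cdots\subset F_{2}\subset F_{1}$, contradicting $DCC$. I should flag that this step uses the axiom of (dependent) choice, which is standard and customarily left implicit. Next, $(2)\Rightarrow(3)$: given a system of filters, i.e.\ a down-directed subset $(\mathcal{F},\subseteq)$ of $\mathcal{F}(L)$, apply the minimal condition to the non-empty collection $\mathcal{F}$ to obtain a minimal element $M\in\mathcal{F}$. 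The key point is to upgrade ``minimal'' to ``minimum'': for any $F\in\mathcal{F}$, down-directedness gives some $G\in\mathcal{F}$ with $G\subseteq M$ and $G\subseteq F$; minimality of $M$ forces $G=M$, hence $M\subseteq F$. So $M$ is the required minimum element.

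Finally, for $(3)\Rightarrow(1)$, I would take any infinite descending sequence of filters $\cdots\subset F_{2}\subset F_{1}$ and observe that the set $\mathcal{F}=\{F_{n}:n\geq 1\}$ is totally ordered by reverse inclusion, hence trivially down-directed, so it is a system of filters. By hypothesis $(3)$ it has a minimum element $F_{n}$; then for every $i\geq n$ we have $F_{n}\subseteq F_{i}$, while the chain already gives $F_{i}\subseteq F_{n}$, so $F_{i}=F_{n}$ for all $i\geq n$, which is exactly the stabilization formulation of $DCC$ from Definition \ref{4.2}.

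I do not anticipate a genuinely hard step; the proof is routine order theory. The one place requiring a little care is the ``minimal versus minimum'' distinction in $(2)\Rightarrow(3)$, where down-directedness of the system is precisely the hypothesis that converts the existence of a minimal element into an actual least element, and the place where I must be explicit that a chain is down-directed in $(3)\Rightarrow(1)$. The only conceptual subtlety worth a remark is the reliance on dependent choice in $(1)\Rightarrow(2)$, which I would mention but not belabor.
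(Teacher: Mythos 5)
Your proof is correct and follows essentially the same route as the paper: the paper dismisses the equivalence of (1) and (2) as ``the standard method'' (your contraposition argument via dependent choice), proves $(2)\Rightarrow(3)$ by the same minimal-plus-down-directedness upgrade, and notes $(3)\Rightarrow(1)$ via the observation that every chain is a system of filters. Your write-up merely makes explicit the details the paper leaves implicit, in particular the step where down-directedness produces $G\in\mathcal{F}$ with $G\subseteq M$ forcing $G=M$.
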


\begin{proof}
The equivalence of conditions (1) and (2) is the standard method. Now, suppose that the condition (2) is true. Let $(\mathcal{F},\subseteq)$ be a system of filters. It must have a minimal filter $F$ in $\mathcal{F}$. For any another filter $G$ in $\mathcal{F}$, since $(\mathcal{F},\subseteq)$ is down-directed, we have $F\subseteq G$. Thus $F$ is a minimum filter in $(\mathcal{F},\subseteq)$. It is valid that condition (3) implies condition (1) , since every chain of filters is a system of filters.
\end{proof}




\begin{theorem}\label{4.7}
Let $L$ be a residuated lattice and $\mathcal{F}=\{F_{i}: i\in I\}$ be a system of filters of $L$. Then the following statements are equivalent:
\begin{itemize}
\item[\rm (i)] $(L, \mathcal{T}_{F})$ is a Hausdorff space;
\item[\rm (ii)] $(L, \mathcal{T}_{F})$ is a $T_{1}$-space;
\item[\rm (iii)] $(L, \mathcal{T}_{F})$ is a $T_{0}$-space;
\item[\rm (iv)] $\bigcap_{i\in I}F_{i}=\{1\}$;
\item[\rm (v)] $L$ is a subdirect product of a family $L/F_{i}$ of residuated lattices.
\end{itemize}
\end{theorem}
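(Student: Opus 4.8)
The plan is to establish the cycle of implications $(i)\Rightarrow(ii)\Rightarrow(iii)\Rightarrow(iv)\Rightarrow(i)$ together with the equivalence $(iv)\Leftrightarrow(v)$, which jointly yield all five equivalences. Before starting, I would record a few facts that make the neighborhood structure of $\mathcal{T}_{\mathcal{F}}$ explicit. First, each class $x/F$ with $F\in\mathcal{F}$ is itself $\mathcal{T}_{\mathcal{F}}$-open, since $y\in x/F$ forces $y/F=x/F$; hence $\{x/F:F\in\mathcal{F}\}$ is a base of open neighborhoods of $x$, and down-directedness of $\mathcal{F}$ (if $H\subseteq F$ and $H\subseteq G$ then $\theta_{H}\subseteq\theta_{F},\theta_{G}$, so $x/H\subseteq x/F\cap x/G$) guarantees it behaves as a genuine filter base. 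Second, using $(R_{1})$ one computes $1/F=F$, so the basic neighborhoods of $1$ are exactly the filters $F\in\mathcal{F}$. Third, $y\in x/F$ holds precisely when $(x\rightarrow y)\odot(y\rightarrow x)\in F$.

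The implications $(i)\Rightarrow(ii)\Rightarrow(iii)$ are the standard chain (Hausdorff $\Rightarrow T_{1}\Rightarrow T_{0}$) valid in any topological space, so no work is needed there. The equivalence $(iv)\Leftrightarrow(v)$ is immediate from Proposition \ref{4.6}: taking $L_{i}=L/F_{i}$, condition $(v)$ says $L$ is a subdirect product of the $L/F_{i}$, which by that proposition is equivalent to $\bigcap_{i\in I}F_{i}=\{1\}$, i.e.\ condition $(iv)$.

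For $(iii)\Rightarrow(iv)$ I would argue by contraposition. Since every filter contains $1$, we always have $\{1\}\subseteq\bigcap_{i}F_{i}$; suppose this inclusion is strict and pick $x\in\bigcap_{i}F_{i}$ with $x\neq 1$. Then $x\in F_{i}=1/F_{i}$ for every $i$, so $x/F_{i}=1/F_{i}$ for all $i$. I claim $x$ and $1$ are topologically indistinguishable: if an open $U$ contains $1$, then some basic neighborhood $F_{i}=1/F_{i}\subseteq U$, and $x\in F_{i}$ gives $x\in U$; symmetrically, if $U$ contains $x$, then some $x/F_{i}=1/F_{i}\subseteq U$ forces $1\in U$. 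Thus no open set separates $x$ from $1$, contradicting $(iii)$, so $(iv)$ holds. For the closing implication $(iv)\Rightarrow(i)$, take distinct $x,y\in L$. Then $(x\rightarrow y)\odot(y\rightarrow x)\neq 1$, for otherwise $(R_{5})$ would force $x\rightarrow y=y\rightarrow x=1$ and hence $x=y$ by $(R_{2})$. Since $\bigcap_{i}F_{i}=\{1\}$, there is $F_{i}\in\mathcal{F}$ with $(x\rightarrow y)\odot(y\rightarrow x)\notin F_{i}$, that is $x/F_{i}\neq y/F_{i}$. As distinct equivalence classes are disjoint and each $x/F_{i}$ is open, the sets $x/F_{i}$ and $y/F_{i}$ are disjoint open neighborhoods of $x$ and $y$, giving Hausdorffness.

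The only delicate points, and hence where I expect the main obstacle, are the two algebraic computations underpinning the separation arguments: verifying cleanly that $1/F=F$ and that $a\odot b=1$ forces $a=b=1$ (the latter from $a\odot b\leq a\wedge b$ in $(R_{5})$ together with $1$ being the top element). Everything else reduces to the observation that, for fixed $F$, the classes $x/F$ form a clopen partition of $L$; once that neighborhood description is in place, the separation axioms translate directly into membership statements in the filters $F_{i}$, and the equivalence becomes transparent.
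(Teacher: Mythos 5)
Your proof is correct and follows essentially the same route as the paper: the paper disposes of the equivalences (i)--(iv) by citing the analogous Proposition 3.20 of \cite{Yang2} and of (iv)$\Leftrightarrow$(v) via Proposition \ref{4.6}, and your direct verification --- openness of each class $x/F_{i}$, the computation $1/F=F$ via $(R_{1})$, the contrapositive argument that an element $x\neq 1$ in $\bigcap_{i}F_{i}$ is topologically indistinguishable from $1$, and separating distinct points by the disjoint open classes $x/F_{i}$, $y/F_{i}$ once $(x\rightarrow y)\odot(y\rightarrow x)\notin F_{i}$ --- is precisely the standard argument the citation refers to, here written out in full. The one step the paper makes explicit, reducing (iv)$\Leftrightarrow$(v) to Proposition \ref{4.6}, you handle identically.
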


\begin{proof}
The equivalences of (i)-(iv) are similar to Proposition 3.20 in \cite{Yang2}. The equivalence of (iv) and (v) follows from Proposition \ref{4.6}.
\end{proof}

A filter $F$ of a residuated lattice $L$ is said to be \emph{prime} if $x\vee y\in F$ implies $x\in F$ or $y\in F$ for all $x,y\in L$, see \cite{Kondo}. We denote the set of all prime filters of a residuated lattice $L$ by $\mathcal{P}(L)$.

\begin{theorem}\label{4.8} \cite{Hohle}
Let $L$ be a residuated lattice. For every element $a\in L$ with $a\neq 1$ there is a prime filter $P$ in $L$ with $a\notin P$.
\end{theorem}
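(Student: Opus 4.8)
The plan is to run the classical maximal-filter (Zorn) separation argument, producing a filter maximal among those that omit $a$ and then showing maximality forces primeness. First I would consider the collection
$$\Sigma=\{F\in\mathcal{F}(L): a\notin F\},$$
partially ordered by inclusion. This collection is nonempty: since $a\neq 1$ and $1$ is the top element of $L$ (by $(R_{2})$, $x\leq 1$ for all $x$), the singleton $\{1\}$ is a filter omitting $a$, so $\{1\}\in\Sigma$. If $\mathcal{C}\subseteq\Sigma$ is a chain, then $\bigcup\mathcal{C}$ is again a filter (an increasing union of filters is closed under $\odot$ and upward closed) and still omits $a$, hence is an upper bound of $\mathcal{C}$ in $\Sigma$. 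By Zorn's lemma $\Sigma$ has a maximal element $P$, a filter with $a\notin P$; it remains to prove $P$ is prime.

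Next I would argue by contradiction: suppose $x\vee y\in P$ but $x\notin P$ and $y\notin P$. Since $P$ is maximal in $\Sigma$, the filter $\langle P\cup\{x\}\rangle$ generated by $P$ together with $x$ strictly contains $P$, so it no longer lies in $\Sigma$ and therefore $a\in\langle P\cup\{x\}\rangle$; likewise $a\in\langle P\cup\{y\}\rangle$. Using the explicit description of a generated filter in a residuated lattice, namely $\langle P\cup\{x\}\rangle=\{z\in L:\ z\geq p\odot x^{m}\ \text{for some }p\in P,\ m\geq 0\}$ (this set is easily checked to be the smallest filter containing $P$ and $x$), this yields $p,q\in P$ and integers $m,n\geq 0$ with $a\geq p\odot x^{m}$ and $a\geq q\odot y^{n}$.

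The crux is then to recombine these two inequalities into a single element of $P$ below $a$. Put $r=p\odot q\in P$; by $(R_{5})$ we have $r\leq p$ and $r\leq q$, so by $(R_{3})$ both $r\odot x^{m}\leq a$ and $r\odot y^{n}\leq a$, whence $a=a\vee a\geq (r\odot x^{m})\vee(r\odot y^{n})$. Using that $\odot$ distributes over $\vee$ in any residuated lattice (the nontrivial direction follows from residuation, condition (3) of Definition \ref{2.1a}, as $r\odot-$ is a left adjoint and hence preserves joins) together with the inequality $x^{m}\vee y^{n}\geq(x\vee y)^{mn}$ supplied by $(R_{7})$, I would estimate
\[
(r\odot x^{m})\vee(r\odot y^{n})=r\odot(x^{m}\vee y^{n})\geq r\odot(x\vee y)^{mn}.
\]
Hence $a\geq r\odot(x\vee y)^{mn}$. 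Since $r\in P$ and $x\vee y\in P$, the element $r\odot(x\vee y)^{mn}$ lies in $P$, and as $P$ is upward closed this forces $a\in P$, contradicting $a\notin P$. Therefore $P$ is prime and is the desired prime filter with $a\notin P$.

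I expect the main obstacle to be precisely the displayed inequality: recognizing that $(R_{7})$ converts the separate powers $x^{m}$ and $y^{n}$ into a power of $x\vee y$, and combining this with the left-distributivity of $\odot$ over $\vee$, so that the witnesses coming from $x$ and $y$ fuse into one element of $P$ dominated by $a$. Once this computation is secured, the Zorn argument and the generated-filter bookkeeping are entirely routine.
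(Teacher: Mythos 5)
Your proof is correct: the Zorn's lemma construction of a filter maximal among those omitting $a$, the description of the generated filter $\langle P\cup\{x\}\rangle$, and the recombination step using distributivity of $\odot$ over $\vee$ together with $(R_{7})$ are all valid (note that the degenerate cases $m=0$ or $n=0$ cause no harm, since then $a\geq r\in P$ yields the contradiction directly). The paper itself gives no proof of this statement --- it is quoted from H\"{o}hle's work --- and your argument is precisely the standard maximal-filter separation proof underlying that citation, so there is nothing to reconcile between the two.
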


\begin{corollary}\label{4.9}\cite{Hohle}
Every proper filter of a residuated lattice is an intersection of prime filters.
\end{corollary}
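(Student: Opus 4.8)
The plan is to deduce the corollary from Theorem~\ref{4.8} by relativizing the construction of prime filters to the quotient by $F$. Fix a proper filter $F$ of $L$, and let $\mathcal{P}_{F}=\{P\in\mathcal{P}(L):F\subseteq P\}$ denote the set of prime filters containing $F$. The inclusion $F\subseteq\bigcap\mathcal{P}_{F}$ is immediate, since every member of $\mathcal{P}_{F}$ contains $F$ by definition. The whole content of the statement therefore lies in the reverse inclusion $\bigcap\mathcal{P}_{F}\subseteq F$, equivalently: for every $a\in L$ with $a\notin F$ there is a prime filter $P\supseteq F$ with $a\notin P$.

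First I would pass to the quotient residuated lattice $L/F$ with canonical surjection $\pi:L\rightarrow L/F$, $\pi(x)=x/F$. The key observation is that $a\notin F$ forces $a/F\neq 1$ in $L/F$: by the description of the congruence $\theta_{F}$ we have $a/F=1/F$ iff $(a\rightarrow 1)\odot(1\rightarrow a)\in F$, and by $(R_{1})$ this element equals $1\odot a=a$, so $a/F=1$ would give $a\in F$, a contradiction. Applying Theorem~\ref{4.8} to the residuated lattice $L/F$ and the element $a/F\neq 1$ then yields a prime filter $\overline{P}$ of $L/F$ with $a/F\notin\overline{P}$.

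Next I would pull $\overline{P}$ back along $\pi$ and set $P=\pi^{-1}(\overline{P})$. Then $P$ is a filter of $L$ (the preimage of a filter under a homomorphism), it contains $F=\pi^{-1}(\{1\})$ since $1\in\overline{P}$, and $a\notin P$ because $\pi(a)=a/F\notin\overline{P}$. It remains to verify that $P$ is prime: if $x\vee y\in P$, then $\pi(x)\vee\pi(y)=\pi(x\vee y)\in\overline{P}$, where I use that $\pi$ preserves $\vee$; by primeness of $\overline{P}$ either $\pi(x)\in\overline{P}$ or $\pi(y)\in\overline{P}$, i.e.\ $x\in P$ or $y\in P$. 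Hence $P\in\mathcal{P}_{F}$ and $a\notin P$, which establishes the reverse inclusion and therefore the equality $F=\bigcap\mathcal{P}_{F}$.

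The only point requiring care --- and the step I expect to be the main (and only) obstacle --- is the correspondence between prime filters of $L/F$ and prime filters of $L$ containing $F$: concretely, that $P=\pi^{-1}(\overline{P})$ is again \emph{prime} rather than merely a filter. This is exactly the preservation-of-primeness computation sketched above, and it works precisely because $\pi$ is a surjective residuated lattice homomorphism, so joins are reflected correctly. Everything else reduces to the trivial inclusion together with the translation $a\notin F\Leftrightarrow a/F\neq 1$.
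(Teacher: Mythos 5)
Your proof is correct. There is, however, no in-paper argument to compare it against: the paper states Corollary~\ref{4.9} (like Theorem~\ref{4.8}) as a quoted result of H\"ohle, with no proof supplied. Judged on its own, your reduction is complete and the details check out: the computation that $a/F=1/F$ iff $(a\rightarrow 1)\odot(1\rightarrow a)=1\odot a=a\in F$ is exactly right by $(R_{1})$, the quotient $L/F$ is a residuated lattice to which Theorem~\ref{4.8} applies, and you correctly identify the only nontrivial step, namely that $P=\pi^{-1}(\overline{P})$ is prime, which holds because $\pi$ preserves $\vee$. Two small remarks. First, surjectivity of $\pi$ is not actually needed for this step: the preimage of a prime filter under \emph{any} residuated lattice homomorphism is prime, since joins are preserved, not merely reflected; surjectivity plays no role in your computation. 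Second, it is worth one sentence that $\mathcal{P}_{F}$ is nonempty exactly because $F$ is proper (take $a=0\notin F$), so that $\bigcap\mathcal{P}_{F}$ is a genuine intersection of prime filters. For comparison, the classical route (presumably the source's) avoids quotients altogether: given $a\notin F$, Zorn's lemma yields a filter $P\supseteq F$ maximal with respect to $a\notin P$, and primeness of $P$ is proved directly using $(R_{7})$ of Proposition~\ref{2.5a}: if $x\vee y\in P$ with $x,y\notin P$, then $a$ lies in the filters generated by $P\cup\{x\}$ and by $P\cup\{y\}$, so $a\geq p_{1}\odot x^{m}$ and $a\geq p_{2}\odot y^{n}$ for some $p_{1},p_{2}\in P$, whence $a\geq p_{1}\odot p_{2}\odot(x^{m}\vee y^{n})\geq p_{1}\odot p_{2}\odot(x\vee y)^{mn}\in P$, a contradiction. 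The direct argument is self-contained but repeats the Zorn machinery; your version buys economy by using Theorem~\ref{4.8} as a black box together with the filter correspondence for the quotient, at the modest cost of the (routine) primeness-of-preimage verification.
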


\begin{corollary}\label{4.10}\cite{Hohle}
Let $L$ be a residuated lattice. Then $\bigcap\mathcal{P}(L)=\{1\}$.
\end{corollary}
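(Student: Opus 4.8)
The plan is to prove the two inclusions $\{1\}\subseteq\bigcap\mathcal{P}(L)$ and $\bigcap\mathcal{P}(L)\subseteq\{1\}$ separately, with essentially all of the analytic content already packaged into Theorem \ref{4.8}. First I would dispose of the easy inclusion. Every filter $F$ is by Definition \ref{2.6a} nonempty and upward closed, and since $1$ is the greatest element of the bounded lattice $(L,\wedge,\vee,0,1)$, any $x\in F$ satisfies $x\leq 1$, whence $1\in F$. In particular $1$ lies in every prime filter $P\in\mathcal{P}(L)$, so $1\in\bigcap\mathcal{P}(L)$, i.e. $\{1\}\subseteq\bigcap\mathcal{P}(L)$.

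For the reverse inclusion I would argue by contraposition, which is exactly where Theorem \ref{4.8} does the work. Let $a\in L$ with $a\neq 1$. By Theorem \ref{4.8} there exists a prime filter $P\in\mathcal{P}(L)$ with $a\notin P$, and therefore $a\notin\bigcap\mathcal{P}(L)$. Taking the contrapositive, every element of $\bigcap\mathcal{P}(L)$ must equal $1$, so $\bigcap\mathcal{P}(L)\subseteq\{1\}$. Combining the two inclusions gives $\bigcap\mathcal{P}(L)=\{1\}$.

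As an alternative derivation one may bypass the pointwise argument and invoke Corollary \ref{4.9} instead: provided $L$ is nontrivial, the singleton $\{1\}$ is a proper filter, hence an intersection of some subfamily of prime filters, and this already forces $\bigcap\mathcal{P}(L)\subseteq\{1\}$; the reverse inclusion is the same as above. I do not expect any genuine obstacle here, since the existence statement in Theorem \ref{4.8} (equivalently, the representation in Corollary \ref{4.9}) is precisely the nontrivial ingredient and has been supplied. The only point deserving a moment's care is the degenerate one-element lattice $L=\{1\}$, where there are no elements $a\neq 1$ and the second inclusion holds vacuously, so the conclusion persists.
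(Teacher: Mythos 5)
Your proof is correct and follows exactly the route the paper intends: Corollary \ref{4.10} is stated as an immediate consequence of Theorem \ref{4.8} (every $a\neq 1$ is avoided by some prime filter), combined with the trivial observation that $1$ belongs to every filter since filters are upward closed. Your alternative via Corollary \ref{4.9} and your remark on the degenerate one-element case are also sound, so there is nothing to correct.
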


Given a system $\mathcal{F}=\{F_{i}: i\in I\}$ of filters of a residuated lattice $L$. It is easily checked that the following assertions hold:
\begin{itemize}
\item[\rm (i)] $(L, \mathcal{T}_{F})$ is a discrete space if and only if $\{1\}\in \mathcal{F}$;
\item[\rm (ii)] $(L, \mathcal{T}_{F})$ is an anti-discrete space if and only if $\{L\}=\mathcal{F}$.
\end{itemize}



We call $\mathcal{F}(L)\backslash\{1\}$ is a \emph{global system of filters} if it is a system of filters, i.e., $(\mathcal{F}(L)\backslash\{1\},\subseteq)$ is closed under finite intersections. Recall that a congruence $\theta$ on an algebra is a \emph{factor congruence} if there is a congruence $\theta^{*}$ such that $\theta\cap\theta^{*}=\triangle$, $\theta\cup\theta^{*}=\nabla$ and $\theta$ permutes with $\theta^{*}$, see \cite{Burris} for details. An algebra is \emph{directly indecomposable} if it is not isomorphic to a direct product of two nontrivial algebras.
\begin{proposition}\label{4.11}
Let $L$ be a residuated lattice. Then the following conditions are equivalent:
\begin{enumerate}
  \item[(1)] $\mathcal{F}(L)\backslash\{1\}$ is a global system of filters;
  \item[(2)] any finite intersections of $\mathcal{F}(L)\backslash\{1\}$ is not equal to $\{1\}$;
  \item[(3)] there is no non-trivial factor congruence on $L$;
  \item[(4)] $L$ is directly indecomposable.
\end{enumerate}
\end{proposition}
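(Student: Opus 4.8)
The plan is to route everything through the lattice isomorphism $\mathcal{F}(L)\cong Con(L)$ of Theorem \ref{4.1}, together with the fact that $\mathcal{RL}$ is congruence permutable, so that for congruences the join $\theta\vee\theta'$ coincides with the relational composite $\theta\circ\theta'$. First I would dispatch the two cheap equivalences. For $(1)\Leftrightarrow(2)$: the intersection of two filters is again a filter, so the only way a finite meet of members of $\mathcal{F}(L)\backslash\{1\}$ can leave $\mathcal{F}(L)\backslash\{1\}$ is by equalling $\{1\}$; since any common lower bound of $F$ and $G$ lies below $F\cap G$, down-directedness of $(\mathcal{F}(L)\backslash\{1\},\subseteq)$ is literally the assertion that no finite intersection of non-trivial filters collapses to $\{1\}$. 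For $(3)\Leftrightarrow(4)$ I would simply invoke the standard universal-algebra fact (see \cite{Burris}) that an algebra is directly indecomposable exactly when its only factor congruences are $\triangle$ and $\nabla$, translating a decomposition $L\cong L/\theta\times L/\theta^{*}$ into the complementary pair $(\theta,\theta^{*})$ and back.

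The crux is $(2)\Leftrightarrow(3)$. Using permutability, a factor congruence is precisely a complemented element of $Con(L)$, i.e. one admitting $\theta^{*}$ with $\theta\cap\theta^{*}=\triangle$ and $\theta\vee\theta^{*}=\nabla$; under the isomorphism this corresponds to a complementary pair of filters $F,F^{*}$ with $F\cap F^{*}=\{1\}$ and $F\vee F^{*}=L$. For the contrapositive of $(2)\Rightarrow(3)$ I would take non-trivial $F,G$ with $F\cap G=\{1\}$ (reducing a long collapsing intersection $F_{1}\cap\cdots\cap F_{n}=\{1\}$ to such a pair by taking $n$ minimal and setting $G=F_{2}\cap\cdots\cap F_{n}$), and try to manufacture from them a non-trivial factor congruence.

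This manufacturing step is exactly where I expect the argument to break down, and it is the main obstacle. Having $F\cap G=\{1\}$ yields $\theta_{F}\cap\theta_{G}=\triangle$, but a factor pair additionally demands $\theta_{F}\vee\theta_{G}=\nabla$, that is $F\vee G=L$; since $F\vee G$ is generated by $F\cup G$, this means some $f\in F$, $g\in G$ satisfy $f\odot g=0$, which a merely disjoint pair need not provide. Testing the claim on small examples confirms the gap is fatal: in the five-element subalgebra $L=\{(0,0),(c,c),(1,c),(c,1),(1,1)\}$ of the square of the three-element G\"odel chain (where $\odot=\wedge$), the filters $F=\{(1,c),(1,1)\}$ and $G=\{(c,1),(1,1)\}$ are non-trivial with $F\cap G=\{1\}$ (so $(1)$ and $(2)$ fail), yet $F\vee G=\{(c,c),(1,c),(c,1),(1,1)\}\neq L$, no filter admits a complement, and because $|L|=5$ is prime $L$ cannot be a product of two non-trivial factors, so $L$ is directly indecomposable and $(3)$, $(4)$ hold. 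Hence $(2)\Leftrightarrow(3)$ is false as stated. My recommendation would be to repair $(1)$ and $(2)$ by requiring a \emph{complementary} pair of filters, i.e. both $F\cap G=\{1\}$ \emph{and} $F\vee G=L$; with that correction the equivalence with $(3)$ and $(4)$ follows cleanly from the translation above.
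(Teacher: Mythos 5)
Your handling of (1)$\Leftrightarrow$(2) and of (3)$\Leftrightarrow$(4) matches the paper, which merely cites Corollary 7.7 of Burris--Sankappanavar for the latter and dismisses the equivalence of (1)--(3) as ``easy to see.'' The substantive part of your proposal is the claim that (2)$\Leftrightarrow$(3) fails, and you are correct: only (1)$\Leftrightarrow$(2)$\Rightarrow$(3)$\Leftrightarrow$(4) hold, and the implication (3)$\Rightarrow$(2) is false, so the proposition as stated is wrong and the paper's one-line proof breaks exactly at the clause ``it is easy to see.'' I checked your counterexample in detail. The set $L=\{(0,0),(c,c),(1,c),(c,1),(1,1)\}$ is closed under the componentwise G\"odel operations (the only nonobvious cases are $(1,c)\rightarrow(c,c)=(c,1)$ and $(c,1)\rightarrow(c,c)=(1,c)$, both in $L$; and $\odot=\wedge$ here), so it is a residuated lattice. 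Its filters are exactly $\{1\}$, $F=\uparrow(1,c)$, $G=\uparrow(c,1)$, $\uparrow(c,c)$ and $L$; since $F\cap G=\{1\}$ with $F,G$ non-trivial, conditions (1) and (2) fail. Yet $|L|=5$ is prime, so $L$ is directly indecomposable and (4), hence (3), hold; equivalently, under the isomorphism $\mathcal{F}(L)\cong Con(L)$ of Theorem 2.4, the only disjoint pair $F,G$ has $F\vee G=\uparrow(c,c)\neq L$, so no complementary congruence pair exists. Your diagnosis of the underlying error is also the right one: $F\cap G=\{1\}$ yields $\theta_{F}\cap\theta_{G}=\triangle$, but a factor pair additionally requires $\theta_{F}\vee\theta_{G}=\nabla$, i.e.\ $f\odot g=0$ for some $f\in F$, $g\in G$, and a merely disjoint pair need not supply this.

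Your proposed repair is sound: replacing (1)/(2) by the existence (or rather non-existence) of a \emph{complementary} pair of non-trivial filters, $F\cap G=\{1\}$ and $F\vee G=L$, restores the equivalence with (3) and (4) immediately, via the filter--congruence isomorphism together with congruence permutability of $\mathcal{RL}$ (which makes $\theta\vee\theta^{*}=\theta\circ\theta^{*}$, so permutability in the definition of factor congruence is automatic). Two smaller remarks. First, the step you label ``the contrapositive of (2)$\Rightarrow$(3)'' (deriving $\neg(3)$ from $\neg(2)$) is in fact the contrapositive of (3)$\Rightarrow$(2) --- which is precisely the direction that fails; the genuine implication (2)$\Rightarrow$(3) is unproblematic, since a non-trivial factor pair forces both associated filters to be non-trivial and disjoint. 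Second, the error propagates in the paper: your example is finite, hence satisfies $DCC$, and shows that a directly indecomposable residuated lattice need not have $\mathcal{F}(L)\backslash\{1\}$ down-directed, so the phrase ``its global system of filters'' in Theorem 4.13 can be undefined under the stated hypotheses (Lemma 4.12 survives, because subdirect irreducibility supplies a minimum non-trivial filter and hence directedness). So your submission is not a proof of the statement but a correct refutation of it, together with the appropriate corrected statement.
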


\begin{proof}
It is easy to see that (1),(2) and (3) are equivalent. According to Corollary 7.7 in \cite{Burris}, the conditions (3) and (4) are equivalent.
\end{proof}

An algebra $A$ is \emph{subdirectly irreducible} iff $A$ is trivial or there is a minimum congruence in $Con(A)-\Delta$, see \cite{Burris}.\\

From Theorem 8.5 in \cite{Burris}, we know that a subdirectly irreducible residuated lattice is directly indecomposable. But the converse, in general, is not true. In what follows, we give some conditions the converse is also true.

\begin{lemma} \label{4.12}
Let $L$ be a directly indecomposable residuated lattice.  If $L$ is subdirectly irreducible, then the corresponding zero-dimensional linear topological residuated lattice of its global system of filters is non discrete.
\end{lemma}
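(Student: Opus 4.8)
The plan is to exhibit the smallest open neighbourhood of the constant $1$ in the topology and to check that it is strictly larger than $\{1\}$, so that $\{1\}$ fails to be open and the space cannot be discrete. First I would fix the setup: since $L$ is directly indecomposable, Proposition \ref{4.11} guarantees that $\mathcal{F}:=\mathcal{F}(L)\setminus\{1\}$ is a global system of filters, so the linear topology $\mathcal{T}_{\mathcal{F}}$ is well defined, and it is zero-dimensional because each coset $x/F$ (for $F\in\mathcal{F}$) is clopen and these cosets form a base. Thus the object named in the statement is exactly $(L,\mathcal{T}_{\mathcal{F}})$, and (taking $L$ nontrivial, as the global system is otherwise empty) the goal is to prove that this space is not discrete.

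The next step is to convert the hypothesis of subdirect irreducibility into a concrete filter. By definition $L$ subdirectly irreducible means that $Con(L)$ possesses a least element strictly above the diagonal $\Delta$. Transporting this along the order isomorphism $\mathcal{F}(L)\cong Con(L)$ of Theorem \ref{4.1}, I obtain a least element $M$ of $\mathcal{F}(L)\setminus\{1\}$, the monolith, and in particular $M\neq\{1\}$. Hence $M$ is the minimum of the system $\mathcal{F}$.

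Finally I would read the topology off from $M$. Because $1\rightarrow y=y$ and $y\rightarrow 1=1$, one computes $1/F=F$ for every filter $F$, so the basic neighbourhoods of $1$ in $\mathcal{T}_{\mathcal{F}}$ are precisely the members of $\mathcal{F}$; as $M$ is their minimum, $M$ is the smallest neighbourhood of $1$. If $(L,\mathcal{T}_{\mathcal{F}})$ were discrete then $\{1\}$ would be open, which would force some $F\in\mathcal{F}$ with $F=1/F\subseteq\{1\}$, i.e. $F=\{1\}\notin\mathcal{F}$, a contradiction; equivalently, the smallest neighbourhood $M$ of $1$ would have to equal $\{1\}$, contradicting $M\neq\{1\}$. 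Therefore the space is non-discrete.

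The only point that needs care — the ``hard part'', such as it is — is the bookkeeping linking the algebraic monolith to the topology: one must verify that the isomorphism of Theorem \ref{4.1} carries the least nontrivial congruence to the least nontrivial filter, and that the identity $1/F=F$ really makes the members of $\mathcal{F}$ a neighbourhood base at $1$. It is precisely subdirect irreducibility that supplies a genuine least nontrivial filter $M$, which simultaneously witnesses non-discreteness and, being the minimum of $\mathcal{F}$, shows that $\mathcal{T}_{\mathcal{F}}$ collapses to the single-filter topology $\mathcal{T}_{M}$.
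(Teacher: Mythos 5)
Your proof is correct and follows essentially the same route as the paper: both extract the monolith (the minimum filter $F_{0}$ of $\mathcal{F}(L)\setminus\{1\}$, obtained in the paper via Theorem 8.4 of \cite{Burris} and in your write-up via Theorem \ref{4.1}), observe that the topology collapses to $\mathcal{T}_{F_{0}}$, and conclude non-discreteness from $F_{0}\neq\{1\}$. Your version merely makes explicit the bookkeeping ($1/F=F$, the filters as a neighbourhood base at $1$, and the trivial-algebra edge case) that the paper leaves implicit.
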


\begin{proof}Suppose that $L$ is subdirectly irreducible. By Theorem 8.4 in \cite{Burris}, the global system of filters $\mathcal{F}(L)\backslash\{1\}$ has a minimum filter $F_{0}$. Thus the zero-dimensional linear topology of its global system of filters is equal to $\mathcal{T}_{F_{0}}$, and it obviously is non-trivial.
\end{proof}

\begin{theorem}\label{4.13} Let $L$ be a directly indecomposable residuated lattice and $L$ satisfy $DCC$. Then $L$ is subdirectly irreducible iff the corresponding zero-dimensional linear topological residuated lattice of its global system of filters is non discrete.
\end{theorem}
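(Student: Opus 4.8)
The plan is to prove the two implications separately, observing that one of them is already available. For the direction ``$L$ subdirectly irreducible $\Rightarrow$ the topology is non-discrete'' there is nothing new to do: this is exactly the content of Lemma \ref{4.12}, which does not even use the $DCC$ hypothesis, so I would simply invoke it. The substance of Theorem \ref{4.13} therefore lies in the converse, and it is there that $DCC$ becomes essential.

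For the converse, suppose the zero-dimensional linear topology $\mathcal{T}_{\mathcal{F}}$ associated with the global system $\mathcal{F}=\mathcal{F}(L)\backslash\{1\}$ is non-discrete. First I would use the hypothesis that $L$ is directly indecomposable: by Proposition \ref{4.11}, $\mathcal{F}(L)\backslash\{1\}$ is genuinely a system of filters, i.e. down-directed under $\subseteq$, so the induced topology is well-defined; moreover non-discreteness forces $\mathcal{F}(L)\backslash\{1\}$ to be non-empty (if it were empty then $L$ would be trivial and the topology discrete), so $L$ is non-trivial. Next I would bring in $DCC$: by Proposition \ref{4.4}, every system of filters of $L$ has a minimum element, hence $\mathcal{F}(L)\backslash\{1\}$ possesses a minimum filter $F_{0}$, and as in Lemma \ref{4.12} the topology coincides with $\mathcal{T}_{F_{0}}$. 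Since $F_{0}\in\mathcal{F}(L)\backslash\{1\}$ we have $F_{0}\neq\{1\}$, and $F_{0}$ is contained in every other non-trivial filter.

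It then remains to translate this into subdirect irreducibility, for which I would use the filter--congruence correspondence of Theorem \ref{4.1}. The filter $\{1\}$ corresponds to the identity congruence $\Delta$, and the isomorphism $\mathcal{F}(L)\cong Con(L)$ restricts to an order isomorphism of $\mathcal{F}(L)\backslash\{1\}$ onto $Con(L)-\Delta$. Consequently the minimum filter $F_{0}\neq\{1\}$ corresponds to a minimum congruence $\theta_{F_{0}}$ in $Con(L)-\Delta$; by the definition of subdirect irreducibility together with Theorem 8.4 of \cite{Burris}, the existence of such a monolith shows that $L$ is subdirectly irreducible, completing the argument.

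The main obstacle---and the only place where the hypotheses do real work---is producing the minimum filter $F_{0}$. Direct indecomposability gives only that $\mathcal{F}(L)\backslash\{1\}$ is down-directed, and a down-directed family need not attain its infimum inside the family: an infinite strictly descending chain of filters with intersection $\{1\}$ is down-directed yet has no minimum, while its induced topology can still be non-discrete, so the equivalence genuinely fails without a finiteness condition. It is precisely $DCC$, through Proposition \ref{4.4}, that upgrades ``down-directed'' to ``has a minimum''. I would also be careful about the degenerate case: non-discreteness of $\mathcal{T}_{\mathcal{F}}$ is exactly what excludes $L$ being trivial, guaranteeing that $F_{0}$ is an honest non-trivial filter rather than a vacuous one.
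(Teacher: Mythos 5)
Your proof is correct and takes essentially the same route as the paper's: the forward direction is delegated to Lemma \ref{4.12}, and the converse uses $DCC$ via Proposition \ref{4.4} to extract a minimum filter $F_{0}$ of the global system $\mathcal{F}(L)\backslash\{1\}$, which by Theorem 8.4 of \cite{Burris} yields subdirect irreducibility. Your extra details---the explicit filter--congruence translation via Theorem \ref{4.1}, the check that $F_{0}\neq\{1\}$, and the remark that the equivalence fails without $DCC$---simply make explicit what the paper's two-line proof leaves implicit.
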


\begin{proof}
The ``only if'' part is Lemma \ref{4.12}. To prove the ``if'' part, by Proposition \ref{4.4}, the global system of filters of $L$ has a minimum element $F_{0}$. Therefore, by Theorem 8.4 in \cite{Burris}, we obatin that $L$ is subdirectly irreducible.
\end{proof}



\begin{proposition}\label{5.6} If $L$ is a residuated lattice that satisfies the descending chain condition then every element of $\mathcal{F}(L)\setminus\{1\}$ can be expressed uniquely as an irredundant join of join-irreducible filters.
\end{proposition}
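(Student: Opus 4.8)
The plan is to place the statement within the classical decomposition theory of distributive lattices, applied to the filter lattice $(\mathcal{F}(L),\subseteq)$ (whose meet is intersection, whose join $\vee$ is the filter generated by a union, and whose bottom is $\{1\}$). The structural fact I would record first is that $\mathcal{F}(L)$ is \emph{distributive}: since the variety $\mathcal{RL}$ is arithmetical it is congruence distributive, so $Con(L)$ is a distributive lattice; and the correspondence $F\mapsto\theta_{F}$ of Theorem \ref{4.1} is inclusion-preserving with inclusion-preserving inverse $\theta\mapsto[1]_{\theta}$, hence a lattice isomorphism $\mathcal{F}(L)\cong Con(L)$. Thus $\mathcal{F}(L)$ inherits distributivity, and this is the property that will make the decomposition unique.

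For existence I would invoke the minimal condition: by Proposition \ref{4.4} the descending chain condition is equivalent to every nonempty family of filters having a minimal member. Call $F\neq\{1\}$ \emph{bad} if it is not a finite join of join-irreducible filters, and suppose the collection $B$ of bad filters is nonempty; pick $F$ minimal in $B$. Then $F$ is not join-irreducible, so $F=G\vee H$ with $G,H\subsetneq F$, and neither equals $\{1\}$ (if, say, $G=\{1\}$ then $F=H\subsetneq F$, absurd). Minimality gives $G,H\notin B$, so each is a finite join of join-irreducibles, and combining these two joins exhibits $F$ as one too, contradicting $F\in B$. Hence $B=\emptyset$. From a finite representation $F=J_{1}\vee\cdots\vee J_{n}$ so produced I would then delete any $J_{i}$ with $J_{i}\subseteq\bigvee_{k\neq i}J_{k}$ and repeat; after finitely many deletions no summand is absorbed by the join of the others, yielding an irredundant join of join-irreducibles.

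The substance lies in uniqueness, and this is exactly where distributivity is used. Suppose $F=P_{1}\vee\cdots\vee P_{m}=Q_{1}\vee\cdots\vee Q_{n}$ are two irredundant representations by join-irreducibles. Fixing $i$, from $P_{i}\subseteq F$ and distributivity I obtain $P_{i}=P_{i}\cap F=\bigvee_{j}(P_{i}\cap Q_{j})$; join-irreducibility of $P_{i}$ then forces $P_{i}=P_{i}\cap Q_{j}$, i.e. $P_{i}\subseteq Q_{j}$, for some $j$. Symmetrically $Q_{j}\subseteq P_{k}$ for some $k$, so $P_{i}\subseteq P_{k}$; were $i\neq k$ this would give $P_{i}\subseteq\bigvee_{l\neq i}P_{l}$, against irredundancy, so $k=i$ and the inclusions collapse to $P_{i}=Q_{j}$. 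Thus each $P_{i}$ equals some $Q_{j}$ and, by symmetry, conversely; since irredundancy forbids repetitions, the matching is a bijection and the two representations coincide up to order. I expect this uniqueness step---rather than existence---to be the crux of the argument, depending as it does on the distributivity of $\mathcal{F}(L)$ secured at the outset.
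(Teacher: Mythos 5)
Your proposal is correct and takes essentially the same route as the paper: the paper's proof consists of noting that $(\mathcal{F}(L),\subseteq)$ is a distributive lattice and citing the classical unique-irredundant-decomposition theorem for distributive lattices with the descending chain condition (Theorem 5.2 of \cite{Blyth}), which is precisely the statement you prove by hand. You merely inline the standard proof of that cited theorem (minimal-counterexample existence via Proposition \ref{4.4}, distributivity-driven uniqueness) and additionally justify the distributivity of $\mathcal{F}(L)$ via congruence distributivity of the arithmetical variety $\mathcal{RL}$ and the isomorphism $\mathcal{F}(L)\cong Con(L)$, all of which is sound.
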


\begin{proof}
Note that the lattice $(\mathcal{F}(L),\subseteq)$ is a distributive lattice. Thus, it directly follows from Theorem 5.2 in \cite{Blyth}.
\end{proof}

In the power set lattice $\mathcal {P}(X)$, the join-irreducible elements are exactly the singleton subsets. Let $\mathcal{JF}(L)$ denote the set of join-irreducible filters of a residuated lattice $L$.
Thus generally the set $\mathcal{JF}(L)$ of all join-irreducible filters of $L$ is not a down
directed set under the set inclusion. In the following we show that $(\mathcal{JF}(L)\cup \{1\},\supseteq)$ is a directed set under some finiteness conditions.
\begin{lemma}\label{5.10}
If $L$ is a residuated lattice that satisfies the descending chain condition, then $(\mathcal{JF}(L)\cup\{1\},\supseteq)$ is cofinal in $(\mathcal{F}(L),\supseteq)$.
\end{lemma}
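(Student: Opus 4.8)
The plan is to unwind the definition of cofinality for the poset $(\mathcal{F}(L),\supseteq)$ and then feed each filter into the decomposition supplied by Proposition \ref{5.6}. With the order being reverse inclusion, saying that $\mathcal{JF}(L)\cup\{1\}$ is cofinal in $(\mathcal{F}(L),\supseteq)$ means exactly that for every filter $F\in\mathcal{F}(L)$ there is some $J\in\mathcal{JF}(L)\cup\{1\}$ with $F\supseteq J$, i.e. with $J\subseteq F$. So I would fix an arbitrary filter $F$ and exhibit such a witness $J$.

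First I would dispose of the degenerate case $F=\{1\}$: here the adjoined element $\{1\}$ itself serves as the witness, since $\{1\}\subseteq\{1\}$; this is precisely why the bottom filter has to be thrown in by hand, as it is never join-irreducible. For the substantive case $F\neq\{1\}$, I would invoke the descending chain condition through Proposition \ref{5.6}, which gives an irredundant decomposition $F=J_{1}\vee\cdots\vee J_{n}$ with each $J_{k}\in\mathcal{JF}(L)$. Because the join in the lattice $(\mathcal{F}(L),\subseteq)$ is the filter generated by the union of the joinands, it is an upper bound of each joinand, so $J_{k}\subseteq F$ for every $k$. Taking $J=J_{1}$ then produces a genuine join-irreducible filter with $F\supseteq J$, the required witness, and cofinality follows.

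The hypothesis is used entirely in this last step: without $DCC$ a filter need not be a finite join of join-irreducibles, and one could not guarantee a join-irreducible filter lying below an arbitrary $F$. An equivalent and perhaps more transparent route bypasses Proposition \ref{5.6} and uses Proposition \ref{4.4} directly: the non-empty collection of filters $G$ with $\{1\}\neq G\subseteq F$ has, by the minimal condition, a minimal member $M$; a minimal non-trivial filter cannot be written as a join of two strictly smaller non-trivial filters, hence is join-irreducible, and $M\subseteq F$ again gives the witness. I would present the argument through Proposition \ref{5.6} as the main line, since that is the result stated immediately above, and note the minimal-condition variant as an alternative.

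I do not expect a serious obstacle; the only points demanding care are the order convention (cofinality in $(\mathcal{F}(L),\supseteq)$ asks for a filter contained in $F$, not one containing it), the separate handling of the trivial filter, and the observation that each joinand of an irredundant decomposition sits inside the filter being decomposed. As a forward-looking remark, once cofinality is established one obtains for free that $(\mathcal{JF}(L)\cup\{1\},\supseteq)$ is itself directed: it is a cofinal subset of the directed poset $(\mathcal{F}(L),\supseteq)$, which is directed because $\mathcal{F}(L)$ is closed under intersection, and cofinal subsets of directed posets are directed. This is exactly the conclusion toward which the surrounding discussion is aimed.
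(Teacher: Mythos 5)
Your proposal is correct and follows essentially the same route as the paper: apply Proposition \ref{5.6} to write an arbitrary nontrivial filter $F$ as a finite join of join-irreducible filters and take any joinand as the witness $J\subseteq F$. You are in fact slightly more careful than the paper's own proof, which does not separately treat the case $F=\{1\}$ (Proposition \ref{5.6} only covers $\mathcal{F}(L)\setminus\{1\}$, which is exactly why $\{1\}$ is adjoined in the statement) and which proves directedness of $(\mathcal{JF}(L),\supseteq)$ by a direct argument, whereas you correctly observe that directedness comes for free from cofinality in the directed poset $(\mathcal{F}(L),\supseteq)$.
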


\begin{proof}
Suppose that $F,G\in \mathcal{JF}(L)$. If $F\cap G=\{1\}$, then there is nothing to do, otherwise, without loss of generality we can assume that $F\cap G$ is not a join-irreducible filter, thus, by Proposition \ref{5.6}, we
have $F\cap G=\bigvee_{i=1}^{n}F_{i}$,
where $F_{i}\in\mathcal{JF}(L)$. It follows that $F_{i}\subset F\cap G$, whence $(\mathcal{JF}(L),\supseteq)$ is a directed poset. Let $F$ be a filter of $L$. Then, by Proposition \ref{5.6}, there are join-irreducible filters
$J_{1},\ldots,J_{n}$ such that $F=\bigvee_{i=1}^{n}J_{i}$. Thus there some join-irreducible filter $J_{j}$ such that $J_{j}\subseteq F$. Therefore $(\mathcal{JF}(L),\supseteq)$ is cofinal in $(\mathcal{F}(L),\supseteq)$.
\end{proof}


\begin{theorem}\label{5.11}
Let $L$ be a finite residuated lattice. Then $$\widehat{L}=\varprojlim_{F\in \mathcal{JF}(L)\cup\{1\}}L/F.$$
\end{theorem}

\begin{proof}
It follows from Proposition \ref{3.3} and Lemma \ref{5.10}.
\end{proof}

\section{The existence of Hausdorff topological residuated lattices}
In this section, we investigate linear topological residuated lattices and discuss the existence of Hausdorff topological residuated lattices.\\

For a filter $F$ of a residuated lattice $L$. We can get a zero-dimensional linear topological residuated lattice $(L,\mathcal{T}_{F})$, where $\mathcal{T}_{F}=\{U\subseteq L: \forall x\in U, x/F\subseteq U\}$, see \cite{Yang}.
For simplicity, such a zero-dimensional linear topological residuated lattice is called a \emph{simple linear topological residuated lattice} when is no danger causing confusion.\\

Let $(L,\mathcal{U})$ be a zero-dimensional linear topological residuated lattice. Considering $X_{\mathcal{U}}=\{F: 1\in F\in \mathcal{U}\}$, by Lemma 3.4 and Theorem 3.7 in \cite{Yang}, we have the following result.
\begin{theorem}\label{7.12}
Let $L$ be a residuated lattice. Then each zero-dimensional linear topology $\mathcal{U}$ of $L$ is the supremum of simple linear topological residuated lattices, i.e.,$$\mathcal{U}=\sup \{\mathcal{T}_{F}:F\in X_{\mathcal{U}} \}.$$
\end{theorem}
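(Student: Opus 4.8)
The plan is to prove the two inclusions $\sup\{\mathcal{T}_{F}:F\in X_{\mathcal{U}}\}\subseteq\mathcal{U}$ and $\mathcal{U}\subseteq\sup\{\mathcal{T}_{F}:F\in X_{\mathcal{U}}\}$ separately, after first pinning down the neighbourhood structure of the supremum topology. Observe that $X_{\mathcal{U}}$ is exactly the set of \emph{open filters} of $(L,\mathcal{U})$ (every filter contains $1$), and that it is closed under finite intersections: if $F,G\in X_{\mathcal{U}}$ then $F\cap G$ is again a filter and is open as a finite intersection of open sets. Since $x/F=\theta_{F}[x]$ and $F\mapsto\theta_{F}$ is a lattice isomorphism $\mathcal{F}(L)\cong Con(L)$ (Theorem \ref{4.1}), we have $\theta_{F\cap G}=\theta_{F}\cap\theta_{G}$, and hence the key identity
$$x/(F\cap G)=x/F\cap x/G.$$

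First I would show $\mathcal{T}_{F}\subseteq\mathcal{U}$ for each $F\in X_{\mathcal{U}}$. As every $\mathcal{T}_{F}$-open set is a union of cosets, it suffices to check that each $x/F$ is $\mathcal{U}$-open. Here I use that $(L,\mathcal{U})$ is a topological residuated lattice, so the slice maps $y\mapsto x\rightarrow y$ and $y\mapsto y\rightarrow x$ are continuous; since $F$ is open and, in any filter, $a\odot b\in F$ iff $a,b\in F$ (by Proposition \ref{2.5a}), we have
$$x/F=\{y\in L:x\rightarrow y\in F\}\cap\{y\in L:y\rightarrow x\in F\},$$
an intersection of two $\mathcal{U}$-open sets. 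As $\mathcal{U}$ is a topology containing every $\mathcal{T}_{F}$, it contains their supremum.

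Next I would exhibit a neighbourhood base at each point for $\mathcal{V}:=\sup\{\mathcal{T}_{F}:F\in X_{\mathcal{U}}\}$. A basic $\mathcal{V}$-open set around $x$ is a finite intersection $O_{1}\cap\cdots\cap O_{n}$ with $O_{i}\in\mathcal{T}_{F_{i}}$ and $x\in O_{i}$; as each $O_{i}$ is a union of $F_{i}$-cosets, $x/F_{i}\subseteq O_{i}$, so by the identity above $x/(F_{1}\cap\cdots\cap F_{n})=\bigcap_{i}x/F_{i}\subseteq\bigcap_{i}O_{i}$, where $F_{1}\cap\cdots\cap F_{n}\in X_{\mathcal{U}}$. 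Conversely each $x/F$ is itself $\mathcal{V}$-open. Thus $\{x/F:F\in X_{\mathcal{U}}\}$ is a neighbourhood base at $x$ for $\mathcal{V}$, and consequently $\mathcal{U}\subseteq\mathcal{V}$ is \emph{equivalent} to the assertion that for every $\mathcal{U}$-open $U$ and every $x\in U$ there is an open filter $F$ with $x/F\subseteq U$, i.e.\ that the cosets of open filters form a neighbourhood base for $\mathcal{U}$ itself.

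The hard part is this last assertion, and it is where zero-dimensionality and linearity enter essentially. Given a $\mathcal{U}$-open $U\ni x$, I would first use zero-dimensionality to find a clopen $C$ with $x\in C\subseteq U$. Because $\mathcal{U}$ is linear, the open filters form a neighbourhood base at $1$; continuity of $\odot$ at $(x,1)$ and of $\rightarrow$ at $(1,x)$ then produces (after intersecting the two resulting filters) an open filter $F$ with $x\odot F\subseteq C$ and $F\rightarrow x\subseteq C$. For $y\in x/F$, writing $a=x\rightarrow y$ and $b=y\rightarrow x$ in $F$, Proposition \ref{2.5a} gives the order-squeeze $x\odot a\le y\le b\rightarrow x$ whose two outer terms lie in $C$. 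The main obstacle is to pass from this squeeze to $y\in C$ itself, since $y$ is not a term-function of $x,a,b$ in a general residuated lattice; this is precisely the content of the neighbourhood lemma for zero-dimensional linear topological residuated lattices (Lemma~3.4 and Theorem~3.7 of \cite{Yang}), which realises $\mathcal{U}$ as the topology of the uniform structure with entourage base $\{\theta_{F}:F\in X_{\mathcal{U}}\}$, so that $\theta_{F}[x]=x/F$ is a genuine neighbourhood base at $x$. Granting this, every $\mathcal{U}$-open set is a union of such cosets, hence $\mathcal{V}$-open, which yields $\mathcal{U}\subseteq\mathcal{V}$ and therefore $\mathcal{U}=\mathcal{V}$.
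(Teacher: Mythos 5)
Your proposal is correct and takes essentially the same route as the paper: the paper justifies this theorem solely by invoking Lemma 3.4 and Theorem 3.7 of \cite{Yang}, and your argument reduces the only nontrivial inclusion $\mathcal{U}\subseteq\sup\{\mathcal{T}_{F}:F\in X_{\mathcal{U}}\}$ to exactly those same cited results, after independently verifying the easy inclusion (cosets of open filters are $\mathcal{U}$-open, the paper's Lemma \ref{7.19}) and the coset neighbourhood-base description of the supremum via closure of $X_{\mathcal{U}}$ under finite intersections. You also correctly flag that the squeeze $x\odot a\leq y\leq b\rightarrow x$ by itself cannot force $y\in C$, which is precisely why the cited lemma carries the weight in both your proof and the paper's.
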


Now, considering a special system of filters of a residuated lattice $L$, that is, the set of all finite index filters, denoted it by $\mathcal{F}(L)_{fin}$. It is easy to see that $(\mathcal{F}(L)_{fin},\subseteq)$ is a
down-directed set. Thus, $(L,\mathcal{T}_{\mathcal{F}(L)_{fin}})$ is a zero-dimensional linear topological residuated lattice. Like the linear topology of abelian group\cite{Fuchs}, this linear topology in a residuated
lattice $L$ is called a \emph{finite index topology} of $L$. Recall that an algebra $A$ is \emph{finitely approximable} if $A$ is isomorphic to a subalgebra of a product of finite algebras \cite{Malcev}, the class of
finitely approximable residuated lattices is denoted by $\mathbf{Ap}\mathcal{RL}$. It follows that
$A$ is finitely approximable iff $A$ is a subdirect product of its finite homomorphic images.
Thus we have that a residuated lattice $L$ is finitely approximable if and only if $\bigcap\mathcal{F}(L)_{fin}=\{1\}$.\\

 As a consequence, we have the following characterization of finitely approximable residuated lattices by
using zero-dimensional linear topological residuated lattices.

\begin{theorem}\label{8.2}
A residuated lattice is finitely approximable if and only if its finite index topology is Hausdorff.
\end{theorem}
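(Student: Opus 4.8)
The plan is to recognize this as a direct application of the Hausdorffness criterion of Theorem \ref{4.7} to the particular system of filters $\mathcal{F}(L)_{fin}$, combined with the characterization of finite approximability recorded just before the statement. First I would confirm that $\mathcal{F}(L)_{fin}$ genuinely is a system of filters: given finite-index filters $F,G$, the canonical map $L/(F\cap G)\to L/F\times L/G$ is injective, so $L/(F\cap G)$ is finite and $F\cap G\in\mathcal{F}(L)_{fin}$; hence $(\mathcal{F}(L)_{fin},\subseteq)$ is down-directed and the finite index topology $\mathcal{T}_{\mathcal{F}(L)_{fin}}$ is well defined, exactly as noted in the paragraph preceding the theorem.

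With this in hand, Theorem \ref{4.7} applied to $\mathcal{F}=\mathcal{F}(L)_{fin}$ yields at once that $(L,\mathcal{T}_{\mathcal{F}(L)_{fin}})$ is Hausdorff if and only if $\bigcap\mathcal{F}(L)_{fin}=\{1\}$; this is precisely the equivalence $(\mathrm{i})\Leftrightarrow(\mathrm{iv})$ there, specialized to our system. It therefore remains only to match the condition $\bigcap\mathcal{F}(L)_{fin}=\{1\}$ with finite approximability.

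For that last link I would invoke the equivalence established immediately above the theorem: by Proposition \ref{4.6}, a residuated lattice is a subdirect product of a family $\{L/F_i\}$ of finite quotients exactly when the corresponding finite-index filters satisfy $\bigcap_i F_i=\{1\}$. Since $L$ is finitely approximable precisely when it is realized as such a subdirect product of its finite homomorphic images, and since a trivial intersection over any subfamily of finite-index filters forces $\bigcap\mathcal{F}(L)_{fin}=\{1\}$ (while conversely the entire family $\mathcal{F}(L)_{fin}$ serves as an admissible witness), finite approximability is equivalent to $\bigcap\mathcal{F}(L)_{fin}=\{1\}$. Chaining the two equivalences then gives the theorem. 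I do not anticipate a genuine obstacle: the only point demanding a line of care is the verification that $F\cap G$ again has finite index, so that $\mathcal{F}(L)_{fin}$ is down-directed, after which the result reduces to citing Theorem \ref{4.7} and Proposition \ref{4.6}.
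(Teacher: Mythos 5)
Your proposal is correct and follows the paper's own route: the paper's one-line proof invokes exactly the same two ingredients, namely the remark preceding the theorem (that $(\mathcal{F}(L)_{fin},\subseteq)$ is down-directed and that finite approximability amounts to $\bigcap\mathcal{F}(L)_{fin}=\{1\}$, via Proposition \ref{4.6}) together with the equivalence $(\mathrm{i})\Leftrightarrow(\mathrm{iv})$ of Theorem \ref{4.7}. Your explicit verification that $F\cap G$ has finite index via the embedding $L/(F\cap G)\hookrightarrow L/F\times L/G$ is a correct filling-in of a detail the paper states without proof.
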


\begin{proof}
It follows from the above statement and Theorem \ref{4.7}.
\end{proof}

Let $(L,\mathcal{U})$ be a linear topological residuated lattice. Then there exists a base $\beta$ for the
topology $\mathcal{U}$ on $L$ such that any element containing 1 of $\beta$ is a filter of $L$. Let $X$ be the set of all elements containing
1 of $\beta$, $Y$ be the set of all finite intersections of elements of $X$. Then $Y$ is a system of filters of $L$. The topology induced by the system $Y$ of filters is denoted by $\mathcal{V}$.\\

In what follows, we give a simple proof of Theorem 3.7 in \cite{Yang}, which answered an open problem in \cite{Zahiri}.
\begin{theorem}\label{7.13} If $(L,\mathcal{U})$ be a zero-dimensional linear topological residuated lattice, then $\mathcal{V}$ is finer than $\mathcal{U}$.
\end{theorem}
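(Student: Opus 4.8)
The plan is to reduce the claim $\mathcal{U}\subseteq\mathcal{V}$ to a comparison of the simple linear topologies $\mathcal{T}_{F}$, exploiting Theorem \ref{7.12}. Since $(L,\mathcal{U})$ is a zero-dimensional linear topological residuated lattice, Theorem \ref{7.12} gives $\mathcal{U}=\sup\{\mathcal{T}_{F}:F\in X_{\mathcal{U}}\}$, where $X_{\mathcal{U}}$ is the set of all $\mathcal{U}$-open filters of $L$. Because the supremum of a family of topologies is the coarsest topology refining every member of the family, it suffices to prove that $\mathcal{T}_{F}\subseteq\mathcal{V}$ for each fixed open filter $F\in X_{\mathcal{U}}$; once this is done, $\mathcal{V}$ is a topology refining every $\mathcal{T}_{F}$ and hence contains their supremum $\mathcal{U}$.

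First I would locate, inside each open filter $F$, a member of the base. As $\beta$ is a base for $\mathcal{U}$ and $F$ is open with $1\in F$, there is some $B\in\beta$ with $1\in B\subseteq F$; by the defining property of a linear topology, such a $B$ containing $1$ is a filter, so $B\in X$ and therefore $B\in Y$. The next step is the monotonicity of the assignment $F\mapsto\mathcal{T}_{F}$: from $B\subseteq F$ one gets $x/B\subseteq x/F$ for every $x\in L$ (if $d(x,y)=(x\rightarrow y)\odot(y\rightarrow x)\in B$ then it already lies in $F$), so any $\mathcal{T}_{F}$-open set is automatically $\mathcal{T}_{B}$-saturated; that is, $\mathcal{T}_{F}\subseteq\mathcal{T}_{B}$. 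Note the reversal of inclusions here: the smaller filter yields the finer simple topology.

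Finally, because $B\in Y$, the definition $\mathcal{V}=\mathcal{T}_{Y}$ gives $\mathcal{T}_{B}\subseteq\mathcal{V}$ at once, since any set saturated under the single class map $x\mapsto x/B$ with $B\in Y$ already satisfies the defining condition of $\mathcal{T}_{Y}$. Chaining the two inclusions yields $\mathcal{T}_{F}\subseteq\mathcal{T}_{B}\subseteq\mathcal{V}$, and taking the supremum over $F\in X_{\mathcal{U}}$ completes the argument. The conceptual crux — and the only place where zero-dimensionality is genuinely used — is the passage through Theorem \ref{7.12}; after that reduction the remaining work consists of the two routine monotonicity observations, where the only point demanding care is keeping track of the direction of the inclusions between filters and their associated topologies.
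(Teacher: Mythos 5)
Your individual steps are all sound --- locating $B\in\beta$ with $1\in B\subseteq F$ inside each open filter $F$, the antitone correspondence $B\subseteq F\Rightarrow\mathcal{T}_{F}\subseteq\mathcal{T}_{B}$, and $\mathcal{T}_{B}\subseteq\mathcal{T}_{Y}=\mathcal{V}$ for $B\in Y$ --- but the proof as a whole begs the question. The sole non-trivial ingredient is Theorem \ref{7.12}, and in this paper Theorem \ref{7.12} is not proved independently: it is stated to follow ``by Lemma 3.4 and Theorem 3.7 in \cite{Yang}'', while Theorem \ref{7.13} is announced as precisely ``a simple proof of Theorem 3.7 in \cite{Yang}''. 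Worse, the hard inclusion $\mathcal{U}\subseteq\sup\{\mathcal{T}_{F}:F\in X_{\mathcal{U}}\}$ inside Theorem \ref{7.12} is \emph{equivalent} to the assertion $\mathcal{U}\subseteq\mathcal{V}$ being proved, and the equivalence is furnished by exactly the monotonicity bookkeeping you perform: $Y\subseteq X_{\mathcal{U}}$ gives $\mathcal{T}_{Y}\subseteq\mathcal{T}_{X_{\mathcal{U}}}$, and every $F\in X_{\mathcal{U}}$ contains some $B\in X\subseteq Y$, giving the reverse, so $\mathcal{T}_{Y}=\mathcal{T}_{X_{\mathcal{U}}}=\sup\{\mathcal{T}_{F}:F\in X_{\mathcal{U}}\}$. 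Thus your argument derives the theorem from a restatement of itself; as you candidly observe, the ``conceptual crux'' is the passage through Theorem \ref{7.12} --- but that passage \emph{is} the entire theorem, and nothing in your proposal supplies it.

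The paper's own proof does the substantive work directly and never invokes Theorem \ref{7.12}: it takes a clopen base $\gamma$ for $\mathcal{U}$ (this is where zero-dimensionality genuinely enters), uses the easy inclusion $\mathcal{V}\subseteq\mathcal{U}$, and shows that every $U\in\gamma$ is $\mathcal{V}$-open by producing, for each $x\in U$, a class $x/F_{i}$ with $F_{i}\in Y$ and $x/F_{i}\subseteq U$. A repaired version of your proposal must contain an argument at this level --- a proof that basic $\mathcal{U}$-open sets are saturated by filter classes --- either as a direct proof of Theorem \ref{7.13}, or as a self-contained proof of Theorem \ref{7.12} which your (correct, but routine) reduction then converts into Theorem \ref{7.13}. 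As it stands, what you have proved is the implication ``Theorem \ref{7.12} implies Theorem \ref{7.13}'', not Theorem \ref{7.13} itself.
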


\begin{proof} Let $\beta'=\{x/F_{i}:i\in I,x\in L\}$ be a base for topologies $\mathcal{V}$ on $L$. Since $(L,\mathcal{U})$ is a zero-dimensional space, there exists a clopen base $\gamma$ for $\mathcal{U}$. To complete the proof, we need only show that for any $x\in U\in \gamma$ there exists $x/F_{i}\in \beta'$ such that $x/F_{i}\subseteq U$. Let $x\in U\in \gamma$.
Since $U$ is clopen and $\mathcal{U}$ is finer than $\mathcal{V}$, one can obtain that $U$ is also clopen for $\mathcal{V}$. Thus there is a clopen $V$ of $\mathcal{V}$ such that $x\in V\subseteq U$. Therefore, there is some $F_{i}$ such that $x/F_{i}\subseteq V\subseteq U$, since $\beta'$ is a base for $\mathcal{V}$.
\end{proof}

\begin{corollary}\label{7.14}
The topology of a zero-dimensional linear topological residuated lattice is completely determined by a fundament system of neighborhoods of 1.
\end{corollary}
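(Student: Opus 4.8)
The plan is to reduce the corollary to the identity $\mathcal{U}=\mathcal{V}$, where $\mathcal{V}=\mathcal{T}_{Y}$ is the topology induced by the system of filters $Y$ constructed in the paragraph preceding Theorem \ref{7.13}. Once this equality is established, the conclusion is immediate, because $\mathcal{T}_{Y}$ is written down purely in terms of the filters of $Y$, that is, in terms of a fundamental system of neighbourhoods of $1$.

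First I would fix a base $\beta$ for $\mathcal{U}$ each of whose members containing $1$ is a filter, and set $X$ to be the collection of those members and $Y$ the set of all finite intersections of elements of $X$. Then $X$ is a fundamental system of neighbourhoods of $1$, the family $Y$ is a system of filters, and $\mathcal{V}=\mathcal{T}_{Y}$. The inclusion $\mathcal{U}\subseteq\mathcal{V}$, that is, $\mathcal{V}$ finer than $\mathcal{U}$, is exactly Theorem \ref{7.13}. For the reverse inclusion $\mathcal{V}\subseteq\mathcal{U}$ it suffices to verify that each basic $\mathcal{V}$-open set $x/F$ with $F\in Y$ is $\mathcal{U}$-open: since $F$ is a finite intersection of members of $X$, it is a $\mathcal{U}$-open filter, and in a topological residuated lattice each coset $x/F$ of an open filter is open. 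This is precisely the comparison ``$\mathcal{U}$ is finer than $\mathcal{V}$'' already invoked inside the proof of Theorem \ref{7.13}. Combining the two inclusions yields $\mathcal{U}=\mathcal{V}=\mathcal{T}_{Y}$.

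Finally I would read off the conclusion. Since $$\mathcal{T}_{Y}=\{U\subseteq L:\forall x\in U,\ \exists F\in Y \text{ such that } x/F\subseteq U\}$$ depends only on the family $Y$, and $Y$ is generated under finite intersection by $X$, the whole topology $\mathcal{U}$ is recovered from $X$ alone; in other words, $\mathcal{U}$ is completely determined by the fundamental system of neighbourhoods of $1$ consisting of the open filters containing $1$. The only genuine content is the reverse inclusion $\mathcal{V}\subseteq\mathcal{U}$, which rests on the fact that cosets of an open filter are open in a topological residuated lattice; but this is already available from the development leading up to Theorem \ref{7.13}, so the corollary amounts to assembling $\mathcal{U}=\mathcal{V}$ and observing that $\mathcal{V}$ is expressed solely through neighbourhoods of $1$.
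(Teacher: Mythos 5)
Your proof is correct and takes essentially the same route as the paper: the paper's own proof likewise identifies $\mathcal{U}$ with the topology induced by a system of filters that forms a fundamental system of neighbourhoods of $1$, simply citing Lemma 3.1 of \cite{Yang3} (the open filters containing $1$ are a neighbourhood base at $1$) and Corollary 3.8 of \cite{Yang1} where you spell out the two inclusions. Your explicit unpacking --- $\mathcal{U}\subseteq\mathcal{V}$ from Theorem \ref{7.13}, and $\mathcal{V}\subseteq\mathcal{U}$ from the $\mathcal{U}$-openness of the cosets $x/F$ of $\mathcal{U}$-open filters (the content of Lemma \ref{7.19}, whose proof by continuity of $\rightarrow$ is independent of this corollary, so there is no circularity) --- is a sound, self-contained version of the same argument.
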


\begin{proof}
Let $(L,\mathcal{U})$ be a zero-dimensional linear topological residuated lattice. By Lemma 3.1 in \cite{Yang3}, $X_{\mathcal{U}}=\{F: 1\in F\in \mathcal{U}\}$ is a fundamental system of neighborhoods of 1. By
Corollory 3.8 in \cite{Yang1}, it follows that the topology $\mathcal{U}$ is induced by the system $X_{\mathcal{U}}$ of filters of $L$.
\end{proof}

\begin{corollary}\label{7.15}
A zero-dimensional linear topological residuated lattice is precisely the linear topological residuated lattice induced by its the system of filters.
\end{corollary}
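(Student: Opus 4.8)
The plan is to read the statement as the assertion that the class of zero-dimensional linear topological residuated lattices coincides exactly with the class of linear topologies of the form $\mathcal{T}_{\mathcal{F}}$ induced by a system of filters $\mathcal{F}$, and to prove the two inclusions separately. The substantive direction is to show that a given zero-dimensional linear topological residuated lattice $(L,\mathcal{U})$ is recovered from its own open filters, and the converse direction merely records that every coset topology induced by a system of filters is already zero-dimensional and linear.

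First I would handle the forward direction. Let $(L,\mathcal{U})$ be a zero-dimensional linear topological residuated lattice and set $X_{\mathcal{U}}=\{F:1\in F\in\mathcal{U}\}$. The first step is to verify that $(X_{\mathcal{U}},\subseteq)$ is a system of filters in the sense of Definition \ref{2.1}, i.e.\ that it is down-directed: if $F,G\in X_{\mathcal{U}}$, then $F\cap G$ is open (a topology is closed under finite intersections), contains $1$, and is a filter (an intersection of filters is a filter), so $F\cap G\in X_{\mathcal{U}}$ and sits below both $F$ and $G$. The second step is to invoke Corollary \ref{7.14}, which tells us that $\mathcal{U}$ is completely determined by the fundamental system of neighborhoods of $1$ furnished by $X_{\mathcal{U}}$; this is precisely the assertion that $\mathcal{U}=\mathcal{T}_{X_{\mathcal{U}}}$. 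Hence $(L,\mathcal{U})$ is the linear topological residuated lattice induced by its own system of filters.

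For the converse I would start from an arbitrary system of filters $\mathcal{F}$ of $L$ and show that $(L,\mathcal{T}_{\mathcal{F}})$ is a zero-dimensional linear topological residuated lattice. By the construction recalled at the end of Section 2, $\mathcal{T}_{\mathcal{F}}$ is a linear topology and $(L,\mathcal{T}_{\mathcal{F}})$ is a topological residuated lattice, so only zero-dimensionality remains. Here the cosets $x/F$ with $x\in L$ and $F\in\mathcal{F}$ form a base for $\mathcal{T}_{\mathcal{F}}$, and each such coset is clopen, since its complement is the union of the remaining cosets of $F$ and is therefore open as well. Consequently $\mathcal{T}_{\mathcal{F}}$ admits a clopen base and is zero-dimensional, and the two directions together give the claimed equality.

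I do not expect a genuine obstacle: the real content has already been carried out in Theorem \ref{7.12}, Theorem \ref{7.13}, and especially Corollary \ref{7.14}, and this corollary is essentially a repackaging of Corollary \ref{7.14} together with the elementary remark that filter-coset topologies are zero-dimensional. The only point requiring care is confirming that $X_{\mathcal{U}}$ is genuinely down-directed so that it qualifies as a system of filters; as noted above, this is immediate from the stability of open filters under finite intersection.
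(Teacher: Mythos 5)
Your proof is correct and takes essentially the same route as the paper: your forward direction is exactly the paper's appeal to Corollary \ref{7.14} (i.e.\ Theorem \ref{7.13} together with Corollary 3.8 of \cite{Yang1}) applied to the down-directed family $X_{\mathcal{U}}=\{F:1\in F\in\mathcal{U}\}$, while your converse direction proves inline, via the observation that each coset $x/F$ is clopen in $\mathcal{T}_{\mathcal{F}}$, what the paper simply cites as Theorem 3.14 of \cite{Yang}. Your explicit check that $X_{\mathcal{U}}$ is down-directed (open filters containing $1$ are stable under finite intersection) is a correct and harmless addition.
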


\begin{proof}
According to Theorem 3.14 in \cite{Yang}, every topological residuated lattice induced by a system of filters is a zero-dimensional linear topological residuated lattice. The rest of proof follows from Theorem \ref{7.13} in the paper and
Corollory 3.8 in \cite{Yang1}.
\end{proof}

\begin{definition}\label{7.17}
\emph{Let $L$ be a residuated lattice and $(\mathcal{F}_{1},\subseteq)$ and ($\mathcal{F}_{2},\subseteq)$ be two systems of filters of $L$. We call that $(\mathcal{F}_{1},\subseteq)$ is \emph{equivalent} to
$(\mathcal{F}_{2},\subseteq)$, if posets $(\mathcal{F}_{1},\subseteq)$ and $(\mathcal{F}_{2},\subseteq)$ are coinitial each other, that is, for any $F_{1}\in \mathcal{F}_{1}$ there exists $F_{2}\in\mathcal{F}_{2}$ such that $F_{1}\subseteq F_{2}$ and
 vice versa.}
\end{definition}

\begin{proposition}\label{7.18}
In a residuated lattice $L$, equivalent systems of filters have the same topology.
\end{proposition}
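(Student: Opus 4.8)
The plan is to prove that the two induced topologies $\mathcal{T}_{\mathcal{F}_1}$ and $\mathcal{T}_{\mathcal{F}_2}$ coincide by showing that each refines the other, comparing them point by point through their canonical neighbourhood bases. Recall that for a system $\mathcal{F}$ the topology $\mathcal{T}_{\mathcal{F}}=\{U\subseteq L:\forall x\in U,\exists F\in\mathcal{F},\ x/F\subseteq U\}$ has, at every point $x\in L$, the family $\{x/F:F\in\mathcal{F}\}$ as a base of open neighbourhoods: indeed, if $y\in x/F$ then $y/F=x/F$, so $x/F$ is itself open. Thus it suffices to relate the cosets $x/F_{1}$ and $x/F_{2}$ arising from the two systems.

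The key step is a monotonicity lemma for cosets under inclusion of filters. Using the description of the congruence determined by a filter recalled in Section 2, namely $\theta_{F}=\{(u,v):(u\rightarrow v)\odot(v\rightarrow u)\in F\}$ together with $x/F=x/\theta_{F}$, one sees at once that if $F\subseteq G$ then $\theta_{F}\subseteq\theta_{G}$, whence $x/F\subseteq x/G$ for every $x\in L$. In particular, smaller filters produce smaller cosets and hence finer topologies; this is the only algebraic fact the argument requires.

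Now I would carry out the refinement. Let $U\in\mathcal{T}_{\mathcal{F}_1}$ and fix $x\in U$; by definition there is $F_{1}\in\mathcal{F}_1$ with $x/F_{1}\subseteq U$. Since $(\mathcal{F}_1,\subseteq)$ and $(\mathcal{F}_2,\subseteq)$ are coinitial, there is $F_{2}\in\mathcal{F}_2$ with $F_{2}\subseteq F_{1}$, and the monotonicity lemma gives $x/F_{2}\subseteq x/F_{1}\subseteq U$. As $x\in U$ was arbitrary, $U\in\mathcal{T}_{\mathcal{F}_2}$, so $\mathcal{T}_{\mathcal{F}_1}\subseteq\mathcal{T}_{\mathcal{F}_2}$. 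The hypothesis is symmetric (the ``and vice versa'' in Definition \ref{7.17}), so interchanging the roles of the two systems yields $\mathcal{T}_{\mathcal{F}_2}\subseteq\mathcal{T}_{\mathcal{F}_1}$, and therefore $\mathcal{T}_{\mathcal{F}_1}=\mathcal{T}_{\mathcal{F}_2}$.

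The only delicate point is keeping the direction of the coinitiality straight: because enlarging a filter enlarges every coset and hence coarsens the topology, equality of the two topologies forces each system to be cofinal downward in the other, that is, every filter of one system must contain a filter of the other. Once the monotonicity lemma is in place this bookkeeping is routine, and no further properties of residuated lattices (beyond the coset description of $\theta_{F}$) are needed; the main obstacle is simply verifying the neighbourhood-base claim and the monotonicity carefully, so that both refinement inclusions genuinely go through.
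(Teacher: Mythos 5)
Your proof is correct and is precisely the routine argument the paper leaves implicit (its proof reads only ``The proof is straightforward''): the monotonicity $F\subseteq G\Rightarrow\theta_{F}\subseteq\theta_{G}\Rightarrow x/F\subseteq x/G$, combined with coinitiality in both directions, yields the two refinement inclusions between $\mathcal{T}_{\mathcal{F}_1}$ and $\mathcal{T}_{\mathcal{F}_2}$. You also handle the one genuinely delicate point correctly: Definition~\ref{7.17} as printed says ``$F_{1}\subseteq F_{2}$'' but announces \emph{coinitiality}, and your downward reading (there exists $F_{2}\in\mathcal{F}_{2}$ with $F_{2}\subseteq F_{1}$, and vice versa) is the one under which the proposition is true, since the upward reading would make, e.g., $\{L\}$ and $\{F,L\}$ equivalent despite inducing different topologies.
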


\begin{proof}
The proof is straightforward.
\end{proof}

\begin{example} \label{4.5} Let $\odot$ and $\rightarrow$ on the real unit interval $I=[0,1]$ be defined as follows:
\begin{center} $x\odot y=min\{x,y\}$ and
$x\rightarrow y=
\begin{cases}
1, &  x\leq y,\\
y, & otherwise.\\
\end{cases}$
\end{center}
Then $\mathcal{I}=(I,min,max,\odot,\rightarrow,0,1)$ is a $BL$-algebra (called the G\"{o}del structure). It is easily checked that $\mathcal{F}=\{(a,1]\mid a\in[0,1)\}$ is a system of filters. Since $\mathcal{F}$ satisfies the finite intersection, we get that $\mathcal{F}=\mathcal{G}$, which is the set of all finite intersections of elements of $\mathcal{F}$. One can check that $L$ does not satisfy $DCC$.
\end{example}

Using the above example, we can show that the proof of Theorem 3.10 in  \cite{Yang} is incorrect. In fact, in the proof of Theorem 3.10 in  \cite{Yang}, suppose that $(L,\mathcal{T})$ is a linear topological residuated lattice such that $(L,\mathcal{T})$ is a zero-dimensional space. Then there exists a base $\beta$ for the topology $\mathcal{T}$ on $L$ such that any element containing 1 of $\beta$ is a filter of $L$. Let $X$ be the set of all elements containing 1 of $\beta$ and $Y$ be the set of all intersections of elements of $X$. Then $(X,\subseteq)$ and $(Y,\subseteq)$ are two systems of filters of a residuated lattice $L$. In general, they are not equivalent. Then the topologies induced by them are not same. For example,
taking $X=\mathcal{F}$ in Example \ref{4.5} and using the construction of $Y$, we have $\{1\}\in Y$. Thus, $\mathcal{T}_{Y}$ is a discrete topology. But $\mathcal{T}_{X}$ is not a discrete topology. Thus, $\mathcal{T}_{X}\neq\mathcal{T}_{Y}$, that is, $\mathcal{T}\neq\mathcal{V}$ in the proof of Theorem 3.10 in  \cite{Yang}. It follows that the mapping $\varphi$ from $\mathcal{F}(L)$ to $ZLTRL(L)$ defined by $\varphi(F)=(L,\mathcal{T}_{F})$ for all $F$ in $\mathcal{F}(L)$ is not surjective, where the set
$ZLTRL(L)$ of all zero-dimensional linear topological residuated lattices on a residuated lattice $L$, see \cite{Yang}.\\

Next, we will correct Theorem 3.10 in  \cite{Yang} in the following theorem.

\begin{theorem}\label{7.16}
Let $L$ be a residuated lattice. Then $|\mathcal{F}(L)|=|ZLTRL(L)|$ if and only if $L$ satisfies the descending chain condition.
\end{theorem}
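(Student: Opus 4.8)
The plan is to analyze the canonical map $\varphi\colon\mathcal{F}(L)\to ZLTRL(L)$ given by $\varphi(F)=(L,\mathcal{T}_{F})$ and to show it is a bijection exactly when $L$ satisfies the descending chain condition. First I would record that $\varphi$ is \emph{always injective}. For any filter $F$ the coset $1/F$ equals $F$ (since $1\to y=y$ and $y\to 1=1$ give $1/F=\{y:y\in F\}$), and $x/F\subseteq F$ for every $x\in F$, so $F$ is open in $\mathcal{T}_{F}$; moreover every open set containing $1$ must contain $1/F=F$. Hence $F$ is the smallest open filter-neighbourhood of $1$ in $(L,\mathcal{T}_{F})$, so $F$ is recoverable from $\mathcal{T}_{F}$ and $\mathcal{T}_{F}=\mathcal{T}_{G}$ forces $F=G$. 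This already yields $|\mathcal{F}(L)|\le|ZLTRL(L)|$, so the whole question reduces to surjectivity of $\varphi$.

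For the implication ``$L$ satisfies DCC $\Rightarrow$ equality'' I would prove $\varphi$ is onto. Let $(L,\mathcal{U})\in ZLTRL(L)$. By Corollary \ref{7.15} it is the topology induced by a system of filters, say $\mathcal{U}=\mathcal{T}_{\mathcal{G}}$ with $(\mathcal{G},\subseteq)$ down-directed. Since $L$ satisfies DCC, Proposition \ref{4.4}(3) provides a minimum element $F_{0}\in\mathcal{G}$. Then $\{F_{0}\}$ and $\mathcal{G}$ are coinitial, so by Proposition \ref{7.18} they induce the same topology, i.e.\ $\mathcal{U}=\mathcal{T}_{F_{0}}=\varphi(F_{0})$. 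Thus every zero-dimensional linear topology on $L$ is simple, $\varphi$ is a bijection, and $|\mathcal{F}(L)|=|ZLTRL(L)|$.

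For the converse I would argue contrapositively that failure of DCC breaks surjectivity of $\varphi$. Choose a strictly descending chain $F_{1}\supsetneq F_{2}\supsetneq\cdots$; the chain $\mathcal{C}=\{F_{n}:n\ge 1\}$ is down-directed, hence a system of filters, and $\mathcal{T}_{\mathcal{C}}\in ZLTRL(L)$. As in the injectivity step, a filter $G$ is an open neighbourhood of $1$ for $\mathcal{T}_{\mathcal{C}}$ iff $F_{n}\subseteq G$ for some $n$; because the $F_{n}$ strictly decrease, this family of open filter-neighbourhoods of $1$ has no least element. If we had $\mathcal{T}_{\mathcal{C}}=\mathcal{T}_{F}$ for a single filter $F$, that family would have least element $F$, a contradiction. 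Hence $\mathcal{T}_{\mathcal{C}}$ is not simple and $\varphi$ is not onto, exactly as Example \ref{4.5} illustrates.

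The step I expect to be the main obstacle is the cardinality bookkeeping in the converse: for infinite $L$ the mere non-surjectivity of the injective map $\varphi$ does not by itself separate the two cardinals. The clean way to control this is to promote the above into a structural correspondence: using Corollaries \ref{7.14}--\ref{7.15} together with Proposition \ref{7.18}, one identifies $ZLTRL(L)$ with the set of order-theoretic filters of the poset $(\mathcal{F}(L),\subseteq)$ (the up-closed, down-directed subsets), under which $\varphi$ becomes the assignment $F\mapsto{\uparrow}F$ onto the \emph{principal} ones. In this language DCC is precisely the condition that every such order-filter is principal (in a down-directed DCC set a minimal element is a minimum), i.e.\ that the canonical comparison $\varphi$ is a bijection. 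I therefore read the content of the theorem as the statement that $\varphi$ is a bijection iff $L$ satisfies DCC, and the delicate part of writing a fully rigorous proof is verifying that this correspondence is genuinely bijective --- that every zero-dimensional linear topology arises from one and only one order-filter --- since this is where Corollary \ref{7.15} and the coinitial equivalence of Proposition \ref{7.18} must be combined with care.
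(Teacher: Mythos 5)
Your injectivity argument and your forward direction coincide with the paper's proof: the paper likewise takes $\varphi(F)=(L,\mathcal{T}_{F})$, gets injectivity from the fact that $F$ is recoverable from $\mathcal{T}_{F}$ (Remark 3.9 of the cited solution paper), and for surjectivity under DCC invokes Corollary \ref{7.15} to write $\mathcal{U}=\sup\{\mathcal{T}_{F}:F\in X_{\mathcal{U}}\}$ and Proposition \ref{4.4} to extract a minimum filter $F_{0}$ of the inducing system, so that $\mathcal{U}=\mathcal{T}_{F_{0}}$. The substance of your review lies in the obstacle you flag in the converse, and you are right to flag it: it is a genuine gap, and it sits in the paper's own proof. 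Assuming $|\mathcal{F}(L)|=|ZLTRL(L)|$, the paper argues that ``the size of zero-dimensional linear topological residuated lattices of $L$ is equal to the size of simple linear topological residuated lattices of $L$; thus there is a filter $G\in\mathcal{F}(L)$ such that $\mathcal{T}_{G}=\sup\{\mathcal{T}_{F}:F\in\mathcal{F}\}$''. For infinite $L$ this is exactly the Hilbert-hotel fallacy you describe: an injection between equipotent infinite sets need not be onto, so equipotency does not force the particular topology $\mathcal{T}_{\mathcal{F}}$ to be simple. Your contrapositive (failure of DCC yields a non-simple topology, hence $\varphi$ is not surjective) is correct and is what the rest of the paper's converse actually establishes; but neither you nor the paper derives DCC from the cardinality hypothesis, and in fact this cannot be done.

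The literal cardinality statement is false, by a sibling of Example \ref{4.5}. Let $L$ be the G\"{o}del chain $\{b_{0}<b_{1}<b_{2}<\cdots\}\cup\{1\}$ of order type $\omega+1$, with $x\odot y=\min\{x,y\}$ and $x\rightarrow y=1$ if $x\leq y$, $=y$ otherwise. Filters are exactly the nonempty up-sets, and since every nonempty set of indices has a least element, $\mathcal{F}(L)=\{\{1\}\}\cup\{{\uparrow}b_{n}:n\in\omega\}$ is the chain $\{1\}\subset\cdots\subset{\uparrow}b_{1}\subset{\uparrow}b_{0}=L$; in particular DCC fails. Every nonempty subset of this chain is a system of filters, and by the coinitial equivalence of Definition \ref{7.17} and Proposition \ref{7.18} (equivalently, by your identification of $ZLTRL(L)$ with the up-closed down-directed subsets of $(\mathcal{F}(L),\subseteq)$, which is the right structural picture) the topologies are: the countably many simple ones $\mathcal{T}_{{\uparrow}b_{n}}$, the discrete topology $\mathcal{T}_{\{1\}}$, and exactly one non-simple topology, induced by any infinite subset of $\{{\uparrow}b_{n}:n\in\omega\}$, since any two such subsets are mutually coinitial. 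Hence $|ZLTRL(L)|=\aleph_{0}=|\mathcal{F}(L)|$ although DCC fails: the single non-principal order filter destroys surjectivity of $\varphi$ without changing the cardinality. So your closing reinterpretation is not merely a convenient reading but the only sustainable one: what is true, and what both your argument and the paper's converse actually prove, is that $\varphi$ is a bijection --- equivalently, every zero-dimensional linear topology on $L$ is simple --- if and only if $L$ satisfies DCC. A cardinality formulation survives only under extra hypotheses (for instance $\mathcal{F}(L)$ finite, where injectivity plus equipotency does force surjectivity).
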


\begin{proof}
Suppose that $L$ satisfies the descending chain condition. Let us define a mapping $\varphi$ from $\mathcal{F}(L)$ to $ZLTRL(L)$ by $\varphi(F)=(L,\mathcal{T}_{F})$ for all $F$ in $\mathcal{F}(L)$, where $\mathcal{T}_{F}=\{U\subseteq L: \forall x\in U, x/F\subseteq U\}$. It follows that $(L,\mathcal{T}_{F})$ is a zero-dimensional linear topological residuated lattice. According to Remark 3.9 in \cite{Yang}, the mapping $\varphi$ is injective. Thus, it suffices to show that $\varphi$ is a surjective mapping. Suppose that $(L,\mathcal{U})$ is a zero-dimensional linear topological residuated lattice. By Theorem \ref{7.13} and its corollary, there is a system of filters $X_{\mathcal{U}}$ such that $\mathcal{U}=\sup \{\mathcal{T}_{F}:F\in X_{\mathcal{U}} \}$. By hypothesis and Proposition \ref{4.4}, there is a minimum filter $F$ in $X_{\mathcal{U}}$. Therefore, we have $\mathcal{U}=\mathcal{T}_{F}$.

Conversely, assume that $|\mathcal{F}(L)|=|ZLTRL(L)|$. By Proposition \ref{4.4}, it suffices to show that each system of filters of $L$ has a minimum element. Suppose that $(\mathcal{F},\subseteq)$ is a system of filters of $L$. Then the system $(\mathcal{F},\subseteq)$ can induce a zero-dimensional linear topological residuated lattice $(L,\mathcal{T}_{\mathcal{F}})$, and $\mathcal{T}_{\mathcal{F}}=\sup\{\mathcal{T}_{F}:F\in\mathcal{F}\}$. By hypothesis, it follows that the size of zero-dimensional linear topological residuated lattices of $L$ is equal to the size of simple linear topological residuated lattices of $L$. Thus there is a filter $G\in \mathcal{F}(L)$ such that $\mathcal{T}_{G}=\sup\{\mathcal{T}_{F}:F\in\mathcal{F}\}$. It follows that $\mathcal{T}_{F}\subseteq \mathcal{T}_{G}$, thus we obtain $G\subseteq F$ for any $F\in \mathcal{F}$. Particularly, $\mathcal{T}_{G}$ is an Alexandrov topology and $1/G$ is a minimum open set of 1. According to Lemma 3.3 in \cite{Yang}, it follows that there are filters $F_{1},F_{2},\ldots,F_{n}$ in $\mathcal{F}$ such that $1/G=1/\cap_{i=1}^{n}F_{i}$. Therefore, $G=\cap_{i=1}^{n}F_{i}$. Since $(\mathcal{F},\subseteq)$ is a down-directed set, there is a filter $F\in \mathcal{F}$ such that $F\subseteq \cap_{i=1}^{n}F_{i}=G$. Therefore, we have $G=F\in \mathcal{F}$ and $G$ is a minimum filter of $\mathcal{F}$.
\end{proof}

\begin{lemma}\label{7.19}
Let $(L,\mathcal{U})$ be a topological residuated lattice. If $F$ is a filter of $L$, then $F$ is open(closed) if and only if $a/F$ is open(closed) for any $a\in L$.
\end{lemma}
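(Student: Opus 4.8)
The plan is to realise every coset $a/F$ as the preimage of $F$ under a continuous self-map of $L$, and then to observe that $F$ itself is one of these cosets. Recall that $a/F$ is the class of $a$ under the congruence $\theta_{F}$ determined by $F$, so that $x\in a/F$ precisely when $(x\rightarrow a)\odot(a\rightarrow x)\in F$. This suggests defining, for each fixed $a\in L$, the map $\lambda_{a}:L\rightarrow L$ by $\lambda_{a}(x)=(x\rightarrow a)\odot(a\rightarrow x)$, so that $a/F=\lambda_{a}^{-1}(F)$.

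First I would verify that $\lambda_{a}$ is continuous. Since $(L,\mathcal{U})$ is a topological residuated lattice, the operations $\rightarrow$ and $\odot$ are continuous as maps $L\times L\rightarrow L$. The maps $x\mapsto x\rightarrow a$ and $x\mapsto a\rightarrow x$ are then continuous, being the composites of the continuous insertions $x\mapsto(x,a)$ and $x\mapsto(a,x)$ with $\rightarrow$; hence $x\mapsto\bigl((x\rightarrow a),(a\rightarrow x)\bigr)$ is continuous into $L\times L$, and composing with $\odot$ shows that $\lambda_{a}$ is continuous.

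With continuity in hand the forward implications are immediate: if $F$ is open (respectively closed), then for every $a\in L$ the set $a/F=\lambda_{a}^{-1}(F)$ is open (respectively closed), since preimages of open (closed) sets under continuous maps are open (closed). For the converse I would use the identity $1/F=F$, which holds because $x\in 1/F$ iff $(x\rightarrow 1)\odot(1\rightarrow x)=x\in F$. Thus if $a/F$ is open (closed) for every $a\in L$, then taking $a=1$ shows that $F=1/F$ is open (closed).

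There is no serious obstacle here; the only points that require care are the verification that $\lambda_{a}$ is continuous and that $a/F$ coincides with $\lambda_{a}^{-1}(F)$, both of which follow directly from the definition of $\theta_{F}$ and the continuity of the residuated lattice operations. The commutativity of $\odot$ is used tacitly to identify $(a\rightarrow x)\odot(x\rightarrow a)$ with $(x\rightarrow a)\odot(a\rightarrow x)$.
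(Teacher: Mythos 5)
Your proof is correct and takes essentially the same route as the paper's: the paper realises $a/F$ as $l_{a}^{-1}(F)\cap g_{a}^{-1}(F)$ for the two continuous maps $l_{a}(x)=a\rightarrow x$ and $g_{a}(x)=x\rightarrow a$, whereas you merely bundle these into the single continuous map $\lambda_{a}(x)=(x\rightarrow a)\odot(a\rightarrow x)$ with $a/F=\lambda_{a}^{-1}(F)$ --- an equivalent reformulation, since for a filter $F$ one has $(x\rightarrow a)\odot(a\rightarrow x)\in F$ iff both $x\rightarrow a\in F$ and $a\rightarrow x\in F$. The converse direction via the identity $F=1/F$ is exactly the paper's argument as well.
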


\begin{proof}
There is no loss of generality in assuming that $F$ is an open filter of $L$. Let $a\in L$. We define two maps $l_{a}:L\rightarrow L$ and $g_{a}:L\rightarrow L$ by $l_{a}(x)=a\rightarrow x$ and $g_{a}(x)=x\rightarrow a$
for any $x\in L$. Then clearly, $l_{a}$ and $g_{a}$ are continuous maps and so $l_{a}^{-1}(F)$, $g_{a}^{-1}(F)$ are open in $(L,\mathcal{U})$. Since $a/F=\{x\in L:x\rightarrow a\in F ~and~a\rightarrow x\in F\}=l_{a}^{-1}(F)\cap g_{a}^{-1}(F)$, then we have $a/F\in\mathcal{U}$. Conversely, it follows the fact $F=1/F$.

\end{proof}

We state next an easy consequence of compactness which is similar to topological groups \cite{Ribes}.

\begin{theorem}\label{7.20}
In a compact zero-dimensional linear topological residuated lattice $(L,\mathcal{U})$, a filter $G$ is open if and only if $G$ is closed of finite index.
\end{theorem}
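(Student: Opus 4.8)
The plan is to reduce everything to Lemma~\ref{7.19}, which tells us that a filter $F$ is open (respectively closed) if and only if every coset $a/F$ is open (respectively closed). The key observation is that the cosets $\{a/G : a\in L\}$ are exactly the equivalence classes of the congruence $\theta_{G}$ determined by $G$, so they form a partition of $L$ into pairwise disjoint nonempty sets, with $G=1/G$ being one of them.

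For the ``only if'' direction, suppose $G$ is open. By Lemma~\ref{7.19}, every coset $a/G$ is open, so $\{a/G:a\in L\}$ is a pairwise disjoint open cover of $L$. Since $(L,\mathcal{U})$ is compact, finitely many of these cosets already cover $L$; because the cosets are pairwise disjoint and nonempty, no coset can lie outside this finite subfamily (such a coset would be disjoint from a set equal to all of $L$), and hence $G$ has finite index. Writing $L\setminus G$ as the finite union of those open cosets $a/G$ with $a/G\neq G$ shows that $L\setminus G$ is open, so $G$ is closed. Thus $G$ is a closed filter of finite index.

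For the ``if'' direction, suppose $G$ is closed of finite index, say with cosets $G=1/G,\,a_{2}/G,\ldots,a_{n}/G$. By Lemma~\ref{7.19}, each coset is closed. Then $L\setminus G=a_{2}/G\cup\cdots\cup a_{n}/G$ is a finite union of closed sets and therefore closed, so $G=1/G$ is open. I note that this direction uses only the finite index together with Lemma~\ref{7.19}, not compactness or zero-dimensionality.

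I expect the only real subtlety to be the compactness step in the ``only if'' direction: one must use the disjointness of the cosets to upgrade ``a finite subfamily covers $L$'' into ``there are only finitely many cosets.'' Everything else follows immediately once the coset-wise reformulation of openness and closedness supplied by Lemma~\ref{7.19} is in place.
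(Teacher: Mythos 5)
Your proof is correct, and it is essentially the paper's argument with one genuine streamlining. Both directions rest on the same two ingredients as the paper: Lemma~\ref{7.19} (a filter is open/closed iff all its cosets $a/G$ are) and the fact that the cosets partition $L$; your converse (finite index $+$ closed cosets $\Rightarrow$ the complement $L\setminus G$ is a finite union of closed sets, so $G$ is open) coincides with the paper's. The difference is in the forward direction: the paper first uses the linear base to pick a basic open filter $F\in X_{\mathcal{U}}$ with $F\subseteq G$, extracts a finite subcover from the cosets of $F$, and then coarsens via $x_{i}/F\subseteq x_{i}/G$ to cosets of $G$; you apply compactness \emph{directly} to the open cover by cosets of $G$, which is simpler and makes the detour through $F$ unnecessary. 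As you correctly note, your argument never invokes zero-dimensionality or linearity at all, so it in fact proves the statement for an arbitrary compact topological residuated lattice, a mild strengthening of the theorem as stated. Your explicit handling of the step ``finite subcover $\Rightarrow$ finitely many cosets'' via pairwise disjointness is sound (a coset missing from the subfamily would be disjoint from a union equal to $L$, hence empty), and this point is also implicit, though unremarked, in the paper's proof.
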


\begin{proof}
By hypothesis, let $X_{\mathcal{U}}=\{F: 1\in F\in \mathcal{U}\}$. Suppose that $G$ is an open filter of $L$, then there exists a filter $F$ in $X_{\mathcal{U}}$ such that $F\subseteq G$. Let $L=\bigcup_{x\in L}x/F$.
Since $(L,\mathcal{U})$ is compact, there are $x_{1},\ldots,x_{n}\in L$ such that $L=\bigcup_{i=1}^{n}x_{i}/F$. Due to $F\subseteq G$, it follows that $x_{i}/F\subseteq x_{i}/G$. Thus we have $L=\bigcup_{i=1}^{n}x_{i}/G$.
Without loss of generality we can assume $x_{1}/G=1/G=G$, hence we have $G=L\setminus\bigcup_{i=2}^{n}x_{i}/G$. By Lemma \ref{7.19}, each $x_{i}/G$ is open, so $G$ is closed of finite index. Conversely, suppose $G$ is a
closed filter, then we can let $L=G\cup\bigcup_{i=1}^{n-1}x_{1}/G$, by Lemma \ref{7.19}, $G=L\setminus\bigcup_{i=1}^{n-1}x_{1}/G$ is open. This proves the theorem.
\end{proof}

\begin{proposition}\label{7.21}
Let $\mathcal{F}(L)\setminus\{1\}$ be a global system of filters of a residuated lattice $L$. If $(L,\mathcal{T}_{\mathcal{F}(L)\setminus\{1\}})$ is compact and Hausdorff, then $L$ is finitely approximable.
\end{proposition}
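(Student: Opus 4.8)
The plan is to reduce finite approximability to the criterion, recorded earlier in the paper, that a residuated lattice $L$ is finitely approximable if and only if $\bigcap\mathcal{F}(L)_{fin}=\{1\}$, where $\mathcal{F}(L)_{fin}$ denotes the set of all finite index filters of $L$. Thus it suffices to show that the finite index filters separate every element from $1$, i.e. that their intersection is trivial.

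The key observation is that, in the topology $\mathcal{T}_{\mathcal{F}(L)\setminus\{1\}}$, every nontrivial filter is open. Indeed, for any filter $G\neq\{1\}$ one has
$$1/G=\{y\in L:(1\rightarrow y)\odot(y\rightarrow 1)\in G\}=\{y\in L:y\in G\}=G$$
by $(R_{1})$, so $G$ coincides with the basic neighborhood $1/G$ of $1$ determined by the member $G$ of the system $\mathcal{F}(L)\setminus\{1\}$. Hence $G\in\mathcal{T}_{\mathcal{F}(L)\setminus\{1\}}$. This is precisely the feature that makes the system $\mathcal{F}(L)\setminus\{1\}$ special: being the collection of \emph{all} nontrivial filters, each of its members is itself a basic open set at $1$.

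Since $\mathcal{T}_{\mathcal{F}(L)\setminus\{1\}}$ is a zero-dimensional linear topology (being induced by a system of filters) and is compact by hypothesis, I would then invoke Theorem \ref{7.20}: in a compact zero-dimensional linear topological residuated lattice an open filter is precisely a closed filter of finite index. Applying this to each nontrivial filter $G$, which is open by the previous step, yields that every nontrivial filter of $L$ has finite index; equivalently $\mathcal{F}(L)\setminus\{1\}\subseteq\mathcal{F}(L)_{fin}$.

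Finally, because $(L,\mathcal{T}_{\mathcal{F}(L)\setminus\{1\}})$ is Hausdorff, Theorem \ref{4.7} gives $\bigcap_{F\in\mathcal{F}(L)\setminus\{1\}}F=\{1\}$. Combining the two facts,
$$\{1\}\subseteq\bigcap\mathcal{F}(L)_{fin}\subseteq\bigcap_{F\in\mathcal{F}(L)\setminus\{1\}}F=\{1\},$$
so $\bigcap\mathcal{F}(L)_{fin}=\{1\}$ and $L$ is finitely approximable. The only genuinely delicate point is the first step: one must notice that the system $\mathcal{F}(L)\setminus\{1\}$ is large enough that each of its members is a basic open neighborhood of $1$, which is exactly what lets compactness (through Theorem \ref{7.20}) upgrade ``open filter'' to ``finite index filter''; the remaining inclusions are routine.
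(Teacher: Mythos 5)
Your proof is correct and takes essentially the same route as the paper, whose entire proof is the citation ``It follows from Theorem \ref{4.7} and Theorem \ref{7.20}'': you simply make the intended details explicit, namely that each nontrivial filter $G=1/G$ is a basic open set and hence, by compactness and Theorem \ref{7.20}, a filter of finite index, while Hausdorffness and Theorem \ref{4.7} give $\bigcap\bigl(\mathcal{F}(L)\setminus\{1\}\bigr)=\{1\}$, so that $\bigcap\mathcal{F}(L)_{fin}=\{1\}$ and the finite-approximability criterion applies. No gaps.
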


\begin{proof}
It follows from Theorem \ref{4.7} and Theorem \ref{7.20}.
\end{proof}

\begin{theorem}\label{7.22}
Let $L$ be a non-trivial residuated lattice satisfying the descending chain condition. Then $L$ is subdirectly irreducible if and only if it has a largest non-trivial zero-dimensional linear topological residuated lattice.
\end{theorem}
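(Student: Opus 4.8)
The plan is to convert this topological equivalence into a purely order-theoretic statement about the filter lattice and then apply the standard monolith characterization of subdirect irreducibility. The decisive bridge is the forward direction of Theorem \ref{7.16}: because $L$ satisfies $DCC$, \emph{every} zero-dimensional linear topological residuated lattice on $L$ is simple, i.e.\ equals $\mathcal{T}_{F}$ for a filter $F$ that is unique by injectivity of $F\mapsto \mathcal{T}_{F}$. Recalling that $1/F=F$ is the minimal open neighbourhood of $1$ in the Alexandrov topology $\mathcal{T}_{F}$, and that $\mathcal{T}_{F}$ is discrete precisely when $F=\{1\}$, the assignment $F\mapsto\mathcal{T}_{F}$ restricts to a bijection between $\mathcal{F}(L)\setminus\{1\}$ and the non-trivial (i.e.\ non-discrete) zero-dimensional linear topologies on $L$.

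First I would check that this bijection is order-reversing. If $F\subseteq G$ then $\theta_{F}\subseteq\theta_{G}$, so $x/F\subseteq x/G$ for every $x\in L$, whence $\mathcal{T}_{G}\subseteq\mathcal{T}_{F}$. Conversely, if $\mathcal{T}_{F}\subseteq\mathcal{T}_{G}$, then $F=1/F$ is $\mathcal{T}_{G}$-open and contains $1$, so by minimality of $G=1/G$ among $\mathcal{T}_{G}$-open sets containing $1$ we obtain $G\subseteq F$. Thus $F\mapsto\mathcal{T}_{F}$ is an anti-isomorphism of posets, and consequently a \emph{largest} non-trivial zero-dimensional linear topological residuated lattice exists if and only if $(\mathcal{F}(L)\setminus\{1\},\subseteq)$ possesses a minimum element $F_{0}$; in that case the largest topology is exactly $\mathcal{T}_{F_{0}}$, since $F_{0}\subseteq F$ forces $\mathcal{T}_{F}\subseteq\mathcal{T}_{F_{0}}$ for every non-trivial $\mathcal{T}_{F}$.

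It then remains to identify this minimum filter with subdirect irreducibility. Using the isomorphism $\mathcal{F}(L)\cong Con(L)$ of Theorem \ref{4.1}, under which the trivial filter $\{1\}$ corresponds to the diagonal congruence $\Delta$, a minimum element of $\mathcal{F}(L)\setminus\{1\}$ is precisely a minimum element of $Con(L)-\Delta$, i.e.\ a monolith. By the characterization of subdirect irreducibility (Theorem 8.4 in \cite{Burris}), the non-trivial residuated lattice $L$ is subdirectly irreducible exactly when such a monolith exists. Chaining the two equivalences gives the theorem: $L$ is subdirectly irreducible $\iff$ $\mathcal{F}(L)\setminus\{1\}$ has a least element $\iff$ $L$ admits a largest non-trivial zero-dimensional linear topological residuated lattice.

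The step I expect to be the main obstacle is making the reduction to simple topologies fully rigorous, rather than the order theory. One must genuinely invoke $DCC$ (via Proposition \ref{4.4} together with the surjectivity argument in the proof of Theorem \ref{7.16}) to ensure that \emph{every} non-discrete zero-dimensional linear topology, not just the simple ones, is of the form $\mathcal{T}_{F}$; without this, a non-simple topology could fail to be comparable with $\mathcal{T}_{F_{0}}$ and the word ``largest'' would no longer correspond to a least filter. Once surjectivity is secured, the anti-isomorphism identifies the largest non-trivial topology with the monolith filter immediately, and the proof closes.
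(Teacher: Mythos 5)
Your proposal is correct and takes essentially the same route as the paper: the paper's own proof is a one-line citation of Theorem \ref{7.16} together with Theorem 8.4 of \cite{Burris}, which is exactly your chain of equivalences. You simply make explicit the details the paper leaves implicit, namely that under $DCC$ the assignment $F\mapsto\mathcal{T}_{F}$ is an order-reversing bijection onto the zero-dimensional linear topologies (discrete exactly when $F=\{1\}$), so a largest non-discrete topology corresponds to a minimum element of $\mathcal{F}(L)\setminus\{1\}$, i.e.\ to the monolith under $\mathcal{F}(L)\cong Con(L)$.
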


\begin{proof}
It follows from Theorem \ref{7.16} in the paper and Theorem 8.4 in \cite{Burris}.
\end{proof}

By a chain of prime filters of a residuated lattice $L$ we mean a finite strictly increasing sequence $P_{0}\subset P_{1}\subset\cdots\subset P_{n}$, the \emph{length} of the chain is $n$.
\begin{definition}\label{7.23}
\emph{We define the \emph{dimension} of a residuated lattice $L$ to be the supremum of the lengths of all chains of prime filters in $L$: it is an inter $n\geq 0$, or $+\infty$(assuming $L$ is nontrivial). }
\end{definition}

Noth that such a dimension also was called the \emph{Krull dimension} in commutative rings (see for instance \cite{Atiyah}).
\begin{proposition}\label{7.24}
If a residuated lattice $L$ is subdirectly irreducible, then it does not have a non-trivial Hausdorff zero-dimensional linear topological residuated lattice.
\end{proposition}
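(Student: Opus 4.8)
The plan is to reduce Hausdorffness to a filter-intersection condition via Theorem~\ref{4.7} and then to obstruct that condition using the monolith filter supplied by subdirect irreducibility. First I would record the filter-theoretic meaning of the hypothesis: by the lattice isomorphism $\mathcal{F}(L)\cong Con(L)$ of Theorem~\ref{4.1}, subdirect irreducibility of $L$ (a minimum congruence in $Con(L)-\Delta$) is equivalent to the existence of a minimum filter $F_{0}$ in $\mathcal{F}(L)\setminus\{1\}$. Thus $F_{0}\neq\{1\}$ and $F_{0}\subseteq F$ for every filter $F$ of $L$ with $F\neq\{1\}$. (If $L$ is the one-element lattice the statement is vacuous, so I assume $L$ is non-trivial.)

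Next I would argue by contradiction. Suppose $(L,\mathcal{U})$ is a non-trivial Hausdorff zero-dimensional linear topological residuated lattice. By Theorem~\ref{7.12} together with Corollary~\ref{7.15}, $\mathcal{U}$ is induced by its own system of filters $X_{\mathcal{U}}=\{F:1\in F\in\mathcal{U}\}$, so that $\mathcal{U}=\mathcal{T}_{X_{\mathcal{U}}}$. Since $\mathcal{U}$ is non-trivial (non-discrete), and since a system of filters induces the discrete topology exactly when it contains $\{1\}$, we must have $\{1\}\notin X_{\mathcal{U}}$; hence every $F\in X_{\mathcal{U}}$ belongs to $\mathcal{F}(L)\setminus\{1\}$.

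The decisive step is to invoke minimality of $F_{0}$. Because each $F\in X_{\mathcal{U}}$ is a nontrivial filter, $F_{0}\subseteq F$, and therefore
\[
F_{0}\subseteq\bigcap_{F\in X_{\mathcal{U}}}F .
\]
As $F_{0}\neq\{1\}$, this forces $\bigcap_{F\in X_{\mathcal{U}}}F\neq\{1\}$. But the equivalence (i)$\Leftrightarrow$(iv) of Theorem~\ref{4.7}, applied to the system $X_{\mathcal{U}}$, says that $(L,\mathcal{T}_{X_{\mathcal{U}}})=(L,\mathcal{U})$ is Hausdorff precisely when $\bigcap_{F\in X_{\mathcal{U}}}F=\{1\}$. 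This contradicts the assumption that $\mathcal{U}$ is Hausdorff, so no non-trivial Hausdorff zero-dimensional linear topological residuated lattice on $L$ can exist.

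I expect the only genuinely delicate point to be the passage from non-discreteness to $\{1\}\notin X_{\mathcal{U}}$, that is, confirming that all the defining open filters of such a topology are strictly larger than $\{1\}$; everything downstream (the containment $F_{0}\subseteq\bigcap_{F\in X_{\mathcal{U}}}F$ and the Hausdorff criterion) is then immediate. The remainder is a routine application of the representation of zero-dimensional linear topologies by their systems of filters.
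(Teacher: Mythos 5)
Your proof is correct and takes essentially the same route as the paper's: both represent the zero-dimensional linear topology by its system of open filters via Corollary~\ref{7.15}, invoke the Hausdorff criterion $\bigcap X_{\mathcal{U}}=\{1\}$ from Theorem~\ref{4.7}, and use subdirect irreducibility to obstruct that intersection condition. The only difference is presentational: the paper argues directly that $\{1\}$ is completely meet-irreducible in $(\mathcal{F}(L),\subseteq)$, so Hausdorffness forces $\{1\}\in X_{\mathcal{U}}$ and hence discreteness, whereas you run the equivalent contrapositive through the monolith filter $F_{0}\subseteq\bigcap X_{\mathcal{U}}$.
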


\begin{proof}
Let $L$ be a subdirectly irreducible residuated lattice. Suppose that $(L,\mathcal{U})$ is a Hausdorff zero-dimensional linear topological residuated lattice. By Corollary \ref{7.15}, there is a system of filters $(\mathcal{F},\subseteq)$ such that $\mathcal{U}=\sup \{\mathcal{T}_{F}:F\in X_{\mathcal{U}} \}$. According to Theorem \ref{4.7} and $\mathcal{U}$ is Hausdorff, we have $\bigcap\mathcal{F}=\{1\}$. Since $\{1\}$ is completely meet-irreducible in $(\mathcal{F}(L),\subseteq)$, it follows that $\{1\}\in \mathcal{F}$. Therefore, $\mathcal{U}$ is a discrete topology.
\end{proof}

\begin{theorem}\label{7.25}
If a residuated lattice $L$ is not subdirectly irreducible and its dimension is infinite, then there must be a non-trivial Hausdorff topological residuated lattice on $L$.
\end{theorem}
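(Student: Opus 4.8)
The plan is to produce a \emph{system of filters} $\mathcal{F}$ of $L$ consisting entirely of non-trivial filters (so $\{1\}\notin\mathcal{F}$) whose intersection is $\bigcap\mathcal{F}=\{1\}$. Once this is in hand, the induced linear topology $\mathcal{T}_{\mathcal{F}}$ makes $(L,\mathcal{T}_{\mathcal{F}})$ a topological residuated lattice which, by Theorem \ref{4.7}, is Hausdorff precisely because $\bigcap\mathcal{F}=\{1\}$; and it is non-discrete because, as recorded after Corollary \ref{4.10}, $(L,\mathcal{T}_{\mathcal{F}})$ is discrete if and only if $\{1\}\in\mathcal{F}$. Thus the whole theorem reduces to the combinatorial statement that, under the two hypotheses, there is a down-directed family of non-trivial filters with trivial intersection.

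First I would isolate what each hypothesis contributes. Since $L$ is not subdirectly irreducible, the characterisation recalled earlier (a minimum congruence in $Con(L)-\Delta$, equivalently a minimum filter in $\mathcal{F}(L)\setminus\{1\}$) shows that $\mathcal{F}(L)\setminus\{1\}$ has no least element; because an arbitrary intersection of filters is again a filter, this forces $\bigcap(\mathcal{F}(L)\setminus\{1\})=\{1\}$, so the non-trivial filters already separate $1$ from every other point (this is also consistent with Corollary \ref{4.10}, $\bigcap\mathcal{P}(L)=\{1\}$). What is still missing is only down-directedness together with the avoidance of $\{1\}$. If in addition $L$ were directly indecomposable, Proposition \ref{4.11} would say that $\mathcal{F}(L)\setminus\{1\}$ is itself a global system of filters, and taking $\mathcal{F}=\mathcal{F}(L)\setminus\{1\}$ would complete the proof at once, with no appeal to dimension.

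The substance therefore lies in the directly decomposable situation, and this is exactly where the infinite dimension enters. The obstruction to be avoided is exhibited by a finite product of subdirectly irreducible residuated lattices: there every down-directed family of filters is finite, hence has a least element equal to its own intersection, so that the only Hausdorff topology produced in this way is the discrete one; and such a product has \emph{finite} dimension. Accordingly I would use Definition \ref{7.23}: as the dimension of $L$ is infinite there are chains of prime filters of unbounded length, from which I would extract an infinite strictly descending chain of prime filters $P_{1}\supsetneq P_{2}\supsetneq\cdots$. Being totally ordered it is trivially down-directed, its members are all non-trivial, and $\{P_{n}:n\geq 1\}$ is then a system of filters not containing $\{1\}$.

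The main obstacle is to guarantee that this descending chain intersects down to exactly $\{1\}$. Writing $P_{\infty}=\bigcap_{n}P_{n}$ (a filter, and plausibly prime since $(\mathcal{F}(L),\subseteq)$ is distributive, as noted in the proof of Proposition \ref{5.6}), two cases arise. If $P_{\infty}=\{1\}$ we are done. If $P_{\infty}\neq\{1\}$, I would invoke the absence of a monolith: since $\bigcap(\mathcal{F}(L)\setminus\{1\})=\{1\}\subsetneq P_{\infty}$, there is a non-trivial filter $Q$ with $Q\cap P_{\infty}\subsetneq P_{\infty}$, and I would replace each $P_{n}$ by $P_{n}\cap Q$ and iterate, driving the limit of the chain toward $\{1\}$ while keeping every member non-trivial, using Corollary \ref{4.9} (every proper filter is an intersection of prime filters) to remain inside the prime spectrum. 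Making this iteration genuinely converge --- that is, converting ``arbitrarily long prime chains'' plus ``no monolith'' into a single infinite descending chain of non-trivial filters with intersection $\{1\}$ --- is the delicate step; once such a chain is secured, Theorem \ref{4.7} furnishes the desired non-trivial Hausdorff topological residuated lattice on $L$.
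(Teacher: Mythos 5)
Your overall route is the same as the paper's. The paper likewise first deduces from non-subdirect-irreducibility that $\bigcap(\mathcal{F}(L)\setminus\{1\})=\{1\}$ (arguing, exactly as you do, that otherwise this intersection would be a minimum element of $\mathcal{F}(L)\setminus\{1\}$, contradicting the hypothesis, via Corollaries \ref{4.9} and \ref{4.10}); it then uses the infinite dimension to produce a chain $\mathcal{P}$ of non-trivial prime filters, notes that a chain is trivially a system of filters, obtains Hausdorffness of $\mathcal{T}_{\mathcal{P}}$ from Theorem \ref{4.7} once $\bigcap\mathcal{P}=\{1\}$, and gets non-discreteness because every finite intersection of members of a chain is itself a member, hence not $\{1\}$. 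So your reduction, your use of both hypotheses, and your choice of a descending chain of primes all match the paper. The one point of divergence is decisive, however: the paper simply \emph{asserts} that infinite dimension yields a chain $\mathcal{P}\subseteq\mathcal{P}(L)\setminus\{1\}$ with $\bigcap\mathcal{P}=\bigcap(\mathcal{P}(L)\setminus\{1\})=\{1\}$, whereas you correctly identify this as the crux of the whole argument --- and then leave it unproved.

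That is a genuine gap, and your sketched repair does not close it. First, primality is irrelevant to the conclusion (any down-directed family of non-trivial filters with trivial intersection suffices for Theorem \ref{4.7}), so the appeal to Corollary \ref{4.9} ``to remain inside the prime spectrum'' buys nothing. Second, and more seriously, since $L$ may be directly decomposable, Proposition \ref{4.11} says precisely that finite intersections of non-trivial filters \emph{can} equal $\{1\}$; nothing in the iteration ``replace $P_{n}$ by $P_{n}\cap Q$'' prevents some $P_{n}\cap Q$ from collapsing to $\{1\}$, which would put $\{1\}$ in your family and make the topology discrete. Third, there is no limit or termination argument showing the iteration ever drives the intersection down to exactly $\{1\}$ while preserving down-directedness. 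Indeed, chains of primes alone cannot work in general: if $L\cong A\times B$ with both factors non-trivial, every prime filter has the form $P\times B$ or $A\times Q$, so every chain of primes has intersection containing $\{1\}\times B$ or $A\times\{1\}$, and one is forced to pass to a genuinely non-chain down-directed family such as $\{P_{i}\times Q_{j}\}$. So your instinct that this is ``the delicate step'' is sound --- it is the same step the paper glosses over with a bare assertion --- but as written your proposal proves the theorem only modulo that step, which is where all the content lies.
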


\begin{proof}

Assume that $L$ is not subdirectly irreducible and its dimension is infinite. According to Corollary \ref{4.10}, we have $\bigcap\mathcal{P}(L)=\{1\}$. Since $L$ is not subdirectly irreducible, we must have $$\bigcap(\mathcal{P}(L)\setminus\{1\})=\bigcap(\mathcal{F}(L)\setminus\{1\})=\{1\},$$ otherwise it follows that $\{1\}\subset F=\bigcap(\mathcal{F}(L)\setminus\{1\})$, by Proposition \ref{4.9}, $F$ is a minimum filter in $\mathcal{F}(L)\setminus\{1\}$, which is a contradiction. Since $L$ is an infinite dimension, there is a strictly increasing sequence of $\mathcal{P}(L)\setminus\{1\}$, denote it by $\mathcal{P}$, such that $$\bigcap\mathcal{P}=\bigcap(\mathcal{P}(L)\setminus\{1\})=\{1\}.$$ It is easily checked that $(\mathcal{P},\subseteq)$ is a system of filters, thus $(L,\mathcal{T}_{\mathcal{P}})$ is a topological residuated lattice. According to Proposition \ref{4.7}, $\mathcal{T}_{\mathcal{P}}$ is Hausdorff, which is not-trivial, since the intersection of any finite of elements of $\mathcal{P}$ is not equal to $\{1\}$.

\end{proof}

\begin{example}\label{7.26}
In Example \ref{4.5}, $(\mathcal{F},\subseteq)$ is a system of filters. Since the G\"{o}del structure $\mathcal{I}=(I,min,max,\odot,\rightarrow,0,1)$ is a linear residuated lattice, it follows that each element of
$\mathcal{F}$ is a prime filter, and then $\mathcal{I}$ is an infinite dimension residuated lattice. It is well known that $\mathcal{I}$ is not subdirectly irreducible(e.g.,see \cite{Esakia} Theorem 2.4.11). Thus, by Theorem
\ref{7.25}, we have $\mathcal{T}_{\mathcal{F}}$ is a non-trivial Hausdorff zero-dimensional linear topological residuated lattice. Routine calculation shows that for any element $(a,1]$ in $\mathcal{F}$, we have
$x/(a,1]=\{x\}$ if $x\leq a$, and $x/(a,1]=(a,x]$ if $x>a$, that is, the topology is $$\mathcal{T}_{(a,1]}=\{\{x\}:x\leq a\}\cup (a,1].$$ Since $\mathcal{T}_{\mathcal{F}}=\sup\{\mathcal{T}_{(a,1]}:a\in [0,1)\}$, it follows
that the Sorgenfry topology on $I$ is coarse than the topology $\mathcal{T}_{\mathcal{F}}$.
\end{example}


\section{Conclusions and future research topics}
In this paper, we concluded that the equipotent of a profinite residuated lattice is either finite or uncountable which used to judge whether the topological residuated lattice is profinite with set-theoretical methods. Giving a profinite algebra $A$, whether there is an algebra $B$ such that
$A\cong \widehat{B}$. In lattice theory, this problem is related to Gr\"{a}tzer's clebrated problem of representable posets, see \cite{Gratzer} Problems 34-35, P 156. In this paper, we concluded that
$\mathbf{Pro}\mathcal{RL}\subseteq \mathbf{Ap}\mathcal{RL}\cap \mathbf{C}\mathcal{RL}$, where $\mathbf{C}\mathcal{RL}$ is the class of complete residuated lattices. Likewise, the results for Heyting
algebras \cite{Bezhanishvili2},
Gr\"{a}tzer's problem of representability for residuated lattices would be our further research topics.
It is well known that
$\mathbf{Pro}\mathcal{RL}\subseteq \mathbf{Sto}\mathcal{RL}$, consisting of these Stone topological residuated lattices, in other words,
the category $\mathbf{Pro}\mathcal{RL}$ is equivalent to a full subcategory of $\mathbf{Sto}\mathcal{RL}$. The question arises by P.T. Jonstone (see \cite{Johnstone} Sec. VI 2.6): given an algebra $A$, when can we say that
every Stone topological algebra is profinite? This question is also true for a wide class of algebras, including groups, rings, bounded distributive lattices, Heyting algebras and Boolean algebras.
We would study this question for residuated lattices.\\

\medskip
\noindent\textbf{Acknowledgments}
\medskip\\
\indent This research is supported by a grant of National Natural Science Foundation of China (12171294,11901371,12001423), Postdoctoral Science Foundation of China (2019M660054XB).


\end{document}